%% file: Main1_Inexact.tex
\documentclass{article}
\usepackage{graphicx} 
\usepackage{amsmath,amssymb,amsthm}
\usepackage{fullpage}

\usepackage{xcolor}
\usepackage{algorithm}
\usepackage{booktabs}
\usepackage{algorithmic}
\usepackage{hyperref}
\usepackage{soul}
\usepackage{float}
\makeatletter
\def\BState{\State\hskip-\ALG@thistlm}
\makeatother
 
\DeclareMathOperator{\KL}{KL}
\DeclareMathOperator{\TV}{TV}

\DeclareMathOperator{\CLS}{\boldsymbol{C}_{LS}}

\DeclareMathOperator{\I}{I}

\DeclareMathOperator{\RRT}{RRT}

\usepackage{comment}

\title{Langevin for Nonconvex Optimization:\\
Exact, Inexact and Zeroth-Order}
\author{Emanuele Naldi\thanks{MaLGa - DIMA, University of Genova, Genova, IT} 
        \and 
        Marco Rando\thanks{Universit\'e C\^ote d'Azur, INRIA, CNRS, LJAD, Nice, FR}
        \and 
        Lorenzo Rosasco\thanks{MaLGa - DIBRIS, University of Genova, Genova, IT; Istituto Italiano di Tecnologia, Genova, IT and CBMM - MIT, Cambridge, MA, USA}
        \and 
        Silvia Villa\footnotemark[1]
        }

\date{}

\input{MacroBook}

\begin{document}

\maketitle

\begin{abstract}
We study Langevin-based methods for non-convex optimization under smoothness and dissipativity assumptions. Our focus is on obtaining non-asymptotic bounds for the expected excess risk rather than sampling guarantees for the full target distribution. The key ingredient of our analysis is a direct passage from relative entropy to objective-value error, based on a weighted Csiszár--Kullback--Pinsker inequality and exponential-moment estimates. This avoids intermediate Wasserstein bounds and yields sharper dependence on the Log-Sobolev constant, a quantity that may scale exponentially with the inverse temperature and the dimension in non-convex problems. We first analyze the Unadjusted Langevin Algorithm with exact gradients and derive explicit bounds on $\mathbb{E}[F(x_k)]-\min F$ in terms of the inverse temperature, dimension, stepsize, smoothness and dissipativity parameters, and the Log-Sobolev constant. We then extend the result to an inexact-gradient version of ULA, allowing for biased and stochastic gradient surrogates whose mean-square error grows at most quadratically in the state. This framework covers stochastic gradients and zeroth-order estimators based only on function evaluations. In particular, we show that both Gaussian and spherical finite-difference estimators fit into the inexact-ULA theory and obtain explicit function-evaluation complexity bounds for zeroth-order Langevin optimization. To the best of our knowledge, these are the first non-asymptotic global non-convex optimization complexity bounds for zeroth-order ULA. We also provide numerical experiments illustrating the behavior of the proposed zeroth-order Langevin schemes.
\end{abstract}


\section{Introduction}

Non-convex optimization is a central problem in modern machine learning, statistics, signal processing, inverse problems, and many other applications. In this setting, classical descent methods may be trapped by local geometry, while global optimization requires mechanisms that can explore the landscape beyond a single basin of attraction. In full generality this task suffers from the curse of dimensionality: for black-box Lipschitz functions on a compact subset of $\mathbb R^d$, worst-case accuracy $\epsilon$ requires a number of function evaluations of order $(L/\epsilon)^d$~\cite{Bull2011,MalherbeVayatis2017}. This motivates the search for more structured assumptions and algorithms, under which different and more informative complexity bounds can be obtained. Classical direct-search and random-search methods provide a general framework for derivative-free problems~\cite{ConnScheinbergVicente2009}. Under suitable regularity assumptions, these methods admit convergence guarantees, typically formulated in terms of stationarity or local optimality~\cite{AudetDennis2006}, or probabilistic convergence~\cite{SolisWets1981}. However, global black-box guarantees remain subject to the curse-of-dimensionality phenomenon described above. Another important family is given by consensus-based optimization and related swarm methods~\cite{PinnauTotzeckTseMartin2017,CarrilloJinLiZhu2021,FornasierKlockRiedl2024}. These methods evolve a population of interacting particles which explore the space stochastically while being attracted toward a consensus point, typically a Laplace-principle weighted average favoring low objective values. Recent theory establishes global convergence and constrained variants under suitable assumptions on the objective, the particle system, and the consensus mechanism~\cite{FornasierKlockRiedl2024,BeddrichChencheneFornasierHuangWohlmuth2026}, and related work connects consensus dynamics with evolution strategies~\cite{FornasierHuangKlemencMalaspina2026}. Other related methods include recent zeroth-order proximal-point schemes, where the standard proximal map is replaced by a Gibbs-weighted local average of function values, thereby favoring regions with small objective values~\cite{NaldiLabarriereMolinariVilla2026}, with convergence guarantees to global minima~\cite{ZhangHanChowOsherSchaeffer2024}, as well as the Proximal Basin Hopping algorithm \cite{lauga2026proximalbasinhoppingglobal}, which combines proximal optimization with local minimization to obtain high-probability convergence to the global minimizer with finite samples.

When first-order information is available, a classical stochastic approach to global non-convex optimization is simulated annealing. Its early theory established asymptotic convergence to global minimizers under logarithmic cooling schedules~\cite{GemanHwang1986,ChiangHwangSheu1987,Hajek1988}. More recent works provide quantitative high-probability bounds for objective-value errors, both in continuous and discrete time~\cite{TangZhou2023,TangWuZhou2024}. These guarantees are genuinely global, but their assumptions often rely on isolated global minima and a detailed metastable description of the energy landscape, involving wells, communication heights, saddle points, and the critical depth $E^\ast$; see also the metastability literature~\cite{BovierEckhoffGayrardKlein2004,BovierDenHollander2015}. This is complementary to the framework pursued in this paper. We do not design a cooling schedule, and we do not assume explicit knowledge of wells, barriers, or critical depths. Instead, we study fixed-temperature Langevin algorithms under smoothness and dissipativity, compressing the global geometry into functional-inequality constants.

Given an objective function $F:\mathbb{R}^d\to\mathbb{R}$ and an inverse temperature $\beta>0$, the Langevin diffusion
\[
    dX_t=-\nabla F(X_t)\,dt+\sqrt{2\beta^{-1}}\,dB_t
\]
combines deterministic descent with isotropic stochastic exploration. Under suitable assumptions, its invariant measure is the Gibbs distribution $\pi_{\beta}$ defined by
\begin{equation}\label{eq:GIBBS}
    d \pi_{\beta}=Z^{-1}e^{-\beta F}\,dx,
    \qquad
    Z=\int_{\mathbb{R}^d}e^{-\beta F(y)}\,dy .
\end{equation}
As $\beta$ increases, $\pi_\beta$ concentrates near the global minimizers of $F$, so approximate sampling from $\pi_\beta$ can be used as a global optimization principle. In contrast with simulated annealing, where the temperature is decreased over time, we consider fixed-temperature Langevin algorithms and choose $\beta$ large enough to make the Gibbs bias compatible with the target optimization accuracy.

The algorithmic object we study is the Unadjusted Langevin Algorithm (ULA),
\[
    x_{k+1}
    =
    x_k-\gamma\nabla F(x_k)
    +
    \sqrt{2\gamma\beta^{-1}}\,z_k,
    \qquad
    z_k\sim\mathcal N(0,I_d),
\]
together with its inexact, stochastic, and zeroth-order variants. The non-asymptotic analysis of Langevin methods for non-convex optimization was systematically developed by Raginsky, Rakhlin and Telgarsky~\cite{Raginsky2017}, who obtained guarantees under smoothness and dissipativity by combining sampling estimates with Gibbs concentration bounds. Subsequent works refined this program in several directions: direct discrete-time analyses for Langevin-based non-convex optimization~\cite{XCZG2018}, Cheeger-type and total-variation approaches for non-log-concave sampling~\cite{Zou2021Faster}, variance-reduced Langevin dynamics~\cite{KinoshitaSuzuki2022}, and Lyapunov-potential methods yielding high-probability hitting-time guarantees~\cite{Chen2024}. Closely related sampling results analyze ULA through functional inequalities, isoperimetry, or tail-growth conditions~\cite{VempalaWibisono2019,ChewiErdogduLiShenZhang2022,ErdogduHosseinzadeh2021}. Other non-log-concave sampling analyses obtain convergence rates under stronger geometric assumptions, such as strong convexity or contractivity outside a compact set~\cite{CCABJ2019,majka2018nonasymptotic}, while further works study weakly smooth regimes, stationarity guarantees, or nonsmooth extensions~\cite{BCESZ2022,NguyenDangChen2023,Johnston2025}. Classical and log-concave ULA analyses, including inaccurate-gradient variants, provide important background~\cite{RobertsTweedie1996,Dalalyan2017,DurmusMoulines2017,Dalalyan2019}. Zeroth-order Langevin methods have also been studied mainly from a sampling perspective~\cite{Roy2022}. 

The quantitative analysis is delicate because of the competition between optimization bias and mixing. Taking $\beta$ large improves the Gibbs concentration around global minimizers, but it also makes the dynamics less noisy and may dramatically slow convergence to equilibrium. In non-convex landscapes, this difficulty is encoded in functional inequality constants such as the logarithmic Sobolev constant $C_{\mathrm{LS}}(\beta,d)$, which may depend exponentially on both $\beta$ and the dimension. Most existing Langevin optimization guarantees are derived from sampling guarantees: one first bounds a distance between the law of the algorithm and the Gibbs measure, typically in Wasserstein distance, total variation, or KL divergence, and then converts this distributional estimate into an objective-value bound. While natural, this route may lose important factors when the final goal is optimization rather than sampling. The main point of this paper is to control directly
\[
    \mathbb E[F(x_k)]-\min F
\]
under smoothness and dissipativity assumptions. We exploit the quadratic growth implied by these assumptions through a weighted Csisz\'ar--Kullback--Pinsker inequality, together with exponential-moment estimates for the involved measure, to pass directly from KL divergence to objective-value error. This avoids an intermediate Wasserstein step and yields sharper dependence on the dominant geometric quantity $C_{\mathrm{LS}}(\beta,d)$.

We first carry out this program for exact-gradient ULA, proving an explicit non-asymptotic bound on $\mathbb E[F(x_k)]-\min F$ which tracks the dependence on $\beta$, $d$, the structural constants of $F$, the stepsize, and $C_{\mathrm{LS}}(\beta,d)$. Optimizing the parameters gives the iteration complexity
\[
    \tilde O\!\left(
        \frac{\beta^2 d\,C_{\mathrm{LS}}(\beta,d)^2}{\epsilon^2}
    \right).
\]
The key feature is the quadratic dependence on $C_{\mathrm{LS}}(\beta,d)$, improving over bounds obtained by first controlling Wasserstein-type distances. We then extend the analysis to an inexact-gradient version of ULA, where $\nabla F(x)$ is replaced by a possibly biased stochastic surrogate $g(x,\xi)$. Our oracle assumption is flexible, covering bounded-variance stochastic gradients, mini-batch gradients, biased deterministic approximations, and finite-difference estimators.

As applications, we derive stochastic-gradient and zeroth-order Langevin optimization guarantees. The zeroth-order part is a central contribution of the paper. We consider both Gaussian and spherical finite-difference estimators, which are naturally biased because they approximate gradients of smoothed objectives and also introduce variance through random directions. We show that both estimators satisfy the quadratic-growth error condition required by our inexact-ULA theory and obtain explicit function-evaluation complexity bounds for the resulting zeroth-order Langevin algorithms. To the best of our knowledge, these are the first non-asymptotic global non-convex optimization complexity bounds for zeroth-order ULA under the standard smooth dissipative assumptions.

Our analysis may also be relevant beyond global optimization, in particular for Langevin-based Plug-and-Play (PnP)~\cite{Laumont2022PnPULA,Laumont2023MAPPnP,KlatzerMelidonisPereyraZygalakis2026,BoutonThouveninRepettiChainais2026} and proximal sampling methods in imaging inverse problems~\cite{HabringHollerPock2024,EhrhardtKugerSchoenlieb2024,HabringFalkZachPock2026,RenaudDeBortoliLeclairePapadakis2025}. The techniques developed here may also provide new insights into noisy zeroth-order optimization~\cite{NesterovSpokoiny2017,GhadimiLan2013,DuchiJordanWainwrightWibisono2015,rando2025structured,rando2026zobaefficientsingleloopzerothorder,Rando2023,Rando2024} and can be applied in future work on standard applications of zeroth-order optimization~\cite{zoo,rando2025new,malladi_llm_fine_tuning,sartore2024automatic}.

The rest of the paper is organized as follows. Section~\ref{sec:Langevin} studies exact-gradient ULA and proves the main excess-risk bound. Section~\ref{sec:Inexact} develops the inexact-gradient theory, including stability of the iterates and propagation of gradient approximation errors. Section~\ref{sec:Applications} applies the framework to mini-batch and zeroth-order Langevin algorithms. Detailed comparisons with existing bounds are given throughout the paper and in the appendices.

\subsection{Notations}

Throughout the paper, we employ standard asymptotic notation to characterize parameter choices and algorithmic complexity. For real-valued functions $f(x)$ and $g(x)$, we write:
\begin{itemize}
    \item $f(x) = O(g(x))$ if there exist a constant $c > 0$ and $x_0$ such that $|f(x)| \leq c|g(x)|$ for all $x \geq x_0$.
    \item $f(x) = \Theta(g(x))$ if there exist constants $c_1 > 0$, $c_2 > 0$, and $x_0$ such that $c_1|g(x)| \leq |f(x)| \leq c_2|g(x)|$ for all $x \geq x_0$.
    \item $f(x) = \tilde{O}(g(x))$ to denote an asymptotic upper bound that suppresses polylogarithmic factors, meaning $f(x) = O(g(x) \log^k(g(x)))$ for some integer $k \ge 0$.
    \item $f(x) = \tilde{\Theta}(g(x))$ to denote exact asymptotic order up to polylogarithmic factors, meaning $f(x) = \Theta(g(x) \log^k(g(x)))$ for some integer $k \ge 0$.
\end{itemize}

\section{Langevin for non-convex optimization}\label{sec:Langevin}

We begin with the exact-gradient setting. Let
\begin{equation}\label{eq:the_problem}
    \min_{x\in\mathbb{R}^d} F(x)
\end{equation}
be the optimization problem of interest. Throughout this section, \(F\) is assumed to satisfy the smoothness and dissipativity assumptions stated below. For a fixed inverse temperature \(\beta>0\), we denote by $\pi_\beta$ the associated Gibbs measure, defined as in \eqref{eq:GIBBS}. Our goal is to obtain a non-asymptotic bound on the expected excess risk of the Unadjusted Langevin Algorithm, rather than a sampling guarantee for the full distribution of the iterates. The algorithm is the Euler--Maruyama discretization of the overdamped Langevin diffusion and is given by
\[
    x_{k+1}
    =
    x_k-\gamma\nabla F(x_k)
    +
    \sqrt{2\gamma\beta^{-1}}\,z_k,
    \qquad
    z_k\sim\mathcal N(0,I_d).
\]
We denote by \(\mu_k\) the law of \(x_k\). The main result of this section controls $\mathbb{E}[F(x_k)]-\min F$ by combining a KL convergence estimate for ULA with a direct KL-to-objective comparison. This avoids the intermediate Wasserstein step and is the source of the improved dependence on the Log-Sobolev constant.

\begin{algorithm}[H] 
\caption{Standard ULA} \label{alg:ULA}
\begin{algorithmic}
  \STATE{{\bf Input:} $x_0 \sim \mu_0$, $\beta, \gamma \in \mathbb{R}_+$}
     \FOR{$k = 0,1,\dots$}
         \STATE{sample an independent $z_k$ from $\mathcal{N}(0, I_d)$}
         \STATE{$x_{k + 1} = x_k - \gamma \nabla F(x_k) +\sqrt{2\gamma\beta^{-1}} z_k$}
     \ENDFOR
 \end{algorithmic}    
\end{algorithm}

\subsection{Assumptions and preliminary results}

Under the following assumptions, the convergence of the Langevin dynamic and, specifically, bounds for the expected excess risk $\mathbb{E}[F(X_k)] - F^*$ have been analyzed in recent works such as \cite{Raginsky2017, XCZG2018}.

\begin{assumption}[$F$ is smooth]\label{ass:smoothness}
    The function $F$ is $M$-smooth; namely, it is continuously differentiable and its gradient is $M$-Lipschitz continuous:
    \begin{equation*}
        (\forall x,y \in \mathbb{R}^d) \qquad \| \nabla F(x) - \nabla F(y) \| \leq M \| x - y \|.
    \end{equation*}
\end{assumption}

\begin{assumption}[$F$ is dissipative]\label{ass:dissipativity}
    The function $F$ is $(m, b)$-dissipative; that is, for some constants $m, b > 0$,
    \begin{equation*}
        (\forall x \in \mathbb{R}^d) \qquad \langle x, \nabla F(x) \rangle \geq m \| x \|^2 - b.
    \end{equation*}
\end{assumption}

\begin{remark}
    Notice that under these hypothesis it has to hold $M\geq m$.
\end{remark}

The dissipativity condition is the basic stability assumption in our work. It says that outside a sufficiently large ball the drift $-\nabla F(x)$ points back toward the origin, preventing the Langevin dynamics from escaping to infinity and ensuring confinement of the process. A canonical example is $F(x) = \frac{\lambda}{2}\|x\|^2 + \psi(x)$ where $\lambda > 0$ and $\|\nabla \psi\|_\infty < \infty$, which is dissipative even when $\psi$ is non-convex. Dissipativity is weaker than other geometric assumptions often used in non-log-concave sampling, such as strong convexity outside a compact set, or related contractivity-at-infinity conditions \cite{CCABJ2019,eberle2016reflection,majka2018nonasymptotic}. In contrast, we follow the dissipative framework of \cite{Raginsky2017}. We present here some useful results that apply to functions satisfying the previous Assumptions \ref{ass:smoothness}-\ref{ass:dissipativity}. We state the results for a general function $f$ because we will apply them to different functions in the rest of the paper. The proofs are given in Appendix \ref{app:proof_preliminaries}.

\begin{lemma}\label{lem:quad_growth}
    Let $f$ be a function satisfying Assumption \ref{ass:smoothness} (it is $L$-smooth) and Assumption \ref{ass:dissipativity} (it is $(m,b)$-dissipative). Then $f$ satisfies the following:
    \begin{itemize}
        \item[(i)] $\argmin\limits_{x\in \R^d} f(x)\neq \emptyset$,
        \item[(ii)] for all $x\in \R^d$ it holds $\|\nabla f(x)\|\leq L\|x\| + B$,
        \item[(iii)] for all $x\in \R^d$ it holds $\frac{m}{3}\|x\|^2-\frac{b}{2}\log 3 + C \leq f(x)\leq \frac{L}{2}\|x\|^2 +B\|x\|+A$,
    \end{itemize}
    where $A=|f(0)|$, $B=\|\nabla f(0)\|$ and $C = \min\limits_{x \in \mathbb{R}^d} f(x)$.
\end{lemma}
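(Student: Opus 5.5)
The plan is to prove the items in the order (ii), (i), (iii). Item (i) is needed first because the lower bound in (iii) involves $C=\min f$. For (ii), $L$-Lipschitz continuity of $\nabla f$ (Assumption \ref{ass:smoothness}) and the triangle inequality give
\[
\|\nabla f(x)\|\le \|\nabla f(x)-\nabla f(0)\|+\|\nabla f(0)\|\le L\|x\|+B .
\]
The upper bound in (iii) is the standard descent lemma at the base point $0$:
\[
f(x)\le f(0)+\langle\nabla f(0),x\rangle+\tfrac L2\|x\|^2\le A+B\|x\|+\tfrac L2\|x\|^2,
\]
using Cauchy--Schwarz and $f(0)\le|f(0)|$.

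The central tool is a one-dimensional integration along rays. Fix $x\neq0$ and set $g(t)=f(tx)$ for $t>0$. Then $g$ is $C^1$, and dissipativity (Assumption \ref{ass:dissipativity}) applied at the point $tx$ gives
\[
g'(t)=\langle\nabla f(tx),x\rangle=\tfrac1t\langle tx,\nabla f(tx)\rangle\ge m t\|x\|^2-\tfrac bt .
\]
For any $0<t_0<1$, integrating from $t_0$ to $1$ yields
\begin{equation*}
f(x)\ge f(t_0x)+\tfrac m2(1-t_0^2)\|x\|^2+b\log t_0 .
\end{equation*}
Working on $[t_0,1]$ with $t_0>0$ is the one delicate point: it avoids the singular term $b/t$ at $t=0$.

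For (i), fix a radius $r>0$ and take $\|x\|>r$ with $t_0=r/\|x\|$. The point $t_0x$ lies on the sphere of radius $r$, so
\[
f(x)\ge \min_{\|y\|=r}f(y)+\tfrac m2(\|x\|^2-r^2)-b\log(\|x\|/r).
\]
The minimum over the sphere is finite by continuity and compactness. The right-hand side tends to $+\infty$, so $f$ is coercive. Its sublevel sets are then closed and bounded, and Weierstrass' theorem gives a minimizer.

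For the lower bound in (iii), once $C=\min f$ exists, take $t_0=1/\sqrt3$ in the ray inequality. Then $\tfrac m2(1-\tfrac13)=\tfrac m3$ and $b\log(1/\sqrt3)=-\tfrac b2\log3$. Combined with $f(t_0x)\ge C$, this gives exactly
\[
f(x)\ge \tfrac m3\|x\|^2-\tfrac b2\log3+C .
\]
The case $x=0$ is trivial since $f(0)\ge C$. I expect no real obstacle beyond keeping the order (i) before (iii) and the care at $t=0$ noted above.
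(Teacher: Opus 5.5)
Your proposal is correct and follows essentially the same route as the paper. Item (ii) comes from the triangle inequality. Item (i) comes from integrating the dissipativity bound $\frac{d}{dt}f(tx)\ge mt\|x\|^2-b/t$ along rays to get coercivity; the paper integrates outward from the unit sphere, which is your argument with $r=1$ after reparametrization. Item (iii) comes from the descent lemma at $0$ plus the ray integration on $[1/\sqrt3,1]$ of \cite[Lemma 2]{Raginsky2017}, to which the paper defers, with their $f\ge 0$ replaced by $f\ge C$.
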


\begin{corollary}\label{cor:abs-quadratic-bound}
Let $f:\R^d\to\R$ satisfy Assumption~\ref{ass:smoothness} ($L$-smooth) and Assumption~\ref{ass:dissipativity} ($(m,b)$-dissipative). Then for all $x\in\R^d$,
\begin{equation*}
|f(x)|\leq \bar L\|x\|^2 + \bar A    
\end{equation*}
where $\bar L= \frac{L+1}{2}$, $\bar A=A+\frac{B^2}{2}
\;+\;
|C|,$ and $A,B,C$ are the constants from Lemma~\ref{lem:quad_growth}.
\end{corollary}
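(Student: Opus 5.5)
The bound follows from the two-sided estimate in Lemma~\ref{lem:quad_growth}(iii). We control $f(x)$ from above and $-f(x)$ from above separately, then take the maximum.

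\emph{Upper side.} By Lemma~\ref{lem:quad_growth}(iii), $f(x)\le \frac{L}{2}\|x\|^2+B\|x\|+A$. Young's inequality gives $B\|x\|\le \frac12\|x\|^2+\frac{B^2}{2}$, so
\[
f(x)\le \frac{L+1}{2}\|x\|^2+A+\frac{B^2}{2}\le \bar L\|x\|^2+\bar A,
\]
where the last step uses $|C|\ge 0$.

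\emph{Lower side.} Since $C=\min f$ exists by Lemma~\ref{lem:quad_growth}(i), we have $f(x)\ge C\ge -|C|$. Hence $-f(x)\le |C|\le \bar L\|x\|^2+\bar A$, because $\bar L\ge 0$ and $A,\,B^2/2\ge 0$. This step uses the trivial bound $f\ge \min f$ rather than the quadratic lower bound in (iii). That bound would introduce the extra constant $\frac{b}{2}\log 3$, which does not appear in $\bar A$, and it is unnecessary here.

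Combining the two sides gives $|f(x)|=\max\{f(x),-f(x)\}\le \bar L\|x\|^2+\bar A$. There is no real obstacle. The only point requiring care is to use $f\ge C$, not the dissipativity lower bound, so that the constant matches the stated $\bar A$ exactly.
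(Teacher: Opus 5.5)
Your proposal is correct and follows the paper's proof essentially verbatim: the upper bound from Lemma~\ref{lem:quad_growth}(iii) combined with Young's inequality $B\|x\|\le \frac12\|x\|^2+\frac{B^2}{2}$, the lower bound from $f\ge C=\min f$, and then $|f|=\max\{f,-f\}$. Your remark that using $f\ge \min f$ instead of the quadratic lower bound keeps the constant exactly equal to $\bar A$ (with no $\frac{b}{2}\log 3$ term) matches the paper's choice.
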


\begin{remark}\label{rem:diss_minprop}
    Notice that for any $x^* \in \mathbb{R}^d$ such that $\nabla f(x^*) =0$, we have that, by Assumption \ref{ass:dissipativity}, $0\geq m \|x^*\|^2 - b$. In particular, all critical points of $f$ are contained in a ball of radius $\sqrt{\frac{b}{m}}$ and we have $\|\nabla f(0)\|\leq L \|x^*\| \leq L\sqrt{\frac{b}{m}}$.    
\end{remark}

To perform our analysis, we also need the following assumption on the initialization.
\begin{assumption}\label{ass:mu_0}
    The starting distribution $\mu_0$ has a bounded, strictly positive density and satisfies
    \begin{equation*}
        \kappa_0 := \int e^{\|x\|^2} \, d\mu_0(x) < +\infty.
    \end{equation*}
    Consequently, because $\|x\|^2 \le e^{\|x\|^2}$ and $\|x\|^4 \le e^{\|x\|^2}$ for all $x \in \mathbb{R}^d$, the distribution $\mu_0$ has finite second and fourth moments bounded by $\kappa_0$, i.e., $\int \|x\|^2 \, d\mu_0(x) \leq \kappa_0$ and $\int \|x\|^4 \, d\mu_0(x) \leq \kappa_0$.
\end{assumption}
Such an assumption has already been considered in other works (see e.g. \cite{Raginsky2017, XCZG2018}) and it is satisfied, for instance, by assuming $\mu_0$ Gaussian.\\

We finally recall that, under the assumptions of smoothness and dissipativity, the Gibbs measure $\pi_{\beta}$ satisfies a Logarithmic Sobolev Inequality (LSI). While Raginsky et al.\ \cite[Proposition 9]{Raginsky2017} state the LSI result for potentials with merely Lipschitz continuous gradients (i.e., $C^{1,1}$), their proof invokes \cite{Cattiaux2010}, which necessitates a twice continuously differentiable ($C^2$) potential to pointwise bound the Hessian. To bridge this regularity gap, we provide a complete proof via mollification in Appendix \ref{sec:CLS_bounds}.

\begin{proposition}\label{prop:LSI}
    Let the function $F$ satisfy Assumptions \ref{ass:smoothness} and \ref{ass:dissipativity} (specifically, let $F$ be $M$-smooth and $(m,b)$-dissipative). Then, the Gibbs measure $d\pi_\beta(x) = Z^{-1} e^{-\beta F(x)} dx$ satisfies a Logarithmic Sobolev Inequality with constant $\CLS (\beta, d)$. That is, for all probability measures $\mu$ absolutely continuous with respect to $\pi_{\beta}$, the following inequality holds
    \begin{equation*}
        \KL(\mu||\pi_{\beta}) \leq 2C_{\text{LS}}(\beta,d) \int_{\mathbb{R}^d} \left\|\nabla \sqrt{\frac{d\mu}{d\pi_{\beta}}}\right\|^2 \, d\pi_{\beta} = \frac{C_{\text{LS}}(\beta,d)}{2}\int_{\mathbb{R}^d} \left\|\nabla \log \left(\frac{d\mu}{d\pi_{\beta}}\right)\right\|^2\,d\mu.
    \end{equation*}
\end{proposition}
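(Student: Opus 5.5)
The plan is to follow the classical route of Bakry--Émery plus Holley--Stroock perturbation, organized around a Lyapunov-function criterion of Cattiaux--Guillin--Wu, and to get around the $C^2$ requirement by mollifying $F$. Let $\rho_\varepsilon$ be a standard Gaussian mollifier and set $F_\varepsilon = F * \rho_\varepsilon$. Then $F_\varepsilon \in C^\infty$ and $\nabla F_\varepsilon = (\nabla F)*\rho_\varepsilon$ is still $M$-Lipschitz, so $-M I \preceq \nabla^2 F_\varepsilon \preceq M I$ pointwise. Moreover $\|F_\varepsilon - F\|_\infty \le M\varepsilon^2 d/2$, because $F$ is $M$-smooth and the mollifier has zero mean.

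Next I would check that dissipativity survives the mollification. Write $\langle x,\nabla F_\varepsilon(x)\rangle = \mathbb{E}\langle x,\nabla F(x-\varepsilon Y)\rangle$ with $Y\sim\mathcal N(0,I_d)$, and use $\|\nabla F(x-\varepsilon Y)-\nabla F(x)\|\le M\varepsilon\|Y\|$. This gives $\langle x,\nabla F_\varepsilon(x)\rangle \ge m\|x\|^2 - b - M\varepsilon\sqrt d\,\|x\| \ge \tfrac m2\|x\|^2 - b'$, with $b' = b + M^2\varepsilon^2 d/(2m)$. So for, say, $\varepsilon\le1$, the mollified potential $F_\varepsilon$ is $C^2$, $M$-smooth and $(m/2,b')$-dissipative, with constants uniform in $\varepsilon$.

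For the $C^2$ potential $\beta F_\varepsilon$ I would then run the proof of \cite[Proposition 9]{Raginsky2017} via \cite{Cattiaux2010}. Take the Lyapunov function $V(x)=e^{\beta m\|x\|^2/8}$, or any function with the same structure. Dissipativity gives $\mathcal L V \le -\theta\|x\|^2 V + \kappa\,\mathbf 1_{B_R}$ for the generator $\mathcal L = \beta^{-1}\Delta - \nabla F_\varepsilon\cdot\nabla$. This yields a Poincaré inequality, using that the Poincaré constant of a measure restricted to a ball is controlled by Holley--Stroock relative to the uniform measure. Combining this with the Hessian lower bound $\nabla^2(\beta F_\varepsilon)\succeq -\beta M I$ upgrades it to an LSI for $\pi_\beta^\varepsilon\propto e^{-\beta F_\varepsilon}$. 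The resulting constant $C_\varepsilon$ depends only on $(\beta,d,M,m,b')$ and is therefore uniform in $\varepsilon\le1$.

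Finally I would transfer the inequality back to $F$. Since $\|\beta F_\varepsilon - \beta F\|_\infty\le \beta M\varepsilon^2 d/2 =: \delta_\varepsilon$, the Holley--Stroock perturbation lemma gives an LSI for $\pi_\beta$ with constant $e^{2\delta_\varepsilon}C_\varepsilon$ (in the form with $2C\int\|\nabla\sqrt{h}\|^2$). Letting $\varepsilon\to0$, or simply fixing a small $\varepsilon$, gives the constant $C_{\mathrm{LS}}(\beta,d)$. The second expression in the statement follows from $\nabla\sqrt h = \tfrac12\sqrt h\,\nabla\log h$, which gives $2C\int\|\nabla\sqrt h\|^2d\pi_\beta = \tfrac C2\int\|\nabla\log h\|^2 h\,d\pi_\beta$, first for smooth $h$ and then by density.

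The main obstacle is the third step. There one has to verify carefully that the Cattiaux--Guillin--Wu constants depend only on the Hessian lower bound and the dissipativity parameters, and not on finer properties of $F_\varepsilon$, so that they are uniform in $\varepsilon$. I would first write out explicit expressions, e.g.\ $R^2\sim (b'+d/\beta)/m$, and track them.
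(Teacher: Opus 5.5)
Your argument is correct, but it is organized differently from the paper's proof.

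The paper's route has three steps:
\begin{itemize}
\item It uses a compactly supported even mollifier and the zero-mean splitting $\langle x,\nabla F(x-y)\rangle=\langle x-y,\nabla F(x-y)\rangle+\langle y,\nabla F(x-y)-\nabla F(x)\rangle$. This way $F_\epsilon$ keeps the full constant $m$, with $b_\epsilon=b+(M-m)\epsilon^2 s_\rho^2\to b$.
\item It applies Holley--Stroock to the \emph{Poincar\'e} inequality, in the opposite direction to yours: $C_P(\pi_\epsilon)\le e^{2\beta D_\epsilon}C_P(\pi_\beta)$. It feeds this into the Poincar\'e-plus-Hessian-to-LSI result of Raginsky et al.\ for the smooth $F_\epsilon$.
\item It passes to the limit in the LSI itself, using $\|\log q_\epsilon\|_\infty\to0$ and $\|\nabla\log q_\epsilon\|_\infty\to0$ for $q_\epsilon=d\pi_\epsilon/d\pi_\beta$. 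This gives convergence of both the relative entropy and the Fisher information.
\end{itemize}

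Your single Holley--Stroock transfer of the LSI from $\pi^\varepsilon_\beta$ to $\pi_\beta$ replaces that limit argument and even works at a fixed $\varepsilon$. The price is that you must re-derive the Poincar\'e inequality for each $\pi^\varepsilon_\beta$ by the Lyapunov argument and check uniformity in $\varepsilon$. That uniformity does hold: for example $\|\nabla F_\varepsilon(0)\|\le\|\nabla F(0)\|+M\varepsilon\sqrt d$, and the oscillation of $\beta F_\varepsilon$ on $B_R$ is controlled by $\beta$, $M$, $R$ and $\|\nabla F_\varepsilon(0)\|$.

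The paper's ordering avoids this work by importing the Poincar\'e constant $C_P(\beta,d)$ of the original measure. As a result its constant is exactly $C_{\mathrm R}(\beta,d)$ in \eqref{eq:CLS_bound}, expressed through $C_P(\beta,d)$ and the original $(m,b)$, and that explicit formula is what the appendix uses later.

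Your $(m/2,b')$ dissipativity does not recover $m$ as $\varepsilon\to0$, so your explicit constant would be somewhat worse. This is not a gap, since the proposition does not fix the value of $C_{\mathrm{LS}}(\beta,d)$. It also disappears if you use the zero-mean splitting above, which works verbatim for your Gaussian mollifier and gives $(m,\,b+(M-m)\varepsilon^2 d)$.
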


The constant $\CLS(\beta,d)$ governs the convergence of Langevin dynamics to the Gibbs measure. In non-convex landscapes, it may scale exponentially in $\beta$ and $d$, reflecting metastability and the difficulty of crossing energy barriers. Therefore, even reducing the dependence from $\CLS(\beta,d)^3$ to $\CLS(\beta,d)^2$ removes an entire exponential factor. This motivates our focus on tracking the dependence on $\beta$, $d$, and $\CLS(\beta,d)$ explicitly and on mitigating the impact of these exponential factors whenever possible. While many Langevin analyses are formulated as sampling guarantees, the precise dependence on these quantities is not always tracked at the level needed for optimization, nor is it necessarily optimal for objective-value guarantees. Since our goal is optimization rather than sampling, we control the expected excess risk $\mathbb{E}[F(x_k)]-\min F$ directly, avoiding Wasserstein-based intermediate bounds and obtaining sharper dependence on $\CLS(\beta,d)$.

\subsection{Main results}\label{sec:main_ris_ula}

Our main result is the following theorem, together with the correspondent corollary.

\begin{theorem}\label{thm:main_bound}
    Suppose Assumptions \ref{ass:smoothness}, \ref{ass:dissipativity} hold. Let $\mu_0$ be a starting distribution that satisfies Assumption \ref{ass:mu_0} and let
    \begin{equation*}
        \beta\geq\max\left\{d,\frac{4}{m}\right\}, \quad \gamma \le \frac{1}{4\beta M^2\CLS (\beta, d)}.
    \end{equation*}
    Let $(x_k)_{k \in \mathbb{N}}$ be a sequence of iterates generated by Algorithm \ref{alg:ULA}. Then, we have
    \begin{equation*}
        \begin{aligned}
            \mathbb{E}[F(x_k)] - \min F \leq \ &  C_0(M+1)\left(C_1\beta\,e^{-k\gamma/(2\beta \CLS (\beta,d))} + \sqrt{8\beta M^2 d \gamma \CLS (\beta,d)} + 4 \beta M^2 d\gamma \CLS (\beta,d)\right)\\
        & \hspace{1cm} +\frac{d}{2\beta}\log \left(\frac{e M}{m}\left(\frac{b \beta}{d}+1\right)\right)
         \end{aligned}
    \end{equation*}
    where $C_0$ and $C_1$ (explicit in the proof) are constants independent of $d$ and the algorithm parameters $\beta$ and $\gamma$.
\end{theorem}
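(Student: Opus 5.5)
The proof splits the excess risk as
\[
\mathbb{E}[F(x_k)]-\min F=\Big(\int F\,d\mu_k-\int F\,d\pi_\beta\Big)+\Big(\int F\,d\pi_\beta-\min F\Big).
\]
The second term is the Gibbs bias. I will bound it by the standard Laplace-type argument of Raginsky et al. First, $\int F\,d\pi_\beta-\min F=\frac1\beta\big(h(\pi_\beta)+\log Z+\beta\min F\big)$, where $h$ denotes differential entropy. Next, bound $h(\pi_\beta)$ by the entropy of a Gaussian with the same second moment. That second moment is controlled by dissipativity: for $\beta\ge 4/m$ it is at most of order $(b+d/\beta)/m$. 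Finally, bound $\log Z+\beta\min F$ from below using $M$-smoothness around a minimizer, namely $F(x)\le \min F+\frac M2\|x-x^*\|^2$, which gives $Z\ge e^{-\beta\min F}(2\pi/(\beta M))^{d/2}$. Combining these and using $\beta\ge d$ to absorb constants should produce exactly $\frac{d}{2\beta}\log\big(\frac{eM}{m}(\frac{b\beta}{d}+1)\big)$.

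For the first term, I will avoid Wasserstein distances and pass directly from KL to objective value via a weighted Csiszár--Kullback--Pinsker inequality (Bolley--Villani). For a measurable $\phi$ with $|\phi|\le\bar L\|x\|^2+\bar A$, which holds for $F$ by Corollary~\ref{cor:abs-quadratic-bound}, the inequality gives
\[
\Big|\int\phi\,d(\mu_k-\pi_\beta)\Big|\le C(\pi_\beta)\Big(\sqrt{\KL(\mu_k\|\pi_\beta)}+\KL(\mu_k\|\pi_\beta)\Big),
\]
with $C(\pi_\beta)$ depending on $\log\int e^{\lambda(\bar L\|x\|^2+\bar A)}d\pi_\beta$ for a suitable $\lambda$. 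Since $\bar L=(M+1)/2$, this is the source of the $(M+1)$ prefactor. The exponential moment of $\pi_\beta$ will be bounded using dissipativity. Lemma~\ref{lem:quad_growth}(iii) gives $e^{-\beta F}\lesssim e^{-\beta m\|x\|^2/3}$, so taking $\lambda$ of order a constant times $1/\bar L$ and using $\beta\ge 4/m$ yields a bound independent of $d$ after normalization. This normalization step is where I expect constants to need care, in order to keep $C_0$ free of $d$ and $\beta$.

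It remains to bound $\KL(\mu_k\|\pi_\beta)$. I will adapt the Vempala--Wibisono one-step interpolation analysis to the scaling $dX=-\nabla F\,dt+\sqrt{2/\beta}\,dB$. Equivalently, this is the potential $\beta F$ run with time step $\gamma/\beta$, whose Gibbs measure has an LSI constant related to $\CLS$ via Proposition~\ref{prop:LSI}. That analysis gives a recursion
\[
\KL(\mu_{k+1}\|\pi_\beta)\le e^{-\gamma/(\beta\CLS)}\KL(\mu_k\|\pi_\beta)+c\,\beta M^2 d\gamma^2,
\]
valid under $\gamma\le 1/(4\beta M^2\CLS)$. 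Iterating it yields
\[
\KL(\mu_k\|\pi_\beta)\le e^{-k\gamma/(\beta\CLS)}\KL(\mu_0\|\pi_\beta)+8\beta M^2 d\gamma\CLS .
\]
I also need $\KL(\mu_0\|\pi_\beta)\le C_1^2\beta^2$, or rather a bound whose square root is $O(\beta)$. Using the bounded density of $\mu_0$, the upper bound on $F$ from Lemma~\ref{lem:quad_growth}(iii), the lower bound on $\log Z$, and $\kappa_0$, I expect $\KL(\mu_0\|\pi_\beta)\lesssim \beta(\text{moments})+d\log\beta$, which is absorbed via $\beta\ge d$. Finally, I substitute into the weighted Pinsker bound and use $\sqrt{a+b}\le\sqrt a+\sqrt b$. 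The $\sqrt{\KL}$ term then produces $C_1\beta e^{-k\gamma/(2\beta\CLS)}$ and $\sqrt{8\beta M^2d\gamma\CLS}$. The linear $\KL$ term produces the $4\beta M^2d\gamma\CLS$ term, with the exponential part absorbed into the first term after adjusting $C_0$ and $C_1$.

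The main obstacle is the one-step KL contraction for ULA under only LSI and smoothness. This step requires the time-scaling bookkeeping in $\beta$ and a bound on $\mathbb{E}\|\nabla F\|^2$ along the interpolation, which is done through the Fisher information as in Vempala--Wibisono, so that no extra $\CLS$ factor appears.
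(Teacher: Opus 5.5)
Your architecture matches the paper's: the same two-term split, the Bolley--Villani weighted Csisz\'ar--Kullback--Pinsker inequality for the sampling term, Vempala--Wibisono applied to $\beta F$ with step $\gamma/\beta$, and the entropy/Laplace argument of Raginsky et al.\ (their Proposition~11) for the Gibbs bias. The gap is the step you flagged: the exponential moment of $\pi_\beta$, which is what keeps $C_0$ free of $\beta$ and $d$. Dominating $e^{-\beta F}$ pointwise by Lemma~\ref{lem:quad_growth}(iii) and normalizing with $Z\ge e^{-\beta\min F}(2\pi/(\beta M))^{d/2}$ gives
\[
\log\int e^{c\|x\|^2}\,d\pi_\beta\le \frac{\beta b}{2}\log 3+\frac d2\log\frac{\beta M}{2(\beta m/3-c)} .
\]
Two losses appear here. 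The additive constant $-\frac b2\log 3$ gets multiplied by $\beta$, and the mismatch between the curvatures $m/3$ and $M/2$ costs a constant factor per dimension. With your choice $c=\lambda\bar L=O(1)$, this is of order $\beta+d$. The Bolley--Villani prefactor $\frac{2\bar L}{c}\bigl(\frac32+\frac{c\bar A}{\bar L}+\log\int e^{c\|x\|^2}\,d\pi_\beta\bigr)$ is then $\Theta(\beta)$ (using $\beta\ge d$), so you do not get the stated bound.

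There are two ways out. The paper keeps $c=1$ but proves the sharp estimate $\log\int e^{\|y\|^2}d\pi_\beta\le\frac{2(d+\beta b)}{\beta m}$ (Lemma~\ref{lem:exp_bound_pi}). It applies $\int\Delta g\,d\pi_\beta=\beta\int\langle\nabla F,\nabla g\rangle\,d\pi_\beta$ to $g=e^{\alpha\|y\|^2}$, justified by a cutoff argument. Dissipativity then gives $(\beta m-2\alpha)H'(\alpha)\le(d+\beta b)H(\alpha)$ for $H(\alpha)=\int e^{\alpha\|y\|^2}d\pi_\beta$, and integrating over $\alpha\in[0,1]$ gives a bound of at most $2(1+b)/m$ when $\beta\ge d$. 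Alternatively, you can keep your crude bound but let $c$ grow with $\beta$, e.g.\ $c=\beta m/6$. Then $\log\int e^{c\|x\|^2}d\pi_\beta\le\frac{\beta b}{2}\log 3+\frac d2\log\frac{3M}{m}$, and the factor $\frac{2\bar L}{c}=\frac{12\bar L}{\beta m}$ cancels the linear growth. For $\beta\ge\max\{d,4/m\}$ the prefactor is then at most $(M+1)\bigl(\frac94+\frac{2\bar A}{M+1}+\frac3m(b\log 3+\log\frac{3M}{m})\bigr)$. This second fix is more elementary than the paper's lemma and gives a constant of the same type.

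There are also three bookkeeping slips. First, you need $\KL(\mu_0\|\pi_\beta)=O(\beta)$, not just a square root of order $\beta$. The term $\frac12 e^{-k\gamma/(\beta\CLS(\beta,d))}\KL(\mu_0\|\pi_\beta)$ can be absorbed into $C_1\beta e^{-k\gamma/(2\beta\CLS(\beta,d))}$ only then (take $k=0$). The $O(\beta)$ bound does hold, but $\log Z$ enters $\KL(\mu_0\|\pi_\beta)=\int p_0\log p_0+\beta\int F\,d\mu_0+\log Z$ with a plus sign. So you need an \emph{upper} bound on $Z$, coming from the quadratic lower bound on $F$, rather than the Laplace lower bound. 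This produces $\frac d2\log\frac{3\pi}{m\beta}\le\frac d2\log\frac{3\pi}{m}$, so there is no $+d\log\beta$ term, which would otherwise leave a $\log\beta$ inside $C_1$. Second, after rescaling, the one-step Vempala--Wibisono error is $6(\gamma/\beta)^2 d(\beta M)^2=6dM^2\gamma^2$, with no factor $\beta$. With your $c\,\beta M^2 d\gamma^2$, the geometric sum would give $\beta^2M^2d\gamma\CLS(\beta,d)$ instead of $8\beta M^2d\gamma\CLS(\beta,d)$. Third, the entropy identity should read $\int F\,d\pi_\beta-\min F=\frac1\beta\bigl(h(\pi_\beta)-\log Z-\beta\min F\bigr)$, though you then use the correct direction.
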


\begin{remark}
    Since $M,m$ and $b$ are fixed quantities (once $F$ is fixed), then, under the same hypothesis of the previous theorem we have
    \[\E[F(x_k)]-\min F \leq \bar C \left( \beta e^{-k\gamma / (2 \beta \CLS(\beta,d))} + \sqrt{d\gamma \beta \CLS(\beta,d)}+d\gamma \beta \CLS(\beta,d)+ \frac{d}{\beta} \left(1+\log\left(\frac{\beta}{d}\right)\right)\right),\]
    where $\bar C$ is independent of $\beta, \gamma$ and $d$.
\end{remark}
In the following corollary, we study the complexity of Algorithm \ref{alg:ULA}, namely the number of iterations required to achieve $\E[F(x_k)] - \min F \leq \epsilon$ for $\epsilon \in (0,1)$. To simplify the presentation, we report only the dependence on $d$, $\beta$, $\CLS(\beta, d)$, and $\gamma$ in the choice of the parameters, while explicit constants are provided in the proof.
\begin{corollary}\label{cor:main}
   Under the same assumptions of Theorem \ref{thm:main_bound}, fix an error parameter $\epsilon \in (0,1)$. Let
    \begin{equation*}
        \beta = \tilde \Theta\left(\frac{d}{\epsilon}\right), \quad \text{and} \quad \gamma= \Theta\left(
\frac{\epsilon^2}{\beta d \CLS (\beta, d)}
\right).
    \end{equation*}
    Let $x_k \in \mathbb{R}^d$ be generated by Algorithm \ref{alg:ULA} at timestep $k$. Fixing 
    \begin{equation}\label{eq:total_count_needed_ULA}
        k = O\left(\frac{\beta}{\gamma} \CLS (\beta, d) \log\left(\frac{\beta d}{\epsilon}\right)\right)= \tilde O\left(\frac{\beta^2 d\CLS (\beta,d)^2}{\epsilon^2}\right),
    \end{equation}
    we have $\E[F(x_k)]-\min F\leq \epsilon$.

\end{corollary}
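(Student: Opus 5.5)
The proof is a parameter-tuning argument built on Theorem~\ref{thm:main_bound}. Write its bound as $T_1+T_2+T_3+T_4$, where
\begin{align*}
T_1 &= C_0(M+1)C_1\beta e^{-k\gamma/(2\beta\CLS(\beta,d))}, & T_2 &= C_0(M+1)\sqrt{8\beta M^2 d\gamma\CLS(\beta,d)},\\
T_3 &= 4C_0(M+1)\beta M^2 d\gamma \CLS(\beta,d), & T_4 &= \tfrac{d}{2\beta}\log\bigl(\tfrac{eM}{m}(\tfrac{b\beta}{d}+1)\bigr).
\end{align*}
I will choose $\beta$, $\gamma$ and $k$ in that order so that each term is at most $\epsilon/4$. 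Throughout I also check the standing requirements $\beta\ge\max\{d,4/m\}$ and $\gamma\le 1/(4\beta M^2\CLS(\beta,d))$.

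\textbf{Step 1: the Gibbs term $T_4$.} Set $\beta = c\,\frac{d}{\epsilon}\log\bigl(\frac{c'}{\epsilon}\bigr)$ with constants $c,c'$ depending only on $M,m,b$. Then $b\beta/d+1 = O\bigl(\epsilon^{-1}\log(1/\epsilon)\bigr)$, so
\[
T_4 \lesssim \frac{\epsilon}{\log(c'/\epsilon)}\Bigl(\log\tfrac{1}{\epsilon}+\log\log\tfrac{1}{\epsilon}+O(1)\Bigr),
\]
which is at most $\epsilon/4$ for $c$ large. This implicit logarithmic balancing is the main point needing care. It amounts to showing that $\frac{d}{2\beta}\log(K\beta/d)\le\epsilon/4$ whenever $\beta/d\ge \frac{C}{\epsilon}\log\frac{C}{\epsilon}$. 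Using the monotonicity of $t\mapsto \log(Kt)/t$ for large $t$ settles it. Since $\epsilon<1$, enlarging $c$ also guarantees $\beta\ge d$ and $\beta\ge 4/m$. This is the choice $\beta=\tilde\Theta(d/\epsilon)$.

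\textbf{Step 2: the discretization terms $T_2,T_3$.} Set $\gamma = c''\epsilon^2/(\beta d\CLS(\beta,d))$ with $c''$ small. Then $\beta d\gamma\CLS = c''\epsilon^2$, so $T_2 = O(\sqrt{c''}\,\epsilon)$ and $T_3=O(c''\epsilon^2)\le O(c''\epsilon)$. Both are at most $\epsilon/4$ once $c''$ is small in terms of $C_0$ and $M$. The stepsize condition reads $c''\epsilon^2/d \le 1/(4M^2)$, which holds because $d\ge1$ and $\epsilon<1$, after possibly shrinking $c''$.

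\textbf{Step 3: the contraction term $T_1$.} Require $e^{-k\gamma/(2\beta\CLS)}\le \epsilon/(4C_0C_1(M+1)\beta)$, that is,
\[
k\ge \frac{2\beta\CLS(\beta,d)}{\gamma}\log\frac{4C_0C_1(M+1)\beta}{\epsilon}.
\]
Since $\beta = \tilde\Theta(d/\epsilon)$, the logarithm is $O(\log(\beta d/\epsilon))$. Substituting $\gamma$ gives $\beta\CLS/\gamma = \beta^2 d\CLS^2/(c''\epsilon^2)$. This yields \eqref{eq:total_count_needed_ULA}, and summing the four bounds gives $\E[F(x_k)]-\min F\le\epsilon$.

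The only delicate step is Step 1. The implicit dependence of $\CLS(\beta,d)$ on $\beta$ causes no trouble, because $\gamma$ and $k$ are defined after $\beta$ is fixed.
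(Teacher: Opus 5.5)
Your proposal is correct and follows the paper's proof. You fix $\beta$ from the Gibbs term via the monotonicity of $\log(\theta x)/x$, after bounding $b\beta/d+1\le(1+b)\beta/d$. You then take $\gamma\propto\epsilon^2/(\beta d\CLS(\beta,d))$ and finally $k\ge\frac{2\beta\CLS(\beta,d)}{\gamma}\log\bigl(\frac{C\beta}{\epsilon}\bigr)$, with each term receiving a share of $\epsilon$.

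The differences are cosmetic. The paper splits $\epsilon$ into three parts, absorbing your $T_3$ into $T_2$ since $8d\gamma\beta M^2\CLS(\beta,d)\le 1$. It also gives the explicit choice $\beta=\frac{d}{2\epsilon_1}\bigl(\log\frac{\theta}{\epsilon_1}+2\log\log\frac{\theta e^2}{\epsilon_1}\bigr)$, which has the sharp leading constant $\frac{d}{2\epsilon_1}$. Your \emph{large $c$} argument fixes $\beta$ only up to an unspecified constant factor, which still suffices for the $\tilde\Theta(d/\epsilon)$ statement.
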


\begin{remark}[High-probability guarantee]
    Since by definition $F(x) \geq \min F$ for all $x \in \mathbb{R}^d$, the excess risk is a non-negative random variable. Consequently, a bound on the expectation $\mathbb{E}[F(X_k)] - \min F \leq \epsilon$ immediately translates into a high-probability guarantee via Markov's inequality. Specifically, for any confidence level $\alpha \in (0,1)$, we have
    \begin{equation*}
        \mathbb{P}\left( F(X_k) - \min F \geq \frac{\epsilon}{\alpha} \right) \leq \alpha.        
    \end{equation*}
    This confirms that the algorithm does not merely minimize the objective on average, but drives the iterates into points that have near-optimal values with high probability.
\end{remark}

\subsection{Comparison with existing works}\label{sec:comparisons_ULA}

We compare Corollary~\ref{cor:main} with the closest existing guarantees for Langevin-based non-convex optimization. Detailed derivations and translations of the rates are deferred to Appendix~\ref{app:comparisons_ULA}. The key feature of our result is the quadratic dependence on \(\CLS(\beta,d)\) for the complexity $k$, together with the tight inverse-temperature calibration discussed in Remark~\ref{rem:exact_choice_of_beta}.

\begin{table}[H]
\centering
\begin{tabular}{l c l}
\toprule
Work & Complexity & Main comparison \\
\midrule
Corollary \ref{cor:main}
&
$\tilde O\!\left(
        \dfrac{\beta^2 d\,\CLS^2}{\epsilon^2}
    \right)$
&
our direct KL-to-objective route \\
Raginsky et al.~\cite{Raginsky2017}
&
$\tilde O\!\left(\dfrac{\beta^7\CLS^5}{\epsilon^4}\right)$
&
Wasserstein-to-objective route \\
\cite{Raginsky2017} refined
&
$\tilde O\!\left(\dfrac{\beta^5\CLS^4}{\epsilon^4}\right)$
&
still worse in $\epsilon$ and $\CLS$ \\
Xu et al.~\cite{XCZG2018}
&
--
&
constants not explicit in $\beta,d,\CLS$ \\
Zou et al.~\cite{Zou2021Faster}
&
$\tilde O\!\left(\dfrac{\beta^2d^6\CLS^4}{\epsilon^2}\right)$
&
Cheeger/TV route gives worse $\CLS$ power \\
\midrule
\textbf{Require a larger $\beta$:}\\
Kinoshita--Suzuki~\cite{KinoshitaSuzuki2022}
&
$\tilde O\!\left(\dfrac{\beta^2d\,\CLS^3}{\epsilon}\right)$
&
better in $\epsilon$, worse in $\CLS$\\
Chen et al.~\cite{Chen2024}
&
$\tilde O\!\left(
\max\left\{
d^3 \CLS^3,\,
\dfrac{d^2 \CLS^2}{\epsilon^2}
\right\}
\right)$
&
hitting-time guarantee;  worse in $\CLS$ \\
\bottomrule
\end{tabular}
\caption{Comparison of exact-gradient Langevin optimization guarantees. Here $\CLS=\CLS(\beta,d)$ and polynomial factors and fixed problem-dependent constants are suppressed.}
\label{tab:comparison_exact_ula}
\end{table}

The bounds of \textbf{Raginsky, Rakhlin and Telgarsky~\cite{Raginsky2017}} proceed by first controlling a Wasserstein distance and then converting it into an objective-value estimate. Tracking the dependence on $\beta$, $d$, and $\CLS(\beta,d)$ gives the rate shown in Table~\ref{tab:comparison_exact_ula}; even a refined use of their intermediate estimates still yields a fourth power of $\CLS(\beta,d)$ and a worse dependence on $\epsilon$. The bounds of \textbf{Xu, Chen, Zou and Gu~\cite{XCZG2018}} are less directly comparable. Their rates are expressed in terms of a discrete-time spectral gap \(\lambda(\beta,d)\) and constants coming from geometric-ergodicity and Poisson-equation estimates. These constants are not made explicit in terms of \(\beta\), \(d\), and the structural parameters of the potential. Thus, their bound does not yield a fully explicit parameter calibration comparable to Corollary~\ref{cor:main}. Moreover, statements such as \(\lambda(\beta,d)=e^{-\tilde O(d)}\) are too coarse for our purposes, since different powers of \(\CLS(\beta,d)\) may all be hidden inside the same exponential notation. \textbf{Zou, Xu and Gu~\cite{Zou2021Faster}} obtain bounds in terms of a Cheeger constant. Translating this dependence through the relation between Cheeger, Poincar\'e, and logarithmic Sobolev constants gives a fourth power of $\CLS(\beta,d)$ in our setting. \textbf{Kinoshita and Suzuki~\cite{KinoshitaSuzuki2022}} provide one of the sharpest existing iteration bounds, with better direct dependence on $\epsilon$ but worse dependence on $\CLS(\beta,d)$. Since in global optimization one chooses $\beta=\tilde\Theta(d/\epsilon)$ and $\CLS(\beta,d)$ may depend exponentially on $\beta$, the power of $\CLS(\beta,d)$ is typically the dominant term. Moreover, their argument leads to a larger choice of the inverse temperature $\beta$ in the Gibbs concentration term (more details in Appendix \ref{app:comparisons_ULA}). Since $\CLS(\beta,d)$ may depend exponentially on $\beta$, this worsens the final bound beyond what is visible from the displayed dependence. Finally, \textbf{Chen, Sekhari and Sridharan~\cite{Chen2024}} obtain high-probability hitting-time guarantees based on Lyapunov potentials. Their result is complementary to ours: they control the time to hit an $\epsilon$-sublevel set under an additional self-bounding regularity assumption on the Lyapunov potential, whereas Corollary~\ref{cor:main} controls the expected excess risk of the iterate. Also in this case, their analysis requires a larger choice for $\beta$, which is not desirable, and makes the comparison more difficult.

\subsection{Proof ingredients and proof of the main results}

In order to prove our main result \ref{thm:main_bound} we make use of the following decomposition:
\begin{equation}\label{eqn:ula_main_bnd}
\begin{aligned}
    \mathbb{E}[F(x_k)] - \min F &= \underbrace{\mathbb{E}[F(x_k)-F(x^{\pi_{\beta}})]}_{(a)} + \underbrace{\mathbb{E}[F(x^{\pi_{\beta}})]-\min F}_{(b)}.
    \end{aligned}
\end{equation}

While the treatment of (b) is classical concentration of the Gibbs measure \cite[Proposition 11]{Raginsky2017}, for (a) we derive a new analysis that pass directly from the expectation to KL divergence between $\mu_k$ and $\pi_{\beta}$. To control KL we then use a result of Vempala and Wibisono \cite[Theorem 2]{VempalaWibisono2019}. 

To achieve our goal, we first make use the following crucial lemma. Usually, in the literature, a bound on $W_2$ is first provided and then a bound is derived in the expectations of functions that grow quadratically (see, e.g. \cite{Raginsky2017, XCZG2018}). We show instead that this lemma let us obtain a bound with better dependence on $\CLS(\beta,d)$ compared with the rest of the literature.

\begin{lemma}\label{lem:BV_new}
    Let $\nu$ be a probability measure on $\mathbb{R}^d$ satisfying $\int \exp(c \|x\|^2) \, d\nu(x) < +\infty$ for some $c>0$. Let $f$ be a measurable function such that $|f(x)|\leq \bar L\|x\|^2 + \bar A$ for some $\bar L, \bar A > 0$. Then, for any probability measure $\mu$ on $\mathbb{R}^d$,
    \begin{equation}
        \left|\int f(x) \, d(\mu-\nu)(x)\right| \leq C_{\nu} \left(\sqrt{\KL(\mu||\nu)}+\frac{1}{2}\KL(\mu||\nu) \right),
    \end{equation}
    where $C_{\nu}=\frac{2 \bar L}{c}\left(\frac{3}{2}+\frac{c\bar A}{\bar L}+\log\int \exp(c\|x\|^2)\, d\nu(x) \right)$.
\end{lemma}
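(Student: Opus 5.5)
This is the weighted Csiszár--Kullback--Pinsker inequality of Bolley--Villani, and we will follow its proof through the Donsker--Varadhan variational formula. Assume $\KL(\mu\|\nu)<\infty$, since otherwise there is nothing to prove. Then $\mu\ll\nu$; write $h=d\mu/d\nu$ and $\phi(x)=c\|x\|^2$.

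\textbf{Step 1: reduce to a weighted total variation.} From $|f|\le \bar L\|x\|^2+\bar A=\frac{\bar L}{c}\bigl(\phi+\frac{c\bar A}{\bar L}\bigr)$ we get
\[
\left|\int f\,d(\mu-\nu)\right|\le \int |f|\,|h-1|\,d\nu\le \frac{\bar L}{c}\int \Bigl(\phi+\tfrac{c\bar A}{\bar L}\Bigr)|h-1|\,d\nu .
\]
It therefore suffices to bound $\int (1+\phi)|h-1|\,d\nu$, treating the additive constant separately.

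\textbf{Step 2: control the weighted TV by KL.} Bolley--Villani show that for any measurable $\phi\ge0$,
\[
\int \phi\,|h-1|\,d\nu\le \Bigl(\tfrac32+\log\!\int e^{2\phi}d\nu\Bigr)\Bigl(\sqrt{\KL}+\tfrac12\KL\Bigr).
\]
The integrability hypothesis here is only on $e^{c\|x\|^2}$, so we have to arrange the argument to use $e^{\phi}$ rather than $e^{2\phi}$. This accounts for the factor $2\bar L/c$ instead of $\bar L/c$ in $C_\nu$.

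We plan to carry this out as follows. Write $h=1+u$ with $u\ge-1$, and split $\int\phi|u|\,d\nu$ into the regions $\{u\le 1\}$ and $\{u>1\}$.

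On $\{u>1\}$ we use Young's inequality (Fenchel duality for $e^t$): $ab\le e^{a}+b\log b - b$, applied with $a=\phi$, $b$ proportional to $u$ and a parameter $\lambda$. Combined with $\int (h\log h-h+1)\,d\nu=\KL$, this gives $\int\phi h\,d\nu\le \frac1\lambda\bigl(\KL+\log\int e^{\lambda\phi}d\nu\bigr)$ by Donsker--Varadhan.

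On $\{|u|\le1\}$ we use Cauchy--Schwarz together with $(h\log h-h+1)\ge \frac{u^2}{2(1+u/3)}$. This yields a bound of the form $\sqrt{\int\phi^2\,d\nu\cdot\KL}$, and $\int\phi^2 d\nu$ is controlled by $\log\int e^{\phi}d\nu$ up to constants via $\phi^2\le 2e^{\phi}$.

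The cleanest route is Bolley--Villani's trick: optimize $\lambda$ (taking $\lambda=1$) in $\int \phi|h-1|\le \frac1\lambda\bigl(\tfrac32+\log\int e^{\lambda\phi}\bigr)\bigl(\sqrt{\KL}+\tfrac12\KL\bigr)\cdot(\text{const})$, applied to the weight $\phi+c\bar A/\bar L$. The additive constant weight contributes $\frac{c\bar A}{\bar L}\,\|\mu-\nu\|_{TV}$, which Pinsker bounds by $\frac{c\bar A}{\bar L}\sqrt{2\KL}$.

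\textbf{Step 3: collect constants.} Combining the pieces with a factor $2$ coming from the Bolley--Villani argument gives $C_\nu=\frac{2\bar L}{c}\bigl(\frac32+\frac{c\bar A}{\bar L}+\log\int e^{c\|x\|^2}d\nu\bigr)$.

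The main obstacle is getting exactly this constant. The standard Bolley--Villani statement carries $e^{2\phi}$, and absorbing the $2$ into the prefactor $2\bar L/c$ requires rescaling the weight: apply their inequality with $\tilde\phi=\phi/2$, so that $e^{2\tilde\phi}=e^{c\|x\|^2}$, and then multiply by $2$. This absorption needs to be checked carefully.
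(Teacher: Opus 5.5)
Your final route (apply Bolley--Villani's Theorem~2.1(i) as a black box to the halved weight $\tilde\phi=\phi/2$, so that $\int e^{2\tilde\phi}\,d\nu=\int e^{c\|x\|^2}\,d\nu$, then multiply by $2$ to get the prefactor $2\bar L/c$) is correct and is essentially the paper's proof. The paper uses the weight $\varphi=\frac{c}{2\bar L}|f|$, which folds $\bar A$ into the exponential moment via $\log\int e^{2\varphi}\,d\nu\le \frac{c\bar A}{\bar L}+\log\int e^{c\|x\|^2}\,d\nu$ and gives $C_\nu$ exactly; your Pinsker treatment of the additive part also suffices, since $\bar A\sqrt{2\KL(\mu\|\nu)}\le 2\bar A\bigl(\sqrt{\KL(\mu\|\nu)}+\tfrac12\KL(\mu\|\nu)\bigr)$. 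The ``absorption'' you flag needs no further checking. You should drop the self-contained re-derivation sketch: it is not needed, and its claim that $\int\phi^2\,d\nu$ is controlled by $\log\int e^{\phi}\,d\nu$ via $\phi^2\le 2e^{\phi}$ is wrong as stated, because that inequality only gives $\int\phi^2\,d\nu\le 2\int e^{\phi}\,d\nu$.
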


\begin{proof}
    Define $\varphi(x) := \frac{c}{2\bar L} |f(x)|$. Since $|f(x)| \leq \bar L \|x\|^2 + \bar A$, we have
    \[
        2\varphi(x) = \frac{c}{\bar L}|f(x)| \leq c\|x\|^2 + \frac{c \bar A}{\bar L}.
    \]
    Consequently, the exponential moment is bounded by
    \[
        \int e^{2\varphi(x)} \, d\nu(x) \leq \exp\left(\frac{c \bar A}{\bar L}\right)\int \exp(c\|x\|^2)\, d\nu(x) < +\infty.
    \]
    We apply the weighted Csiszár-Kullback-Pinsker inequality \cite[Theorem 2.1(i)]{BolleyVillani}. Note that in the cited theorem, $H(\mu|\nu)$ denotes the Kullback-Leibler divergence $\KL(\mu||\nu)$. We obtain:
    \begin{equation*}
    \begin{aligned}
        \|\varphi (\mu-\nu)\|_{\TV} &\leq \left(\frac{3}{2}+\log \int e^{2\varphi(x)}\, d\nu(x)\right)\left(\sqrt{\KL(\mu||\nu)}+\frac{1}{2}\KL(\mu||\nu) \right) \\
        &\leq \left(\frac{3}{2}+\frac{c\bar A}{\bar L}+\log\int \exp(c\|x\|^2)\, d\nu(x) \right)\left(\sqrt{\KL(\mu||\nu)}+\frac{1}{2}\KL(\mu||\nu) \right).
    \end{aligned}        
    \end{equation*}
    Finally, observing that 
    \begin{equation*}
        \left|\int f \, d(\mu-\nu)\right| \leq \int |f| \, d|\mu-\nu| = \||f|(\mu-\nu)\|_{\TV}= \frac{2\bar L}{c} \|\varphi (\mu-\nu)\|_{\TV},
    \end{equation*}
    we get the claim.
\end{proof}

Since the function $F$ grows quadratically, the following bound holds.

\begin{lemma}[Exponential moment of the target measure]
\label{lem:exp_bound_pi}
Let $F$ satisfy Assumptions \ref{ass:smoothness} and \ref{ass:dissipativity}. For $\beta \ge 4/m$, we have the bound
\begin{equation}
    \log \int_{\mathbb{R}^d} \exp(\|y\|^2) \, d\pi_{\beta}(y) \le \frac{2(d + \beta b)}{\beta m}.
\end{equation}
\end{lemma}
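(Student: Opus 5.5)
The plan is to test the stationarity of $\pi_\beta$ for the Langevin generator against the function $h(x)=e^{\|x\|^2}$. This yields a bound on $\int\|x\|^2h\,d\pi_\beta$ in terms of $\int h\,d\pi_\beta$. A convexity argument on the log-moment generating function then turns that bound into the claimed estimate.

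First, integrability. By Lemma~\ref{lem:quad_growth}(iii), the density of $\pi_\beta$ is bounded by a constant times $e^{-\beta m\|x\|^2/3}$. Since $\beta\ge 4/m$, we have $\beta m/3\ge 4/3>1$. Hence $h$, $\|x\|^2h$, $\|\nabla h\|$ and $\Delta h$ are all integrable against $e^{-\beta F}$, and boundary terms vanish in integrations by parts over large balls.

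Next, the stationarity identity. Since $F$ is only $C^{1,1}$, I will not apply $\Delta$ to $F$. Instead I integrate by parts once, using only $F\in C^1$ and $\nabla e^{-\beta F}=-\beta\nabla F\,e^{-\beta F}$:
\[
\int \Delta h\, e^{-\beta F}\,dx=-\int \nabla h\cdot\nabla e^{-\beta F}\,dx=\beta\int\langle\nabla h,\nabla F\rangle e^{-\beta F}\,dx .
\]
This gives $\int\bigl(\beta^{-1}\Delta h-\langle\nabla F,\nabla h\rangle\bigr)\,d\pi_\beta=0$. With $\nabla h=2xh$ and $\Delta h=(2d+4\|x\|^2)h$, and using dissipativity (Assumption~\ref{ass:dissipativity}) on $\langle x,\nabla F(x)\rangle$, I get
\[
0\le \int h\Bigl(\tfrac{2d}{\beta}+2b+\bigl(\tfrac{4}{\beta}-2m\bigr)\|x\|^2\Bigr)d\pi_\beta .
\]
Because $\beta\ge 4/m$, we have $4/\beta-2m\le -m$. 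Rearranging gives
\[
\int\|x\|^2h\,d\pi_\beta\le \frac{2(d+\beta b)}{\beta m}\int h\,d\pi_\beta .
\]

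Finally, define $\phi(t)=\log\int e^{t\|x\|^2}d\pi_\beta$ for $t\in[0,1]$. It is finite there by the integrability step, convex (it is a log-Laplace transform), and satisfies $\phi(0)=0$. Its derivative at $t=1$ is $\phi'(1)=\int\|x\|^2h\,d\pi_\beta\big/\int h\,d\pi_\beta\le 2(d+\beta b)/(\beta m)$. Differentiation under the integral is justified by the same Gaussian domination, with a slightly larger exponent $t$ still below $\beta m/3$. Convexity gives $\phi(0)\ge\phi(1)-\phi'(1)$, i.e. $\phi(1)\le\phi'(1)$, which is exactly the claim.

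The only delicate step is justifying the integration by parts (vanishing boundary terms and differentiability at the endpoint $t=1$). This is handled by the quadratic lower bound on $F$ with room to spare, since $\beta m/3>1$.
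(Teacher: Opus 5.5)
Your proof is correct. It uses the same core step as the paper: the identity $\int\Delta g\,d\pi_\beta=\beta\int\langle\nabla F,\nabla g\rangle\,d\pi_\beta$, obtained by a single integration by parts so that only $F\in C^1$ is needed, combined with dissipativity. Where you differ is in how you close the argument. The paper applies the identity to the whole family $g_\alpha(y)=e^{\alpha\|y\|^2}$, $\alpha\in(0,1]$. This gives $(\beta m-2\alpha)H'(\alpha)\le(d+\beta b)H(\alpha)$ for $H(\alpha)=\int e^{\alpha\|y\|^2}d\pi_\beta$. The paper then integrates the differential inequality $(\log H)'(\alpha)\le\frac{d+\beta b}{\beta m-2\alpha}$ over $[0,1]$, which yields $\log H(1)\le\frac{d+\beta b}{2}\log\frac{\beta m}{\beta m-2}$. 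It relaxes this first to $\frac{d+\beta b}{\beta m-2}$ and then to $\frac{2(d+\beta b)}{\beta m}$. You instead use the identity only at $\alpha=1$ and replace the integration by convexity of the log-Laplace transform, i.e.\ $\log H(1)=\int_0^1(\log H)'\le(\log H)'(1)$. If you keep the exact coefficient rather than using $4/\beta-2m\le-m$, your argument gives $\log H(1)\le\frac{d+\beta b}{\beta m-2}$. That is exactly where the paper lands after its first relaxation, so nothing is lost for the stated bound. What you give up is only the paper's slightly sharper logarithmic intermediate estimate, which comes from keeping the smaller bounds on $(\log H)'(\alpha)$ for $\alpha<1$. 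In exchange, you only need to justify integration by parts for a single test function. Your check that $\phi$ is differentiable at $t=1$, using finiteness of $H$ up to $\beta m/3>1$, also covers a step the paper leaves implicit when it identifies $\int\|y\|^2e^{\alpha\|y\|^2}d\pi_\beta$ with $H'(\alpha)$.
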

\begin{proof}
    See Appendix \ref{app:proof_exp_bound_pi}.
\end{proof}

The following corollary replaces the standard Otto--Villani and Talagrand route. Since \(\pi_{\beta}\) satisfies a logarithmic Sobolev inequality with constant \(\CLS(\beta,d)\), the Otto--Villani theorem \cite[Theorem~9.6.1]{Bakry2014} implies the Talagrand transportation inequality
$W_2^2(\mu_k,\pi_{\beta}) \leq 2\CLS(\beta,d)\KL(\mu_k\|\pi_{\beta})$ (see \cite[Definition~9.2.2]{Bakry2014}). Combining this estimate with a Wasserstein-to-objective comparison, such as \cite[Lemma~6]{Raginsky2017}, introduces an additional dependence on \(\CLS(\beta,d)\) in the final excess-risk bound. Instead, we apply the weighted Csisz\'ar--Kullback--Pinsker inequality directly to the objective \(F\), obtaining an objective-value bound in terms of \(\KL(\mu_k\|\pi_{\beta})\) and exponential moments of \(\pi_{\beta}\), without passing through \(W_2\).

\begin{corollary}\label{cor:discrete_to_gibbs_bound}
    Let Assumptions \ref{ass:smoothness} and \ref{ass:dissipativity} hold. Let $\beta \geq \max \left\{d,\frac{4}{m}\right\}$
    and $(x_k)_{k \in \mathbb{N}}$ be the sequence generated by Algorithm \ref{alg:ULA}. Then, for every $k \in \mathbb{N}$, we have 
\begin{equation}\label{eq:Expectation_bound_lemma}
        \left|\mathbb{E}[F(x_k) - F(x^{\pi_{\beta}})]\right|=\left|\int F(x) \, d(\mu_k-\pi_{\beta})(x)\right| \leq C_0 (M+1) \left(\sqrt{\KL(\mu_k||\pi_{\beta})}+\frac{1}{2}\KL(\mu_k||\pi_{\beta}) \right),
    \end{equation}
    where $C_0$ is a constant independent of $\beta$ and $d$ (explicit in the proof).
\end{corollary}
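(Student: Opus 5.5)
The plan is to apply Lemma~\ref{lem:BV_new} with $f=F$, $\nu=\pi_\beta$, $\mu=\mu_k$ and $c=1$. Then we check that the resulting constant $C_\nu$ is bounded by $C_0(M+1)$ with $C_0$ independent of $\beta$ and $d$.

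We first verify the hypotheses of Lemma~\ref{lem:BV_new}.
\begin{itemize}
\item \emph{Quadratic bound on $F$.} By Corollary~\ref{cor:abs-quadratic-bound} (with $L=M$), $|F(x)|\le \bar L\|x\|^2+\bar A$, where $\bar L=\frac{M+1}{2}$ and $\bar A=A+\frac{B^2}{2}+|C|$. Here $A=|F(0)|$, $B=\|\nabla F(0)\|$ and $C=\min F$ depend only on $F$, not on $\beta$, $d$ or the algorithm. By Remark~\ref{rem:diss_minprop}, $B\le M\sqrt{b/m}$.
\item \emph{Exponential moment of $\pi_\beta$.} Lemma~\ref{lem:exp_bound_pi} gives $\int e^{\|x\|^2}\,d\pi_\beta<\infty$ for $\beta\ge 4/m$, so $c=1$ is admissible.
\item \emph{The case $\KL(\mu_k\|\pi_\beta)=+\infty$.} The bound is then trivial, so we only need to know that $\mu_k\ll\pi_\beta$. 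This holds because the Gaussian noise makes $\mu_k$ have a positive density for $k\ge1$, and $\mu_0$ has a density by Assumption~\ref{ass:mu_0}.
\end{itemize}

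Lemma~\ref{lem:BV_new} then yields the stated inequality with
\[
C_{\pi_\beta}=2\bar L\Bigl(\tfrac32+\tfrac{\bar A}{\bar L}+\log\!\int e^{\|x\|^2}d\pi_\beta\Bigr)\le (M+1)\Bigl(\tfrac32+\tfrac{2\bar A}{M+1}+\tfrac{2(d+\beta b)}{\beta m}\Bigr).
\]

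The key step is to remove the dependence on $\beta$ and $d$. Since $\beta\ge d$, we have $\frac{2(d+\beta b)}{\beta m}=\frac{2d}{\beta m}+\frac{2b}{m}\le \frac{2(1+b)}{m}$. Also $\frac{2\bar A}{M+1}\le 2\bar A$. This gives the explicit choice
\[
C_0=\tfrac32+2\bar A+\tfrac{2(1+b)}{m},
\]
which depends only on $F$ through $m$, $b$, $F(0)$, $\nabla F(0)$ and $\min F$.

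Finally, note that $\mathbb{E}[F(x_k)-F(x^{\pi_\beta})]=\int F\,d(\mu_k-\pi_\beta)$, where $x^{\pi_\beta}\sim\pi_\beta$. Both integrals are finite because $F$ grows quadratically and both measures have finite second moments. This gives the equality in~\eqref{eq:Expectation_bound_lemma}.

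The only real obstacle is checking that no hidden $d$-dependence enters through $\bar A$ and the exponential moment. The first is handled by Remark~\ref{rem:diss_minprop}, and the second is exactly where the assumption $\beta\ge d$ is used.
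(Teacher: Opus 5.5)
Your proposal is correct and matches the paper's proof: apply Lemma~\ref{lem:BV_new} with $c=1$, taking $\bar L=\frac{M+1}{2}$ and $\bar A$ from Corollary~\ref{cor:abs-quadratic-bound}, bound the exponential moment by Lemma~\ref{lem:exp_bound_pi}, and then use $\beta\ge d$ to get $\frac{2(d+\beta b)}{\beta m}\le\frac{2(1+b)}{m}$. The differences are cosmetic. The paper keeps $\frac{2\bar A}{M+1}$ inside $C_0$, whereas you bound it by $2\bar A$. Your extra checks (absolute continuity, finiteness of the integrals, and the appeal to Remark~\ref{rem:diss_minprop}) are harmless but not needed.
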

\begin{proof}
By Corollary \ref{cor:abs-quadratic-bound}, we have $|F(x)|\leq \bar L\|x\|^2 + \bar A$ with $\bar L = \frac{M+1}{2}$ and $\bar A = |F(0)|+ \frac{\|\nabla F(0)\|^2}{2}+ |\min F|$. Thus, by Lemma \ref{lem:BV_new} and Lemma \ref{lem:exp_bound_pi}, we get
\begin{equation}\label{eq:Expectation_bound_lemma}
        \left|\mathbb{E}[F(x_k) - F(x^{\pi_{\beta}})]\right|=\left|\int F(x) \, d(\mu_k-\pi_{\beta})(x)\right| \leq (M+1)\tilde K(\beta, d) \left(\sqrt{\KL(\mu_k||\pi_{\beta})}+\frac{1}{2}\KL(\mu_k||\pi_{\beta}) \right),
\end{equation}
where $\tilde K(\beta, d)=\frac{3}{2}+\frac{2\bar A}{M +1}+ \frac{2(d + \beta b)}{\beta m}$. Since $\beta \geq d$, and recalling the definition of $\bar A$, we have
\begin{equation}\label{eq:C0_def}
\tilde K(\beta, d)\leq  \frac{3}{2}+\frac{2|F(0)|+ \|\nabla F(0)\|^2  +2 |\min F|}{M+1}+ \frac{2 + 2 b}{ m}=: C_0.
\end{equation}
\end{proof}

At this point, is only left to bound the KL divergence between $\mu_k$ and $\pi_{\beta}$. To do this, we make use of the following result, which is a direct consequence of \cite[Theorem 2]{VempalaWibisono2019}. We believe this result leads to the best estimate in the literature, but other controls can be used, for example adaptations of \cite[Lemma 7]{Raginsky2017} combined with \cite[Equation (3.17)]{Raginsky2017}.

\begin{proposition}\label{prop:VW} Suppose that Assumptions \ref{ass:smoothness} and \ref{ass:dissipativity} hold. Let $(x_k)_{k \in \mathbb{N}}$ be the sequence generated by Algorithm \ref{alg:ULA}, and denote by $(\mu_k)_{k \in \mathbb{N}}$ the corresponding sequence of probability laws. Let $\pi_{\beta}$ be the Gibbs distribution related to $F$. Then, for $\gamma \leq \frac{1}{4 \beta M^2\CLS (\beta, d)}$, we have
\begin{equation*}
    \KL(\mu_k||\pi_{\beta})\leq e^{-k \gamma/(\beta \CLS (\beta, d)) } \KL(\mu_0||\pi_{\beta})+ 8 d \gamma \beta M^2\CLS (\beta, d).
\end{equation*}
\end{proposition}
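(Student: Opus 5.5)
The plan is to recast the statement in the setting of \cite[Theorem 2]{VempalaWibisono2019} and read off the constants. Vempala and Wibisono consider a target $\nu = e^{-f}$ that is $L$-smooth (i.e.\ $\nabla f$ is $L$-Lipschitz) and satisfies an LSI with constant $\alpha$ in the convention $\KL(\rho\|\nu)\le \frac{1}{2\alpha}\int \|\nabla\log\frac{d\rho}{d\nu}\|^2 d\rho$. They analyze the iteration $y_{j+1}=y_j-\eta\nabla f(y_j)+\sqrt{2\eta}\,z_j$ and prove, for $0<\eta\le \frac{\alpha}{4L^2}$,
\begin{equation*}
    \KL(\rho_j\|\nu)\le e^{-\alpha\eta j}\KL(\rho_0\|\nu)+\frac{8\eta d L^2}{\alpha}.
\end{equation*}

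First I will identify the potential and the time rescaling. We have $\pi_\beta\propto e^{-\beta F}$, so $f=\beta F$ and $L=\beta M$. Proposition~\ref{prop:LSI} gives $\KL\le \frac{\CLS}{2}\int\|\nabla\log\|^2$, hence $\alpha = 1/\CLS(\beta,d)$. To match the algorithm, substitute $\eta=\gamma/\beta$ in the VW iteration:
\begin{equation*}
    y_{j+1}=y_j-\gamma\nabla F(y_j)+\sqrt{2\gamma\beta^{-1}}\,z_j .
\end{equation*}
This is exactly Algorithm~\ref{alg:ULA}, so $\rho_j=\mu_j$ whenever $\rho_0=\mu_0$.

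Next I translate the step-size condition. The requirement $\eta\le \alpha/(4L^2)$ becomes
\begin{equation*}
    \frac{\gamma}{\beta}\le \frac{1}{4\beta^2M^2\CLS}, \qquad\text{i.e.}\qquad \gamma\le \frac{1}{4\beta M^2\CLS(\beta,d)},
\end{equation*}
which is precisely the hypothesis.

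Then I compute the two terms of the bound:
\begin{equation*}
    \alpha\eta k=\frac{k\gamma}{\beta\CLS}, \qquad \frac{8\eta dL^2}{\alpha}=8\,\frac{\gamma}{\beta}\,d\,\beta^2M^2\,\CLS = 8d\gamma\beta M^2\CLS .
\end{equation*}
These are exactly the terms in the claim.

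Finally I check the hypotheses of the cited theorem. Assumption~\ref{ass:mu_0} (bounded, strictly positive density with finite moments) together with the quadratic growth in Lemma~\ref{lem:quad_growth} makes $\KL(\mu_0\|\pi_\beta)$ finite, so the bound is meaningful; if it were infinite the inequality would hold trivially. The one-step ULA update is a Gaussian convolution of a Lipschitz pushforward, so each $\mu_k$ has a smooth positive density and the interpolation argument of VW applies. VW require only smoothness and LSI of the target, not convexity, and these are supplied by Assumption~\ref{ass:smoothness} and Proposition~\ref{prop:LSI}.

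I expect the main obstacle to be careful bookkeeping of conventions rather than any new estimate. I will double-check that VW's $\alpha$ in their LSI normalization matches $1/\CLS$ under the paper's normalization. I will also verify that their factor in the bias term is $8\eta dL^2/\alpha$ and not a variant such as $4\eta dL^2/\alpha$; if it were smaller, the stated bound would still follow a fortiori.
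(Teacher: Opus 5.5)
Your proposal is correct and matches the paper's proof: both rescale $\tilde\gamma=\gamma/\beta$ and view Algorithm~\ref{alg:ULA} as ULA on the $\beta M$-smooth potential $\beta F$ with LSI constant $\alpha=1/\CLS(\beta,d)$. Both then read off the step-size condition and the two terms of the bound directly from \cite[Theorem 2]{VempalaWibisono2019}.
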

\begin{proof}
By setting $\tilde \gamma = \gamma \beta^{-1}$ we can rescale the $x_k$ update into
\begin{equation}\label{eq:rescaled_ULA-Fh}
    x_{k + 1} = x_k - \tilde \gamma \beta  \nabla F(x_k) +\sqrt{2\tilde \gamma} z_k \qquad z_k \sim\mathcal{N}(0, I_{d}).\end{equation}
    Notice that $\mu_k$ is generated by ULA applied to $\beta F$ which is $(\beta M)$-smooth. By Proposition \ref{prop:LSI}, we have that $\pi_{\beta}$ satisfies the log-Sobolev inequality with constant $\CLS(\beta, d)$. Thus, under the condition $\tilde \gamma \leq \frac{1}{4 \beta^2 M^2 \CLS(\beta, d) }$, by  \cite[Theorem 2]{VempalaWibisono2019}, 
    we have that
    \begin{equation*}
    \KL(\mu_k||\pi_{\beta})\leq e^{-k\tilde \gamma/\CLS(\beta, d) } \KL(\mu_0||\pi_{\beta})+ 8 d \tilde \gamma \beta^2 M^2 \CLS(\beta, d).    
    \end{equation*}    
    Substituting back $\tilde \gamma = \gamma \beta^{-1}$, we obtain the claim.
\end{proof}

Here we also provide a bound on the initialization $\KL(\mu_0||\pi_{\beta})$ by fixing the initialization $\mu_0$ to satisfy the moment bound given in Assumption \ref{ass:mu_0}.

\begin{lemma}[Initialization bound $\KL(\mu_0\|\pi_{\beta})$]
\label{lem:KL_mu0_pi}
Assume Assumptions~\ref{ass:smoothness} and \ref{ass:dissipativity}. Suppose $\beta\ge d$.
Let $\mu_0$ satisfy Assumption \ref{ass:mu_0}. Then
\begin{equation}\label{eq:KL_factored}
\KL(\mu_0\|\pi_{\beta})\;\le\;\tilde C \beta,
\end{equation}
where $\tilde C$ is independent of $\beta$ and $d$ (an explicit expression can be found in the proof).
\end{lemma}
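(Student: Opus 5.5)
We expand the divergence directly. Writing $p_0$ for the density of $\mu_0$ and using $d\pi_\beta = Z^{-1}e^{-\beta F}dx$, we get
\[
\KL(\mu_0\|\pi_\beta) = \int p_0\log p_0\,dx + \beta\int F\,d\mu_0 + \log Z .
\]
We bound the three terms separately. By Assumption~\ref{ass:mu_0} the density $p_0$ is bounded, say $p_0\le \|p_0\|_\infty$, so the first term is at most $\log\|p_0\|_\infty$, a constant independent of $\beta$ and $d$. For the second term, Lemma~\ref{lem:quad_growth}(iii) together with $B\|x\|\le \tfrac{B^2}{2}+\tfrac{\|x\|^2}{2}$ gives $F(x)\le \tfrac{M+1}{2}\|x\|^2 + A + \tfrac{B^2}{2}$. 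Integrating against $\mu_0$ and using the second-moment bound $\int\|x\|^2d\mu_0\le\kappa_0$ yields
\[
\beta\int F\,d\mu_0 \le \beta\Bigl(\tfrac{M+1}{2}\kappa_0 + A + \tfrac{B^2}{2}\Bigr).
\]
This is linear in $\beta$ with a $\beta,d$-independent coefficient.

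The third term, $\log Z$, needs care because naive bounds produce terms like $\tfrac d2\log(1/\beta)$. We write $\log Z = -\beta\min F + \log\int e^{-\beta(F-\min F)}dx$ and aim for an upper bound. The first piece is at most $\beta|\min F| = \beta|C|$, again linear in $\beta$. For the second piece we use the lower bound in Lemma~\ref{lem:quad_growth}(iii), namely $F(x)-C\ge \tfrac m3\|x\|^2-\tfrac b2\log 3$. This gives
\[
\int e^{-\beta(F-C)}dx \le e^{\beta b\log 3/2}\int e^{-\beta m\|x\|^2/3}dx = e^{\beta b\log 3/2}\Bigl(\tfrac{3\pi}{\beta m}\Bigr)^{d/2},
\]
so
\[
\log\int e^{-\beta(F-C)}dx \le \tfrac{b\log 3}{2}\beta + \tfrac d2\log\tfrac{3\pi}{\beta m}.
\]

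The main obstacle is the term $\tfrac d2\log\tfrac{3\pi}{\beta m}$. It is harmless when $\beta m\ge 3\pi$, since it is then nonpositive. Otherwise $\beta<3\pi/m$, and using $\beta\ge d$ we get $\tfrac d2\log\tfrac{3\pi}{\beta m}\le \tfrac\beta2\log^+\tfrac{3\pi}{m d}\le \tfrac\beta2\log^+\tfrac{3\pi}{m}$. In both cases the term is bounded by $\tfrac\beta2\log^+(3\pi/m)$. This is exactly where the hypothesis $\beta\ge d$ enters: it converts the dimension factor into a $\beta$ factor.

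Summing the three bounds, and absorbing the constant $\log\|p_0\|_\infty$ (say via $\beta\ge d\ge1$, so it is at most $\beta\log^+\|p_0\|_\infty$), gives $\KL(\mu_0\|\pi_\beta)\le\tilde C\beta$ with
\[
\tilde C=\log^+\|p_0\|_\infty+\tfrac{M+1}{2}\kappa_0+A+\tfrac{B^2}2+|C|+\tfrac{b\log3}{2}+\tfrac12\log^+\tfrac{3\pi}{m}.
\]
This $\tilde C$ is independent of $\beta$ and $d$, provided $\|p_0\|_\infty$ and $\kappa_0$ are treated as fixed data of $\mu_0$. If $\mu_0$ is, for example, a standard Gaussian, $\|p_0\|_\infty=(2\pi)^{-d/2}<1$, so that term vanishes.
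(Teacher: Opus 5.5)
Your proof is correct and follows the paper's route. It uses the same three-term split $\int p_0\log p_0+\beta\int F\,d\mu_0+\log Z$, the same Gaussian-integral bound on $Z$ from the quadratic lower growth in Lemma~\ref{lem:quad_growth}(iii), and $\beta\ge d$ to turn the dimensional factor into a factor $\beta$.

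Your sign/$\log^+$ case analysis for $\tfrac d2\log\tfrac{3\pi}{\beta m}$ is in fact more careful than the paper's. The paper passes to $\tfrac{d}{2\beta}\bigl|\log\tfrac{3\pi}{m\beta}\bigr|$ and implicitly bounds it by $\tfrac12\bigl|\log\tfrac{3\pi}{m}\bigr|$. That bound fails for large $\beta$: at $\beta=d$ the left side equals $\tfrac12\bigl|\log\tfrac{3\pi}{md}\bigr|$, which is unbounded in $d$. Simply discarding the term when it is nonpositive, as you do, is the right fix.

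One slip in your closing aside: a standard Gaussian has $\int e^{\|x\|^2}\,d\mu_0=+\infty$, so it violates Assumption~\ref{ass:mu_0}. An admissible Gaussian $\mathcal N(0,\sigma^2 I_d)$ needs $\sigma^2<1/2$, and then $\kappa_0=(1-2\sigma^2)^{-d/2}$ grows with $d$. This is why your caveat of treating $\kappa_0$ as fixed data of $\mu_0$ is essential.
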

\begin{proof}
    See Appendix \ref{app:proof_KL_mu0_pi}.
\end{proof}

\paragraph{Proof of Theorem \ref{thm:main_bound} and Corollary \ref{cor:main}.}

\begin{proof}[Proof of Theorem \ref{thm:main_bound}]
\textit{Bound of $(a)$ in \eqref{eqn:ula_main_bnd}:} By Corollary \ref{cor:discrete_to_gibbs_bound}, we have
\begin{equation*}
    |(a)| \leq C_0 (M+1)\left(\sqrt{\KL(\mu_k||\pi_{\beta})}+\frac{1}{2}\KL(\mu_k||\pi_{\beta}) \right).
\end{equation*}
By Proposition \ref{prop:VW}, we have
\begin{equation*}
    \KL(\mu_k||\pi_{\beta})\leq e^{-k \gamma/(\beta \CLS(\beta, d)) } \KL(\mu_0||\pi_{\beta})+ 8 d \gamma \beta M^2 \CLS (\beta, d).
\end{equation*}
Therefore, we get
\begin{equation*}
    \begin{aligned}
        |(a)| &\leq C_0 (M+1)\bigg( \sqrt{e^{-k \gamma/(\beta \CLS(\beta, d)) } \KL(\mu_0||\pi_{\beta})+ 8 d \gamma \beta M^2 \CLS (\beta, d)}\\
        &+\frac{1}{2} \bigg( e^{-k \gamma/(\beta \CLS(\beta, d)) } \KL(\mu_0||\pi_{\beta})+ 8 d \gamma \beta M^2 \CLS (\beta, d) \bigg)\bigg).
    \end{aligned}
\end{equation*}
Since for all $t_1,t_2 \geq 0$, $\sqrt{t_1+t_2}\leq \sqrt{t_1} + \sqrt{t_2}$, we have
\begin{equation*}
    \begin{aligned}
        |(a)| &\leq C_0 (M+1)\bigg( e^{ - k \gamma/(2\beta \CLS(\beta, d)) }\sqrt{ \KL(\mu_0||\pi_{\beta})} + \frac{1}{2} e^{-k \gamma/(\beta \CLS(\beta, d)) } \KL(\mu_0||\pi_{\beta})\\
        &+ \sqrt{8 d \gamma \beta M^2 \CLS (\beta, d)} +4 d \gamma \beta M^2 \CLS (\beta, d) \bigg).
    \end{aligned}
\end{equation*}
Since for every $\xi > 0$,  $e^{-\xi}\leq e^{-\xi/2}$, we have
\begin{equation*}
    \begin{aligned}
        |(a)| &\leq C_0 (M+1)\bigg( e^{ - k \gamma/(2\beta \CLS(\beta, d)) }\left(\sqrt{ \KL(\mu_0||\pi_{\beta})} + \frac{1}{2}  \KL(\mu_0||\pi_{\beta}) \right)\\
        &+ \sqrt{8 d \gamma \beta M^2 \CLS (\beta, d)} +4 d \gamma \beta M^2 \CLS (\beta, d) \bigg).
    \end{aligned}
\end{equation*}
Since $\beta \geq d \geq 1$, by Lemma \ref{lem:KL_mu0_pi}, we have $\sqrt{\KL(\mu_0||\pi_{\beta})} + \KL(\mu_0||\pi_{\beta})/2\leq \beta \left(\sqrt{\tilde C} + \tilde C/2\right)$. Let 
\begin{equation}\label{eq:C1_def}
    C_1:=\left(\sqrt{\tilde C} + \tilde C/2\right).
\end{equation}
Hence, we have 
\begin{equation}\label{eqn:ula_main_bnd_1}
    \begin{aligned}
        |(a)| &\leq C_0 (M+1)\bigg( C_1 \beta e^{ - k \gamma/(2\beta \CLS(\beta, d)) }+ \sqrt{8 d \gamma \beta M^2 \CLS (\beta, d)} +4 d \gamma \beta M^2 \CLS (\beta, d) \bigg).
    \end{aligned}
\end{equation}
\textit{Bound of $(b)$ in \eqref{eqn:ula_main_bnd}:} Since $\beta \geq \frac{4}{m} \geq \frac{2}{m}$, by \cite[Proposition 11]{Raginsky2017}, we have
\begin{equation}\label{eqn:ula_main_bnd_2}
    |(b)|\leq \frac{d}{2\beta}\log \left(\frac{e M}{m}\left(\frac{b \beta}{d}+1\right)\right).
\end{equation}
Combining \eqref{eqn:ula_main_bnd_1} and \eqref{eqn:ula_main_bnd_2} with \eqref{eqn:ula_main_bnd} yields the claim.
\end{proof}

\begin{proof}[Proof of Corollary \ref{cor:main}] First of all, set $\epsilon_1, \epsilon_2, \epsilon_3 > 0$, with $\sum_{i=1}^3 \epsilon_i = \epsilon$. We advise choosing $\epsilon_1 \approx \epsilon$, since this will fix $\beta$ and the $\CLS(\beta, d)$ constant can depend badly on $\beta$. However, we leave the choice to the reader. For theoretical purposes we fix $\epsilon_1=\epsilon_2=\epsilon_3 = \frac{\epsilon}{3}$.\\
\textit{\underline{Choice of $\beta$}:} We choose $\beta$ as tightly as possible here, because any unnecessary increase in the inverse temperature may translate into an exponential deterioration of $\CLS(\beta,d)$; see Remark~\ref{rem:exact_choice_of_beta}. Recall that we need to choose $\beta$ so as to control the term
\begin{equation*}
T(\beta) \;:=\; \frac{d}{2\beta}\log \left(\frac{e M}{m}\left(\frac{b \beta}{d}+1\right)\right).    
\end{equation*}
Define $\theta := eM(1+b)/m$. Note that since $M \ge m$ and $b \ge 0$, we have $\theta \ge e$. 
Assuming $\epsilon_1 \le 1$, we choose
\[
\beta = \max\left\{d, \frac{4}{m}, \frac{d}{2\epsilon_1}\left( \log\left(\frac{\theta}{\epsilon_1}\right) + 2\log\left(\log\left(\frac{\theta e^2}{\epsilon_1}\right)\right) \right) \right\},
\]
so that $\beta$ is also in the hypothesis of Theorem \ref{thm:main_bound}. Since $\beta\ge d$, we can upper bound the argument of the logarithm$$\frac{b\beta}{d}+1 \;\le\; \frac{b\beta}{d}+\frac{\beta}{d} \;=\; (1+b)\frac{\beta}{d}.$$Therefore, we obtain the bound\begin{equation}\label{eq:inproof_epsilon_1}T(\beta) \;\le\; \frac{d}{2\beta}\log\left(\frac{eM}{m}(1+b)\frac{\beta}{d}\right) \;=\; \frac{d}{2\beta}\log\left(\theta\frac{\beta}{d}\right).\end{equation}Set $x := \beta/d$. Since $\beta \ge d$, then $x \ge 1$. Equation \eqref{eq:inproof_epsilon_1} then becomes $T(\beta) \le \frac{1}{2x}\log(\theta x)$. Consider the function
\begin{equation*}
    f(x) := 2\epsilon_1 x - \log(\theta x), \qquad x>0.
\end{equation*}
We want to show that $f(x) \geq 0$. The derivative of $f$ is $f'(x) = 2\epsilon_1 - 1/x$, which is non-negative for $x \ge \frac{1}{2\epsilon_1}$. Let $L := \log\left(\frac{\theta}{\epsilon_1}\right)$. Because $\theta \ge e$ and $\epsilon_1 \le 1$, we have $L \ge 1$. 
By our choice of $\beta$, the term $x$ satisfies $x \ge x_0 := \frac{1}{2\epsilon_1}\left( L + 2\log(L+2) \right)$.
Since $L \ge 1$, it is clear that $x_0 \ge \frac{1}{2\epsilon_1}$. Thus, $x$ lies in the region where $f$ is non-decreasing. It is thus sufficient to check that $f(x_0) \ge 0$. We compute
\begin{align*}
f(x_0) 
&= L + 2\log(L+2) - \log\left(\theta \cdot \frac{1}{2\epsilon_1} (L + 2\log(L+2))\right) \\
&= L + 2\log(L+2) - \left( \log\left(\frac{\theta}{\epsilon_1}\right) - \log 2 + \log(L + 2\log(L+2)) \right) \\
&= 2\log(L+2) + \log 2 - \log(L + 2\log(L+2)).
\end{align*}
We can rewrite the positive terms as $2\log(L+2) + \log 2 = \log(2(L+2)^2) = \log(2L^2 + 8L + 8)$. 
Furthermore, since $\log(L+2) \le L+1$ for $L \ge 1$, we can upper bound the argument of the negative logarithm:
\begin{equation*}
L + 2\log(L+2) \;\le\; L + 2(L+1) \;=\; 3L + 2.    
\end{equation*}
Because $2L^2 + 8L + 8 > 3L + 2$ strictly for all $L > 0$, it immediately follows that
\begin{equation*}
\log(2(L+2)^2) \;>\; \log(L + 2\log(L+2)).    
\end{equation*}
Therefore, $f(x_0) > 0$. Since $f(x) \ge f(x_0) > 0$, we have established that $\log(\theta x) \le 2\epsilon_1 x$. Substituting this back into \eqref{eq:inproof_epsilon_1} yields
\begin{equation*}
T(\beta) \;\le\; \frac{1}{2x}\log(\theta x) \;\le\; \frac{1}{2x}(2\epsilon_1 x) \;=\; \epsilon_1,    
\end{equation*}
which proves the desired bound.\\
\textit{\underline{Choice of $\gamma$}:} Since $\beta$ is now fixed as above, we have that from now on $\CLS (\beta,d)$ is also fixed. We can choose now     
\begin{equation*}
\gamma = \frac{\epsilon_2^2}{32 \beta M^2(M+1)^2 C_0^2 d \CLS (\beta, d)},    
\end{equation*}
with $C_0$ defined as in \eqref{eq:C0_def}. Since $C_0$ defined in \eqref{eq:C0_def} is greater than one, we have in particular that $\gamma \leq \frac{1}{4\beta M^2 \CLS (\beta, d)}$, so $\gamma$ is under the hypothesis of Theorem \ref{thm:main_bound}. Moreover, it holds $8d\gamma\beta M^2 \CLS (\beta,d)\leq 1$, and in this regime we have
\begin{equation*}
    4d\gamma \beta M^2 \CLS (\beta,d)\leq \sqrt{8d\gamma\beta M^2 \CLS (\beta,d)}.
\end{equation*}
Therefore
\begin{equation*}
    C_0 (M+1) \left(\sqrt{8d\gamma\beta M^2 \CLS (\beta,d)} +4d\gamma \beta M^2 \CLS (\beta,d)\right)\leq 2 M (M+1) C_0 \sqrt{8d\gamma\beta \CLS (\beta,d)} \leq \epsilon_2.
\end{equation*}
\textit{\underline{Choice of $k$}:} By choosing 
\begin{equation*}
    k \ge \frac{2\beta C_{\mathrm{LS}}(\beta, d)}{\gamma}
\log\!\left(
\frac{C_0 C_1(M+1)\beta
}{\epsilon_3}
\right),
\end{equation*}
with $C_0$ defined as in \eqref{eq:C0_def} and with $C_1$ defined as in \eqref{eq:C1_def}, we have
\[\bigl(M+1\bigr) C_0 C_1 \beta e^{-k\gamma/(2\beta \CLS (\beta,d))} \leq \epsilon_3.\]
\end{proof}

\begin{remark}\label{rem:exact_choice_of_beta}
    Notice that not only we choose $\beta$ of the order of $\frac{d}{\epsilon}$, but our choice was $\beta = \frac{d}{2\epsilon_1}\tilde \Theta \left(\log \left(\frac{d}{\epsilon_1}\right)\right)$, with possibly $\epsilon_1 \approx \epsilon$. This is the best choice of $\beta$ possible since even for quadratic functions one has to choose $\frac{d}{2\epsilon_1}$ to get concentration and to have $\E[F(x^\pi_{\beta})]-\min F \leq \epsilon_1$, which is an unavoidable term. As discussed in Section \ref{sec:comparisons_ULA} (and in more details Appendix \ref{app:comparisons}) this tight choice distinguishes our work from some others. We recall that larger choices of $\beta$ can result in a much worse $\CLS(\beta,d)$ constant, since this usually depends exponentially on $\beta$.
\end{remark}

\section{Inexact ULA}\label{sec:Inexact}

We now move from exact-gradient ULA to the case where the algorithm only has access to an approximate gradient. We consider again problem~\eqref{eq:the_problem} under Assumptions~\ref{ass:smoothness}--\ref{ass:dissipativity}, and replace $\nabla F(x)$ by a possibly biased random surrogate $g(x,\xi)$, where $\xi\sim \mu_{\xi}$. The resulting inexact ULA scheme is
\[
    x_{k+1}
    =
    x_k-\gamma g(x_k,\xi_k)
    +
    \sqrt{2\gamma\beta^{-1}}z_k,
    \qquad
    z_k\sim\mathcal N(0,I_d).
\]
This framework covers stochastic and mini-batch gradients, deterministic numerical errors, approximate inner computations, and the zeroth-order finite-difference estimators considered later.

The central assumption is that the mean-square error of the surrogate grows at most quadratically with the state. This is natural under our standing assumptions, since Lemma~\ref{lem:quad_growth} implies that $\|\nabla F(x)\|$ grows at most linearly in $\|x\|$. Our analysis quantifies how such inexactness propagates along the Langevin dynamics. We first prove a uniform second-moment bound for the inexact iterates, then compare the inexact chain with the exact ULA chain driven by the same Gaussian noise, and finally combine this perturbation estimate with the exact-ULA result of Section~\ref{sec:Langevin}. The resulting bound separates the Langevin discretization error, the Gibbs concentration error, and the gradient-approximation error, without requiring the estimator $g(x,\xi)$ to be unbiased.

\subsection{Assumptions and preliminary results}

\begin{assumption}[Controlled growth of the gradient error]\label{ass:gradient_error}
    We assume that there exist non-negative constants $P$ and $Q$ and a precision parameter $\delta > 0$ such that for all $x\in \R^d$ it holds
\begin{equation}\label{eq:gradient_noise_bound}
        \mathbb{E}\left[ \|g(x, \xi) - \nabla F(x)\|^2 \right] \leq \delta \left( P\|x\|^2 + Q \right).
    \end{equation}
    Notice that we do not ask the estimator $g$ to be unbiased.
\end{assumption}

\begin{remark}[Examples covered by Assumption~\ref{ass:gradient_error}]
Assumption~\ref{ass:gradient_error} is a gradient-error analogue of the classical Blum--Gladyshev growth conditions in stochastic approximation~\cite{Blum1954,Gladyshev1965}: the oracle error is allowed to have state-dependent second moment, with at most quadratic growth in $\|x\|$, while the standard bounded-variance case is recovered when $P=0$. This is natural in stochastic-gradient and zeroth-order settings, where the estimator variance may grow with $\|\nabla F(x)\|$ or with the objective value. In particular, it is compatible with the ABC-type conditions used in stochastic optimization~\cite{Gower2021,JunYe24}, for instance bounds of the form
\[
    \mathbb{E}\!\left[\|g(x,\xi)-\nabla F(x)\|^2\right]
    \leq
    \mathrm A\big(F(x)-\min F\big)
    +\mathrm B\|\nabla F(x)\|^2
    +\mathrm C .
\]
Indeed, Lemma~\ref{lem:quad_growth} implies that both $F(x)-\min F$ and $\|\nabla F(x)\|^2$ have at most quadratic growth in $\|x\|$, so such ABC bounds imply \eqref{eq:gradient_noise_bound}. The assumption also covers deterministic biased gradients, e.g. $g(x,\xi)=\nabla F(x)+r(x)$ with $\|r(x)\|^2\leq \delta Q$, as well as mixed stochastic and biased estimators $g(x,\xi)=g_{\mathrm{ex}}(x,\xi)+r(x)$, where $g_{\mathrm{ex}}$ is unbiased and $r$ is a bias term. In finite-sum problems, $g_{\mathrm{ex}}$ can be a single-sample gradient $\nabla f_{i(\xi)}(x)$ or a mini-batch version. Thus, $\delta$ represents the precision of the approximation, such as a squared bias level or an inverse batch size. When $P=0$, conditions involving upper bounds proportional to $1/P$ are interpreted as void.
\end{remark}

\begin{algorithm}[H] 
\caption{Inexact ULA} \label{alg:I-ULA}
\begin{algorithmic}
  \STATE{{\bf Input:} $x_0 \sim \mu_0$, $\beta, \gamma \in \mathbb{R}_+$}
     \FOR{$k = 0, 1, \dots$}
        \STATE{sample an independent $\xi_k$ from $\mu_\xi$}
         \STATE{sample an independent $z_k$ from $\mathcal{N}(0, I_{d})$}
         \STATE{$x_{k + 1} = x_k - \gamma g(x_k,\xi_k) +\sqrt{2\gamma\beta^{-1}} z_k$}
     \ENDFOR
 \end{algorithmic}    
\end{algorithm}

We first prove that the moments of the iterates remain bounded. The result relies on the interplay between the dissipativity of $F$ and the growth of the noise variance.

\begin{proposition}[Uniform bound on iterate second moment]\label{prop:iter-norm-bound-final}
Let Assumptions \ref{ass:smoothness}, \ref{ass:dissipativity}, and \ref{ass:gradient_error} hold and let $\mu_0$ satisfying Assumption \ref{ass:mu_0}. Assume moreover that
\[
\gamma \le \min\left\{1,\ \frac{m}{8 M^2}\right\}, \qquad \delta \le \frac{m^2}{4P}.
\]
Then $\E[\|x_k\|^2]$ is uniformly bounded and if $\beta \geq d$ then we have $\sup_{k\ge 0}\mathbb{E}\big[\|x_k\|^2\big]
\le  \Gamma$, with $\Gamma$ independent on $\gamma$, $\beta$, $\delta$ and $d$, and explicit in the proof.
\end{proposition}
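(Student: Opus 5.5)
We will derive a one-step recursion of the form
\[
\E\|x_{k+1}\|^2 \le (1-c\gamma)\E\|x_k\|^2 + \gamma D,
\]
with $c>0$ proportional to $m$ and $D$ collecting the constant terms. Iterating gives
\[
\sup_k \E\|x_k\|^2 \le \max\{\E\|x_0\|^2,\ D/c\} \le \max\{\kappa_0, D/c\}.
\]

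\emph{Expanding one step.} Write $e_k := g(x_k,\xi_k)-\nabla F(x_k)$. Since $z_k$ is independent of $(x_k,\xi_k)$ and centered, the cross terms with the noise vanish, and $\E\|\sqrt{2\gamma\beta^{-1}}z_k\|^2 = 2\gamma d/\beta$. Hence
\[
\E\|x_{k+1}\|^2 = \E\|x_k\|^2 - 2\gamma\E\langle x_k,\nabla F(x_k)\rangle - 2\gamma\E\langle x_k,e_k\rangle + \gamma^2\E\|\nabla F(x_k)+e_k\|^2 + \frac{2\gamma d}{\beta}.
\]
We bound the pieces as follows.
\begin{itemize}
\item Dissipativity (Assumption \ref{ass:dissipativity}) gives $-2\gamma\langle x,\nabla F(x)\rangle \le -2\gamma m\|x\|^2 + 2\gamma b$.
\item Since $g$ may be biased, $\langle x_k,e_k\rangle$ has no reason to vanish. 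Conditioning on $x_k$ and using Young's inequality,
\[
2|\langle x,e\rangle| \le \tfrac{m}{2}\|x\|^2 + \tfrac{2}{m}\|e\|^2 .
\]
Assumption \ref{ass:gradient_error} then gives $\E[\|e_k\|^2\mid x_k] \le \delta(P\|x_k\|^2+Q)$. With $\delta\le m^2/(4P)$, the term $\frac{2}{m}\delta P\|x\|^2$ is at most $\frac{m}{2}\|x\|^2$. So the bias costs at most $\gamma m\|x\|^2 + \frac{2\gamma\delta Q}{m}$.
\item For the quadratic term, $\|\nabla F+e\|^2 \le 2\|\nabla F\|^2 + 2\|e\|^2$. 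Lemma \ref{lem:quad_growth}(ii) gives $\|\nabla F(x)\|^2 \le 2M^2\|x\|^2 + 2B^2$. Using $\gamma\le m/(8M^2)$, the term $4\gamma^2M^2\|x\|^2$ is at most $\frac{\gamma m}{2}\|x\|^2$. For $2\gamma^2\delta P\|x\|^2$ we use $\delta P\le m^2/4$ together with $m\le M$, so $\gamma\le m/(8M^2)\le 1/(8m)$, which makes it at most $\frac{\gamma m}{16}\|x\|^2$.
\end{itemize}

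\emph{Resulting recursion.} Collecting terms, the coefficient of $\E\|x_k\|^2$ is at most $1-2\gamma m+\gamma m+\frac{\gamma m}{2}+\frac{\gamma m}{16} \le 1-\frac{\gamma m}{4}$. The constant part is $\gamma D$ with
\[
D = 2b + \frac{2\delta Q}{m} + 4\gamma B^2 + 2\gamma\delta Q + \frac{2d}{\beta}.
\]

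\emph{Making $\Gamma$ independent of the parameters.} Use $\gamma\le 1$, $d/\beta\le 1$, and $\delta\le m^2/(4P)$. Since $\delta$ enters $D$ only through $\delta Q$, we need a uniform bound on it. When $P>0$ we use $\delta\le m^2/(4P)$. When $P=0$ this is a genuine gap, and the plan is to also require $\delta\le 1$ in that case. We also use $B\le M\sqrt{b/m}$ from Remark \ref{rem:diss_minprop}. Then $D\le D_0$ for a constant $D_0$ depending only on $m,M,b,P,Q$. We take $\Gamma := \max\{\kappa_0,\,4D_0/m\}$ and close by induction on $k$.

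\emph{Main obstacle.} The delicate step is the bias cross term $\langle x_k,e_k\rangle$. The Young split has to be arranged so that the state-dependent part $\delta P\|x\|^2$ is absorbed by dissipativity. This is exactly what forces the threshold $\delta\lesssim m^2/P$, and it leaves a net contraction rate of order $\gamma m$.
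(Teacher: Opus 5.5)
Your proposal is correct and follows essentially the same route as the paper: a one-step second-moment recursion built from dissipativity, the linear growth $\|\nabla F(x)\|\le M\|x\|+B$, and a Young split that lets dissipativity absorb the state-dependent bias term $\delta P\|x_k\|^2$. The paper applies a $(1+\gamma m)$-weighted split to $(x_k-\gamma\nabla F(x_k))+\gamma(\nabla F(x_k)-g_k)$ and gets the rate $1-\gamma m/3$, whereas you expand fully and treat $\langle x_k,e_k\rangle$ separately to get $1-\gamma m/4$; this difference is cosmetic. Your caveat about $P=0$ is well taken: the paper's explicit $\Gamma$ contains $Q/P$, so it also needs some a priori bound on $\delta Q$ in that degenerate case.
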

\begin{proof}
    See Appendix \ref{app:proof_moment_bounds_IULA}.
\end{proof}

\subsection{Main Results}

\begin{theorem}\label{thm:main_bound_I}
    Let $F$ satisfy Assumption~\ref{ass:smoothness} (it is $M$-smooth) and Assumption~\ref{ass:dissipativity}
    (it is $(m,b)$-dissipative). Assume moreover to have a surrogate $g$ of the gradient satisfying Assumption \ref{ass:gradient_error}. Let $\mu_0$ satisfy Assumption \ref{ass:mu_0} and let \[
\beta\geq\max\left\{d, \frac{8}{m}\right\}, \quad \gamma \le \min\left\{1,\frac{1}{m},\frac{m}{8M^2},\frac{1}{4\beta M^2\CLS (\beta, d)}\right\}, \quad \delta \leq \frac{m^2}{4P}.
\] 
Then, we have the following bound for the sequence generated by Algorithm \ref{alg:I-ULA}
    \begin{equation*}
        \begin{aligned}
            \mathbb{E}[F(x_k)] - \min F \leq \ &   C_0'(M+1)\left(\sqrt{C_2 \gamma k \beta \delta} + \frac{C_2\gamma k \beta \delta}{2}\right) +  \\
        & +C_0(M+1)\left(C_1\beta e^{-k\gamma/(2\beta \CLS(\beta,d))} + \sqrt{8\beta M^2d\gamma \CLS(\beta,d)}+4\beta M^2 d\gamma \CLS(\beta,d)\right)\\
        & +\frac{d}{2\beta}\log \left(\frac{e M}{m}\left(\frac{b \beta}{d}+1\right)\right)
         \end{aligned}
    \end{equation*}
    where $C_0, C_0', C_1$ and $C_2$ are constants independent on $\beta, d,\gamma$ and $\delta$ and explicit in the proof.
\end{theorem}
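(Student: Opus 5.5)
We split the excess risk by inserting the exact ULA chain. Let $(y_k)$ be the iterates of Algorithm~\ref{alg:ULA} with the same initialization $y_0=x_0$ and the same Gaussian noise $(z_k)$ as the inexact chain $(x_k)$. Denote by $\nu_k$ the law of $y_k$. We write
\[
\mathbb{E}[F(x_k)]-\min F=\big(\mathbb{E}[F(x_k)]-\mathbb{E}[F(y_k)]\big)+\big(\mathbb{E}[F(y_k)]-\min F\big).
\]
The second bracket is controlled verbatim by Theorem~\ref{thm:main_bound}; the hypotheses $\beta\ge\max\{d,8/m\}$ and $\gamma\le 1/(4\beta M^2\CLS(\beta,d))$ are stronger than needed. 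This produces the last two lines of the claimed bound. It remains to bound the first bracket by $C_0'(M+1)\big(\sqrt{C_2\gamma k\beta\delta}+\tfrac12 C_2\gamma k\beta\delta\big)$.

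For the first bracket we follow the philosophy of Lemma~\ref{lem:BV_new}: pass from a KL divergence directly to objective values. Concretely, we apply Lemma~\ref{lem:BV_new} with $\nu=\nu_k$, $\mu=\mu_k$ (law of $x_k$), and $f=F$. By Corollary~\ref{cor:abs-quadratic-bound}, $|F|\le\bar L\|x\|^2+\bar A$ with $\bar L=(M+1)/2$. This gives
\[
|\mathbb{E}F(x_k)-\mathbb{E}F(y_k)|\le C_{\nu_k}\Big(\sqrt{\KL(\mu_k\|\nu_k)}+\tfrac12\KL(\mu_k\|\nu_k)\Big),
\]
where $C_{\nu_k}$ is of the form $(M+1)\cdot(\text{const}+\tfrac1c\log\int e^{c\|x\|^2}d\nu_k)$. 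So we need two ingredients.

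First, a \emph{uniform exponential moment} for the exact ULA chain: $\log\int e^{c\|x\|^2}d\nu_k\le K$, uniformly in $k$, for a small $c$ (say $c\sim m\beta/8$ rescaled, or simply a fixed $c\le 1$). We will obtain it by a one-step Lyapunov computation on $e^{c\|y_{k+1}\|^2}$. Conditioning on $y_k$, the Gaussian integral is explicit: $\mathbb{E}[e^{c\|a+\sigma z\|^2}]=(1-2c\sigma^2)^{-d/2}e^{c\|a\|^2/(1-2c\sigma^2)}$ with $\sigma^2=2\gamma/\beta$. We combine this with dissipativity and smoothness, which give $\|y-\gamma\nabla F(y)\|^2\le(1-m\gamma)\|y\|^2+2\gamma b+2\gamma^2B^2$ for $\gamma\le m/(8M^2)$. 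With $\beta\ge\max\{d,8/m\}$ (which makes $2c\sigma^2\le m\gamma/2$ and the factor $d/\beta\le1$) this yields a contraction $\mathbb{E}e^{c\|y_{k+1}\|^2}\le e^{O(\gamma)}(\mathbb{E}e^{c\|y_k\|^2})^{1-m\gamma/4}$, hence a uniform bound in terms of $\kappa_0$ from Assumption~\ref{ass:mu_0}. This defines $C_0'$. If this step proves awkward, a fallback is to take $\nu=\mu_k$ instead and use Proposition~\ref{prop:iter-norm-bound-final}. However, the second-moment bound there does not give exponential moments, so the first route is preferred.

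Second, a \emph{KL bound between the two chains}. We will use the chain rule / data-processing inequality for path measures: the joint laws of $(x_0,\dots,x_k)$ and $(y_0,\dots,y_k)$ satisfy
\[
\KL(\mu_k\|\nu_k)\le\sum_{j<k}\mathbb{E}\,\KL\big(\mathcal L(x_{j+1}\mid x_j)\,\|\,\mathcal N(x_j-\gamma\nabla F(x_j),\tfrac{2\gamma}{\beta}I)\big).
\]
The inexact kernel, conditionally on $x_j$, is a mixture over $\xi_j$ of Gaussians $\mathcal N(x_j-\gamma g(x_j,\xi_j),\tfrac{2\gamma}{\beta}I)$. By joint convexity of KL, each term is at most $\mathbb{E}\frac{\beta}{4\gamma}\gamma^2\|g(x_j,\xi_j)-\nabla F(x_j)\|^2$. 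Assumption~\ref{ass:gradient_error} then gives the bound $\frac{\gamma\beta\delta}{4}(P\,\mathbb{E}\|x_j\|^2+Q)$. Proposition~\ref{prop:iter-norm-bound-final} (valid since $\gamma\le\min\{1,m/(8M^2)\}$, $\delta\le m^2/(4P)$, $\beta\ge d$) bounds $\mathbb{E}\|x_j\|^2\le\Gamma$. Summing gives $\KL(\mu_k\|\nu_k)\le C_2\gamma k\beta\delta$ with $C_2=(P\Gamma+Q)/4$.

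Plugging both ingredients into the display yields the first line, and adding Theorem~\ref{thm:main_bound} concludes. The main obstacle is the uniform exponential moment for $\nu_k$ with constants independent of $\beta,d,\gamma$. This is where the extra conditions $\beta\ge 8/m$ and $\gamma\le 1/m$ should be consumed: they ensure the Gaussian factor $(1-2c\sigma^2)^{-d/2}\approx e^{2c\gamma d/\beta}$ and the inflation $1/(1-2c\sigma^2)$ are dominated by the $m\gamma$ contraction.
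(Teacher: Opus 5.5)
Your proposal is correct and follows the paper's proof: the same decomposition through the exact ULA chain $\nu_k$; Lemma~\ref{lem:BV_new} applied with $\nu=\nu_k$ and fed by a $k$-uniform exponential moment, obtained by the same Gaussian-integral/Jensen recursion as Lemma~\ref{lem:exp_bound}; and the bound $\KL(\mu_k\|\nu_k)\le\frac{\gamma k\beta\delta}{4}(P\Gamma+Q)$ together with Proposition~\ref{prop:iter-norm-bound-final}. The only difference is that you derive the chain-to-chain KL bound directly via the KL chain rule and convexity over $\xi_j$, whereas the paper (Proposition~\ref{prop:discrete_schemes_distance_ST}) imports it from \cite[Lemma 4.4]{XCZG2018}.
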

\begin{remark}
    Since $M,m$ and $b$ are fixed quantities (once $F$ is fixed), then, under the same hypothesis of the previous theorem we have
    \[\begin{aligned}
        \E[F(x_k)]-\min F \leq & \bar C_{\I} \Bigg(\sqrt{\gamma k \beta \delta} + \gamma k \beta \delta +\beta e^{-k\gamma / (2 \beta \CLS(\beta,d))} \\ & \hspace{1cm}+ \sqrt{d\gamma \beta\CLS(\beta,d)}+d\gamma \beta  \CLS(\beta,d) + \frac{d}{\beta} \left(1+\log\left(\frac{\beta}{d}\right)\right)\Bigg),
        \end{aligned}\]
    where $\bar C_{\I}$ is independent of $\beta, \gamma, \delta$ and $d$.
    \end{remark}

In the following corollary, we study the complexity of Algorithm \ref{alg:I-ULA}, namely the number of iterations required to achieve $\E[F(x_k)] - \min F \leq \epsilon$ for $\epsilon \in (0,1)$. To simplify the presentation, we report only the dependence on $d$, $\beta$, $\CLS(\beta, d)$, and $\gamma$ in the choice of the parameters, while explicit constants are provided in the proof.

\begin{corollary}\label{cor:main_I}
   Let $F$, $g$ and $\mu_0$ satisfy the same assumptions of Theorem \ref{thm:main_bound_I} and fix an error parameter $\epsilon \leq 1$. Let
    \begin{equation*}
        \beta = \tilde \Theta\left(\frac{d}{\epsilon}\right),\quad \gamma= \Theta\left(
\frac{\epsilon^2}{\beta d \CLS (\beta, d)}
\right), \quad \text{and}\quad \delta = \tilde \Theta \left(\frac{\epsilon^2}{\beta^2 \CLS(\beta,d)}\right).
    \end{equation*}
    Let $x_k \in \mathbb{R}^d$ be generated by Algorithm \ref{alg:I-ULA} at timestep $k$. Fixing 
    \begin{equation}\label{eq:total_count_needed_I}
        k = O\left(\frac{\beta}{\gamma} \CLS (\beta, d) \log\left(\frac{\beta d}{\epsilon}\right)\right)= \tilde O\left(\frac{\beta^2 d\CLS (\beta,d)^2}{\epsilon^2}\right),
    \end{equation}
    we have $\E[F(x_k)]-\min F\leq \epsilon$.
\end{corollary}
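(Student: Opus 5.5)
The plan is to repeat the parameter calibration of the proof of Corollary~\ref{cor:main}, now applied to the bound of Theorem~\ref{thm:main_bound_I}. That bound has four error sources, so we split $\epsilon=\epsilon_1+\epsilon_2+\epsilon_3+\epsilon_4$ with $\epsilon_i=\epsilon/4$. These four pieces control, respectively, the Gibbs concentration term, the discretization term, the exponentially decaying initialization term, and the new gradient-error term
\[
C_0'(M+1)\Bigl(\sqrt{C_2\gamma k\beta\delta}+\tfrac12 C_2\gamma k\beta\delta\Bigr).
\]
The first three choices are taken verbatim from the exact case.
\begin{itemize}
\item[(i)] Choose $\beta$ exactly as in the proof of Corollary~\ref{cor:main}, with $\epsilon_1$ in place of the earlier $\epsilon_1$, but with $\max\{d,8/m,\dots\}$ in place of $\max\{d,4/m,\dots\}$. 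This keeps $\beta$ in the range allowed by Theorem~\ref{thm:main_bound_I}, and the same monotonicity argument for $f(x)=2\epsilon_1x-\log(\theta x)$ gives $T(\beta)\le\epsilon_1$ and $\beta=\tilde\Theta(d/\epsilon)$.
\item[(ii)] With $\beta$, and hence $\CLS(\beta,d)$, now fixed, set
\[
\gamma=\frac{\epsilon_2^2}{32\beta M^2(M+1)^2C_0^2d\,\CLS(\beta,d)}.
\]
Then $\gamma\le\frac{1}{4\beta M^2\CLS(\beta,d)}$ as before. We must additionally check $\gamma\le\min\{1,1/m,m/(8M^2)\}$. I would argue this holds because $\epsilon\le1$, $C_0\ge1$, and $\beta\CLS(\beta,d)$ is bounded below. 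If a clean lower bound is not at hand, the fallback is simply to take the minimum of this expression with those constants, which only affects constants.
\item[(iii)] Take
\[
k=\Bigl\lceil \tfrac{2\beta\CLS(\beta,d)}{\gamma}\log\bigl(C_0C_1(M+1)\beta/\epsilon_3\bigr)\Bigr\rceil.
\]
This makes the exponential term at most $\epsilon_3$. Substituting $\gamma$ gives $k=\tilde O(\beta^2d\,\CLS(\beta,d)^2/\epsilon^2)$, exactly as in~\eqref{eq:total_count_needed_ULA}.
\end{itemize}

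The genuinely new step is choosing $\delta$, which has to be done after $k$ is fixed, since the error term grows like $\gamma k\beta\delta$. Set $s:=C_2\gamma k\beta\delta$. If $s\le1$, then $s/2\le\sqrt s$, so it suffices to have $2C_0'(M+1)\sqrt s\le\epsilon_4$. This amounts to
\[
\delta\le\frac{\epsilon_4^2}{4C_0'^2(M+1)^2C_2\,\gamma k\beta},
\]
and this choice automatically gives $s\le1$ because $\epsilon_4\le1$ and $C_0'(M+1)\ge1$, which we may assume after enlarging $C_0'$. Next use the choice of $k$:
\[
\gamma k\approx 2\beta\CLS(\beta,d)\log\bigl(C_0C_1(M+1)\beta/\epsilon_3\bigr).
\]
Hence $\gamma k\beta=\tilde O(\beta^2\CLS(\beta,d))$, and $\gamma$ cancels out. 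This gives $\delta=\tilde\Theta(\epsilon^2/(\beta^2\CLS(\beta,d)))$, as claimed. Finally we take the minimum with $m^2/(4P)$ so that the hypothesis on $\delta$ in Theorem~\ref{thm:main_bound_I} holds. For small $\epsilon$ this minimum does not change the order.

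I expect the main obstacle to be bookkeeping rather than conceptual. The $\delta$-term grows linearly in $k$, so $\delta$ cannot be chosen independently of the horizon. One has to make sure the logarithmic factor coming from $k$ is absorbed into the $\tilde\Theta$, and that rounding $k$ up by the ceiling only changes constants. Summing the four contributions then gives $\E[F(x_k)]-\min F\le\epsilon$.
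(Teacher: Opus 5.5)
Your proposal is correct and follows essentially the same route as the paper. You reuse the $\beta$, $\gamma$, $k$ calibration of Corollary~\ref{cor:main} with the split $\epsilon_i=\epsilon/4$, then take $\delta\le\min\{m^2/(4P),\,\epsilon_4^2/(4(M+1)^2(C_0')^2C_2\gamma k\beta)\}$ so that $C_2\gamma k\beta\delta\le 1$, the gradient-error term is at most $2(M+1)C_0'\sqrt{C_2\gamma k\beta\delta}\le\epsilon_4$, and $\gamma k\beta=\tilde O(\beta^2\CLS(\beta,d))$ gives the stated order of $\delta$.

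Your extra checks fill in points the paper passes over: you raise the threshold to $\beta\ge 8/m$ as Theorem~\ref{thm:main_bound_I} requires, and you check $\gamma\le\min\{1,1/m,m/(8M^2)\}$. The latter does hold for the given $\gamma$, because $\beta\CLS(\beta,d)$ is bounded below and $C_0\ge 3/2+2/m$. Also, no enlargement of $C_0'$ is needed, since $C_0'\ge 3/2$ by definition.
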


\subsection{Comparison with existing works}\label{sec:comparisons_I}

We compare Corollary~\ref{cor:main_I} with the closest existing guarantees for inexact or stochastic-gradient Langevin methods. Detailed derivations and translations of the rates are deferred to Appendix~\ref{app:comparisons_ULA}. The key feature of our result is the quadratic dependence on \(\CLS(\beta,d)\) for $k$ and inverse linear dependence for the choice of $\delta$, together with the tight inverse-temperature calibration discussed in Remark~\ref{rem:exact_choice_of_beta}.

\begin{table}[H]
\centering
\begin{tabular}{l c c l}
\toprule
Work & Iterations $k$ & Precision $\delta$ & Main comparison \\
\midrule
Corollary~\ref{cor:main_I}
&
$\tilde O\!\left(\dfrac{\beta^2 d\,\CLS^2}{\epsilon^2}\right)$
&
$\tilde\Theta\!\left(\dfrac{\epsilon^2}{\beta^2\CLS}\right)$
&
our direct KL-to-objective route \\
Raginsky et al.~\cite{Raginsky2017}
&
$\tilde O\!\left(\dfrac{\beta^7\CLS^5}{\epsilon^4}\right)$
&
$\tilde\Theta\!\left(\dfrac{\epsilon^4}{\beta^6\CLS^4}\right)$
&
Wasserstein-to-objective route \\
\cite{Raginsky2017} refined
&
$\tilde O\!\left(\dfrac{\beta^5\CLS^4}{\epsilon^4}\right)$
&
$\tilde\Theta\!\left(\dfrac{\epsilon^4}{\beta^4\CLS^3}\right)$
&
still worse in $\epsilon$ and $\CLS$ \\
Xu et al.~\cite{XCZG2018}
&
--
&
--
&
constants not explicit in $\beta,d,\CLS$  \\
\bottomrule
\end{tabular}
\caption{Comparison of inexact-gradient Langevin optimization guarantees. Here $\CLS=\CLS(\beta,d)$; polynomial factors in $d$ and fixed problem-dependent constants are suppressed, except in our displayed bound.}
\label{tab:comparison_inexact_ula}
\end{table}

The comparison shows the same phenomenon as in the exact-gradient case. The standard route of \textbf{Raginsky, Rakhlin and Telgarsky~\cite{Raginsky2017}} first controls a Wasserstein distance and then converts it into objective-value error, leading to worse powers of both $\epsilon$ and $\CLS(\beta,d)$. A more refined use of their intermediate estimates, combined with the Bolley--Villani inequality~\cite{BolleyVillani}, improves the dependence but still yields a fourth power of $\CLS(\beta,d)$ and a more restrictive admissible precision $\delta$. \textbf{Xu, Chen, Zou and Gu~\cite{XCZG2018}} are closely related in spirit, since they also compare a stochastic chain with an ideal Langevin chain, but their rates are expressed through a discrete-time spectral gap and geometric-ergodicity constants which are not explicit in $\beta$, $d$, and the structural parameters of $F$. Moreover, their proof follows the standard route through Wasserstein-type controls. In contrast, Corollary~\ref{cor:main_I} gives explicit choices of $\gamma$, $k$, and $\delta$, and keeps the quadratic dependence on $\CLS(\beta,d)$ visible throughout.

\subsection{Proof ingredients and proof of the main results}
We recall the ULA defined in the previous section
\begin{equation}\label{eq:ULA-F}
y_{k + 1} = y_k - \gamma \nabla F(y_k) +\sqrt{2\gamma\beta^{-1}} z_k \qquad z_k \sim\mathcal{N}(0, I_{d}), \ y_0\sim \mu_0,\end{equation}
for which we use here the notation $\{y_k\}_k$ to avoid confusion with the sequence $\{x_k\}_k$ generated by Algorithm \ref{alg:I-ULA}. We denote again by $\pi_{\beta}$ the Gibbs distribution for $F$, i.e. $\pi_{\beta}= e^{-\beta F}/Z$, we denote the law of the variable $x_k$ by $\mu_k$ and the law of the variable $y_k$ as $\nu_k$. For the analysis of the inexact method we decompose the error in the following way
\begin{equation}\label{eq:decompose_I}
\begin{aligned}
    \mathbb{E}[F(x_k)] - \min F &= \underbrace{\mathbb{E}[F(x_k) - F(y_k)]}_{(a)} +\underbrace{\mathbb{E}[F(y_k)-F(x^{\pi_{\beta}})]}_{(b)} + \underbrace{\mathbb{E}[F(x^{\pi_{\beta}})]-\min F}_{(c)}.
    \end{aligned}
\end{equation}
We estimate $(b)$ and $(c)$ as we did in Section \ref{sec:Langevin}, while to estimate $(a)$ we use Lemma \ref{lem:BV_new} to control the expectation with the KL divergence between $\mu_k$ and $\nu_k$, which we then control with a result following from \cite[Lemma 4.4]{XCZG2018} and \cite[Lemma 7]{Raginsky2017}. This strategy is enabled by an uniform bound on the exponential moments of the sequence of distributions $\nu_k$ generated by the ULA algorithm. Crucially, unlike other results in the literature (e.g., \cite{Raginsky2017, XCZG2018}), our bound is independent of the time index $k$. Standard analyses often employ bounds that grow linearly with $k$ (see, for instance, \cite[Lemma C.7]{XCZG2018}). While such bounds are sufficient for finite-time horizon analysis, they result in suboptimal dependencies on the Log-Sobolev constant $C_{\text{LS}}(\beta, d)$ in the final convergence rates. Uniform bounds could principally be derived by combining results such \cite[Proposition 8]{DurmusMoulines2017} and \cite[Lemma 1]{DurmusMoulines2017}, but we provide a self-contained proof with explicit constants to ensure explicit dependence on problem parameters.

\begin{lemma}\label{lem:exp_bound}
Let $F$ satisfy Assumptions \ref{ass:smoothness} and \ref{ass:dissipativity}. For 
\begin{equation}\label{eq:conditions_uniform_bound}
    \beta\geq8/m, \quad \text{and} \quad\gamma \le \min\left\{1,\frac{1}{m},\frac{m}{8 M^2}\right\},
\end{equation}
we have the uniform bound
    \[\log \int \exp{(\|y\|^2)} \, d\nu_k(y) \leq \log \kappa_0 + \frac{8d+8\beta b+8\beta B^2}{m\beta},\]
    where we recall that $\kappa_0 = \int \exp(\|y\|^2) \, d\mu_0$ and $B=\|\nabla F(0)\|$.
\end{lemma}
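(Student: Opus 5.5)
We will prove a one-step recursion for the exponential moment $\Phi_k := \int \exp(\|y\|^2)\,d\nu_k(y) = \E[e^{\|y_k\|^2}]$ of exact ULA~\eqref{eq:ULA-F}. The recursion will have the form $\Phi_{k+1}\le e^{-\rho\gamma}\Phi_k + (\text{const})\cdot\gamma$ after a suitable normalization. Iterating it gives $\sup_k \Phi_k \le \max\{\Phi_0, \text{fixed point}\}$, or the product form $\Phi_k\le \kappa_0\cdot e^{(\ldots)}$. Taking logarithms then yields the claim. Working with $\log$ suggests aiming for a multiplicative bound $\Phi_{k+1}\le \exp(c\gamma)\,\E[\exp((1-c'\gamma)\|y_k\|^2)]$. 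We then convert the shrinking exponent using Jensen/Hölder, $\E[e^{(1-c'\gamma)\|y\|^2}]\le \Phi_k^{1-c'\gamma}$, and this gives a recursion on $\ell_k := \log\Phi_k$ of the form $\ell_{k+1}\le (1-c'\gamma)\ell_k + c\gamma$. That recursion gives $\ell_k \le \max\{\ell_0, c/c'\} \le \log\kappa_0 + c/c'$, which is exactly the structure of the statement.

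\emph{Step 1: deterministic drift.} Write $y_{k+1} = u_k + \sqrt{2\gamma/\beta}\,z_k$ with $u_k = y_k - \gamma\nabla F(y_k)$. Expanding gives $\|u_k\|^2 = \|y_k\|^2 - 2\gamma\langle y_k,\nabla F(y_k)\rangle + \gamma^2\|\nabla F(y_k)\|^2$. Dissipativity bounds the cross term, and Lemma~\ref{lem:quad_growth}(ii) gives $\|\nabla F(y)\|^2\le 2M^2\|y\|^2+2B^2$. Together with $\gamma\le m/(8M^2)$ and $\gamma\le 1$, this yields $\|u_k\|^2 \le (1-\tfrac{3}{2}m\gamma)\|y_k\|^2 + 2\gamma b + 2\gamma^2B^2 \le (1-m\gamma)\|y_k\|^2 + 2\gamma(b+B^2)$, with some slack in the constant.

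\emph{Step 2: Gaussian step.} For fixed $u$ and $s=2\gamma/\beta$, the Gaussian integral gives $\E_z[e^{\|u+\sqrt s z\|^2}] = (1-2s)^{-d/2}\exp\big(\|u\|^2/(1-2s)\big)$, provided $2s<1$. This holds since $\beta\ge 8/m$ and $\gamma\le 1/m$ give $s\le m\gamma^2/4\cdot\ldots$; more simply, $s=2\gamma/\beta\le m\gamma/4\le 1/4$. Hence $(1-2s)^{-1}\le 1+4s$ and $(1-2s)^{-d/2}\le e^{2ds}=e^{4d\gamma/\beta}$. The factor $1/(1-2s)\le 1+ m\gamma/2$ multiplies $(1-m\gamma)$, giving a contraction $\le 1-\tfrac{m\gamma}{2}$. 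This is where $\beta\ge 8/m$ is used.

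\emph{Step 3: combine and iterate.} Conditioning on $y_k$, we get $\E[e^{\|y_{k+1}\|^2}\mid y_k]\le \exp\!\big(4d\gamma/\beta + 4\gamma(b+B^2)\big)\exp\big((1-\tfrac{m\gamma}{2})\|y_k\|^2\big)$. Taking expectations and applying Jensen to the concave map $t\mapsto t^{1-m\gamma/2}$ gives $\ell_{k+1}\le (1-\tfrac{m\gamma}{2})\ell_k + 4\gamma\big(d/\beta + b + B^2\big)$. By induction, $\ell_k\le \max\{\ell_0,\ 8(d+\beta b+\beta B^2)/(m\beta)\}\le \log\kappa_0 + \frac{8d+8\beta b+8\beta B^2}{m\beta}$, using $\ell_0=\log\kappa_0\ge 0$ (note $\kappa_0\ge1$).

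The main obstacle is bookkeeping constants in Steps 1--2. The contraction factor from the drift must dominate the expansion $1/(1-2s)$ from the Gaussian step, and the additive terms must come out to exactly $8(\cdot)/(m\beta)$. If the constants do not match, I would first try loosening the drift bound (using $\gamma\le m/(8M^2)$ more generously) while keeping the same structure.
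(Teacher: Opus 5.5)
Your proposal is correct and follows the paper's proof essentially step for step. It uses the conditional Gaussian identity with $v=2\gamma/\beta$, the dissipativity/linear-growth bound on $\|y_k-\gamma\nabla F(y_k)\|^2$, Jensen on the concave power $t\mapsto t^{1-c\gamma}$, and the logarithmic recursion giving exactly $\frac{8d+8\beta b+8\beta B^2}{m\beta}$. The paper only orders the bookkeeping differently: it bounds $\frac{\beta}{\beta-4\gamma}(1-2m\gamma+2M^2\gamma^2)\le 1-(m-\frac{4}{\beta})\gamma$ and then uses $\frac{\beta}{m\beta-4}\le\frac{2}{m}$.

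There is one slip to fix in your Step~2. Since $4s=8\gamma/\beta\le m\gamma$, the bound $\frac{1}{1-2s}\le 1+4s$ only gives $1+m\gamma$, not $1+\frac{m\gamma}{2}$; the latter fails at $\beta=8/m$. Your contraction still holds, because $2s\le\frac{m\gamma}{2}$ gives $\frac{1-m\gamma}{1-2s}\le\frac{1-m\gamma}{1-m\gamma/2}\le 1-\frac{m\gamma}{2}$. Alternatively, keep the factor $1-\frac{3}{2}m\gamma$ from your Step~1, since $(1+m\gamma)(1-\frac{3}{2}m\gamma)\le 1-\frac{m\gamma}{2}$.
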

\begin{proof}
    See Appendix \ref{app:further_proofs_lem}.
\end{proof}

The following result is crucial for us because it avoids the usual path to first bound $W_2$ using the Bolley-Villani inequality \cite[Corollary 2.3]{BolleyVillani}
\[W_2(\mu,\nu) \leq C_{\nu} \left( \KL(\mu||\nu)^{\frac{1}{2}}+\left(\frac{\KL(\mu||\nu)}{2}\right)^{\frac{1}{4}}\right),\] and then using the quadratic bound on $F$ which implies $\E[F(x_k)] - \E[F(y_k)] \leq C W_2(\mu,\nu)$, see \cite[Lemma 6]{Raginsky2017}. This creates a bound of order $\KL(\mu_k,\nu_k)^{\frac{1}{4}}$, which translates into sub-optimal rates, in particular when considering the dependence on $\CLS(\beta,d)$.

\begin{lemma}\label{cor:discretes_bound_ST}
For \[\beta\geq \max\left\{d,\frac{8}{m}\right\}, \qquad \gamma \le \min\left\{1,\frac{1}{m},\frac{m}{8 M^2}\right\},\] we have
    \begin{equation}
        \left|\mathbb{E}[F(x_k) - F(y_k)]\right|=\left|\int F(x) \, d(\mu_k-\nu_k)(x)\right| \leq C_0' \left(\sqrt{\KL(\mu_k||\nu_k)}+\frac{1}{2}\KL(\mu_k||\nu_k) \right),
    \end{equation}
    where $C_0'$ independent on $\gamma$, $\beta$, $\delta$ and $d$, and explicit in the proof.
\end{lemma}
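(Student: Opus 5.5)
The plan is to repeat the argument of Corollary~\ref{cor:discrete_to_gibbs_bound}, with $\nu_k$ (the law of the exact ULA iterate $y_k$) as the reference measure in place of $\pi_\beta$. Lemma~\ref{lem:BV_new} will be applied with $\nu=\nu_k$, $\mu=\mu_k$, $f=F$ and $c=1$. The growth hypothesis of that lemma follows from Corollary~\ref{cor:abs-quadratic-bound}, which gives $|F(x)|\le \bar L\|x\|^2+\bar A$ with $\bar L=\frac{M+1}{2}$ and $\bar A=|F(0)|+\frac{\|\nabla F(0)\|^2}{2}+|\min F|$. These constants depend only on $F$. Lemma~\ref{lem:BV_new} then yields
\[
\left|\int F\,d(\mu_k-\nu_k)\right|\le (M+1)\left(\tfrac32+\tfrac{2\bar A}{M+1}+\log\int e^{\|y\|^2}d\nu_k(y)\right)\left(\sqrt{\KL(\mu_k\|\nu_k)}+\tfrac12\KL(\mu_k\|\nu_k)\right).
\]
The identity $\mathbb{E}[F(x_k)-F(y_k)]=\int F\,d(\mu_k-\nu_k)$ needs no coupling, since only the two marginal laws enter.

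The remaining task is to bound the exponential moment of $\nu_k$ uniformly in $k$, $\beta$, $\gamma$ and $d$. This is exactly where Lemma~\ref{lem:exp_bound} is used. Under $\beta\ge 8/m$ and $\gamma\le\min\{1,1/m,m/(8M^2)\}$, which are the stated hypotheses, it gives
\[
\log\int e^{\|y\|^2}d\nu_k\le \log\kappa_0+\frac{8d}{m\beta}+\frac{8b+8B^2}{m}.
\]
Since also $\beta\ge d$, we have $\frac{8d}{m\beta}\le\frac{8}{m}$. The right-hand side is therefore at most $\log\kappa_0+\frac{8(1+b+B^2)}{m}$, a quantity that depends only on $\mu_0$ and $F$. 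Finiteness of $\int e^{\|y\|^2}d\nu_k$, which Lemma~\ref{lem:BV_new} requires, follows from the same bound.

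With this in hand, I would set
\[
C_0':=(M+1)\left(\tfrac32+\tfrac{2|F(0)|+\|\nabla F(0)\|^2+2|\min F|}{M+1}+\log\kappa_0+\tfrac{8(1+b+B^2)}{m}\right).
\]
This constant is visibly independent of $\gamma$, $\beta$, $\delta$ and $d$, which gives the claim. It can be replaced by its positive part if $\log\kappa_0$ were negative; this cannot happen, since $\kappa_0\ge1$. The statement of Theorem~\ref{thm:main_bound_I} carries an extra factor $(M+1)$ in front of $C_0'$, so one may equivalently define $C_0'$ without the $(M+1)$ and keep that factor outside.

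The only substantive step is the uniform-in-$k$ exponential moment bound for $\nu_k$. Once Lemma~\ref{lem:exp_bound} is available, it reduces to checking that the $d/\beta$ term is absorbed by $\beta\ge d$. No bound on the law $\mu_k$ of the inexact chain is needed, because Lemma~\ref{lem:BV_new} only requires integrability under the reference measure $\nu$. I would note explicitly that the lemma holds for any probability measure $\mu$, so its conclusion remains valid (trivially) even when $\KL(\mu_k\|\nu_k)=+\infty$.
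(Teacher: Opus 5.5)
Your proposal is correct and follows the paper's own proof: Lemma~\ref{lem:BV_new} with $c=1$ and the growth bound from Corollary~\ref{cor:abs-quadratic-bound}, combined with the uniform exponential-moment bound of Lemma~\ref{lem:exp_bound}, where the $8d/(m\beta)$ term is absorbed using $\beta\ge d$. As you noted, the paper defines $C_0'$ without the factor $(M+1)$ and keeps that factor outside, which is the form used in Theorem~\ref{thm:main_bound_I}.
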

\begin{proof}
This follows the same lines as the proof of Corollary \ref{cor:discrete_to_gibbs_bound}. In particular, we have by \ref{cor:abs-quadratic-bound}, that $|F(x)|\leq \bar L\|x\|^2 + \bar A$ with $\bar L = \frac{M+1}{2}$ and $\bar A = |F(0)|+ \frac{\|\nabla F(0)\|^2}{2}+ |\min F|$, and by Lemma \ref{lem:BV_new} and Lemma \ref{lem:exp_bound}, we obtain
\begin{equation}\label{eq:Expectation_bound_lemma}
        \left|\mathbb{E}[F(x_k) - F(y_k)]\right|=\left|\int F(x) \, d(\mu_k-\nu_k)(x)\right| \leq (M+1)\tilde K'(\beta, d) \left(\sqrt{\KL(\mu_k||\nu_k)}+\frac{1}{2}\KL(\mu_k||\nu_k) \right),
\end{equation}
where 
\begin{equation*}
    \begin{aligned}
        \tilde K'(\beta, d)=\frac{3}{2}+\frac{2\bar A}{M +1}+ \log \kappa_0 + \frac{8d+8\beta b+8\beta B^2}{m\beta} .
    \end{aligned}
\end{equation*}
Since $\beta \geq d$, and recalling the definition of $\bar A$, we have
\begin{equation}\label{eq:C0prime_def}
\tilde K'(\beta, d)\leq  \frac{3}{2}+\frac{2|F(0)|+ \|\nabla F(0)\|^2  +2 |\min F|}{M+1}+\log \kappa_0 + \frac{8(1+b+B^2)}{m}=: C_0'.
\end{equation}
\end{proof}

We use the following result to control the KL divergence between $\mu_k$ and $\nu_k$ which presents accumulating errors. This is not surprising since the errors are not supposed to be unbiased and can indeed accumulate.

\begin{proposition}\label{prop:discrete_schemes_distance_ST}
    Let $F$, $\gamma$, $\delta$, $\beta$ and $\mu_0$ as in the hypothesis of Proposition \ref{prop:iter-norm-bound-final}. Recalling that $\mu_k$ is the law of the sequence $x_k$ defined by Algorithm \ref{alg:I-ULA} and $\nu_k$ is the law of the sequence $y_k$ defined in \eqref{eq:ULA-F}, we have that 
    \[\KL(\mu_k||\nu_k)\leq \frac{\gamma k \beta  \delta}{4} (P\,\Gamma+Q).\]
\end{proposition}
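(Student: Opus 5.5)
We bound $\KL(\mu_k\|\nu_k)$ by the KL divergence between the laws of the whole trajectories, and then use the chain rule to reduce it to one-step Gaussian comparisons. This is the discrete analogue of the Girsanov argument in \cite[Lemma 7]{Raginsky2017} and \cite[Lemma 4.4]{XCZG2018}.

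\textbf{Setup.} Let $P_k$ be the joint law of $(x_0,\dots,x_k)$ under Algorithm~\ref{alg:I-ULA}, and $Q_k$ the joint law of $(y_0,\dots,y_k)$ under \eqref{eq:ULA-F}. Both chains start from $\mu_0$. By the data-processing inequality for the projection onto the last coordinate,
\[
\KL(\mu_k\|\nu_k)\le \KL(P_k\|Q_k).
\]

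\textbf{Step 1: chain rule.} Applying the chain rule for KL along the Markov structure gives
\[
\KL(P_k\|Q_k)=\sum_{j=0}^{k-1}\mathbb E_{x_j\sim\mu_j}\Big[\KL\big(P(x_{j+1}\in\cdot\mid x_j)\,\big\|\,Q(y_{j+1}\in\cdot\mid y_j=x_j)\big)\Big].
\]
The reference kernel is Gaussian: $Q(\cdot\mid x)=\mathcal N(x-\gamma\nabla F(x),\,2\gamma\beta^{-1}I_d)$. The inexact kernel is a mixture over $\xi$: $P(\cdot\mid x)=\int \mathcal N(x-\gamma g(x,\xi),\,2\gamma\beta^{-1}I_d)\,d\mu_\xi(\xi)$.

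\textbf{Step 2: one-step bound.} KL is jointly convex, and the reference kernel does not depend on $\xi$. So the KL of the mixture is at most the average over $\xi$ of the Gaussian-to-Gaussian KL. For two Gaussians with common covariance $\sigma^2 I$, the KL equals $\|m_1-m_2\|^2/(2\sigma^2)$. With $\sigma^2=2\gamma/\beta$ this gives
\[
\KL\le \mathbb E_\xi\frac{\gamma^2\|g(x,\xi)-\nabla F(x)\|^2}{4\gamma\beta^{-1}}=\frac{\gamma\beta}{4}\,\mathbb E_\xi\|g(x,\xi)-\nabla F(x)\|^2\le \frac{\gamma\beta\delta}{4}\big(P\|x\|^2+Q\big),
\]
where the last inequality is Assumption~\ref{ass:gradient_error}. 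Independence of $\xi_j$ from $x_j$ justifies evaluating the expectation conditionally on $x_j$.

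\textbf{Step 3: summation.} Taking expectation over $x_j\sim\mu_j$ and using Proposition~\ref{prop:iter-norm-bound-final}, we get $\mathbb E\|x_j\|^2\le\Gamma$ uniformly in $j$. Summing over $j=0,\dots,k-1$ then yields $\frac{\gamma k\beta\delta}{4}(P\Gamma+Q)$, which is the claim.

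\textbf{Main obstacle.} The delicate point is the rigorous justification of the chain rule and of the convexity step for the mixture kernel. The Gaussian smoothing guarantees that all transition densities exist and are strictly positive, so each conditional KL is well defined. The chain rule holds for regular conditional probabilities on $\mathbb R^d$. The convexity inequality $\KL(\int P_\xi\,d\mu_\xi\|Q)\le\int\KL(P_\xi\|Q)\,d\mu_\xi$ then follows from joint convexity, or equivalently from a further chain rule with $\xi$ included in the trajectory.

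Alternatively, we could augment the trajectory with $(\xi_0,\dots,\xi_{k-1})$. On the $y$-side we pair these with independent copies of $\xi$ that the chain ignores, and apply data processing once more. This avoids mixtures altogether and gives the same bound.
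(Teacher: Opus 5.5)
Your proof is correct and gives exactly the paper's constant, but it uses a different mechanism from the one the paper relies on.

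The paper gives no self-contained argument. It defers to \cite[Lemma~4.4]{XCZG2018} and \cite[Lemma~7]{Raginsky2017}. Those proofs embed both chains into continuous-time interpolations with piecewise-constant drifts and apply Girsanov's theorem to the path measures on $[0,k\gamma]$. This yields $\frac{\beta}{4}$ times the time-integrated mean-square drift mismatch, i.e.\ $\frac{\gamma\beta}{4}\sum_{j=0}^{k-1}\E\|g(x_j,\xi_j)-\nabla F(x_j)\|^2$. The paper then only substitutes Assumption~\ref{ass:gradient_error} and the moment bound $\Gamma$.

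You reach the same sum entirely in discrete time. You use data processing onto the last coordinate, the KL chain rule along the Markov structure, and the closed-form KL between Gaussians with common covariance $2\gamma\beta^{-1}I_d$. Your convexity (or $\xi$-augmentation) step corresponds, in the Girsanov route of \cite[Lemma~7]{Raginsky2017}, to replacing the random drift by its conditional expectation given the past of the path and then applying Jensen.

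Your version is self-contained and avoids stochastic calculus and its integrability checks. It also makes clear that neither unbiasedness nor any property of the exact chain is used. Only the mean-square error bound is needed, evaluated along the inexact chain (the first argument of the KL). That is why the moment bound of Proposition~\ref{prop:iter-norm-bound-final} on $x_j$, rather than on $y_j$, is the right one.

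The Girsanov route controls the KL between continuous path laws. That is indispensable when the comparison object is the Langevin diffusion itself, as in \cite{Raginsky2017}. Here it is more than needed, since both objects are discrete chains with the same step size and the same Gaussian noise.
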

\begin{proof}
    The proof is identical to the proof of \cite[Lemma 4.4]{XCZG2018}, which bounds the distance between the SGLD and GLD chains for the mini-batch estimator. Their proof only uses an upper bound on the conditional variance of the stochastic gradient, provided there by Lemma C.5. Under our bound on the variance, all subsequent steps carry over verbatim (the only step which changed is their variance estimate step in equation (C.11)). The resulting bound is the same as the one they get in the proof of \cite[Lemma 4.4]{XCZG2018}, with their mini-batch variance factor replaced by our control on the variance, obtaining our thesis. A similar analysis can be found in the proof of \cite[Lemma 7]{Raginsky2017}.
\end{proof}

\paragraph{Proof of Theorem \ref{thm:main_bound_I} and Corollary \ref{cor:main_I}.}

\begin{proof}[Proof of Theorem \ref{thm:main_bound_I}] We are in the hypothesis of the lemmas and propositions of the previous section.

\textit{Bound of $(a)$.} Recall that $\mu_k$ is the law of $x_k$ and $\nu_{k}$ is the law of $y_k$. These are, respectively, the laws of the discrete stochastic Langevin iterate and of the ULA iterate on $F$. By Lemma~\ref{lem:quad_growth} (ii) applied to $F$, we have
\[
\|\nabla F(x)\|\;\le\;M\|x\|+B,\qquad \forall x\in\R^d.
\]
Combining Proposition \ref{prop:discrete_schemes_distance_ST} with Corollary \ref{cor:discretes_bound_ST} we obtain
\[|(a)|\leq C_0' (M+1) \left(\sqrt{\frac{\gamma k \beta \delta}{4} (P\Gamma+Q)}+\frac{\gamma k \beta \delta}{8} (P\Gamma+Q) \right).\]
We can therefore choose \begin{equation}\label{eq:C2_def}
    C_2:=\frac{P\Gamma + Q}{4}, \quad \text{with $\Gamma$ as in \eqref{eq:Gamma_def}}.
\end{equation}

\textit{Bound of $(b)$ and $(c)$:} These can be bounded directly using Theorem \ref{thm:main_bound}.
\end{proof}

\begin{proof}[Proof of Corollary \ref{cor:main_I}] First of all, set $\epsilon_1, \epsilon_2, \epsilon_3, \epsilon_4 > 0$, with $\sum_{i=1}^4 \epsilon_i = \epsilon$. We advise choosing $\epsilon_1 \approx \epsilon$, since this will fix $\beta$ and the $\CLS(\beta, d)$ constant can depend badly on $\beta$. However, we leave the choice to the reader. For theoretical purposes we fix $\epsilon_1=\epsilon_2=\epsilon_3= \epsilon_4 = \frac{\epsilon}{4}$.

\textit{\underline{Choice of $\beta$},}
\textit{\underline{Choice of $\gamma$}} and 
\textit{\underline{Choice of $k$}} can be done exactly as in Corollary \ref{cor:main}.

\textit{\underline{Choice of $\delta$}.} Finally we set
\[\delta \leq \min\left\{\frac{m^2}{4P},\frac{\epsilon_4^2}{4 (M+1)^2 (C_0')^2 C_2\gamma k \beta}\right\},\]
with $C_0'$ as in \eqref{eq:C0prime_def} and $C_2$ as in \eqref{eq:C2_def} and with the usual convention that the first condition is void when $P=0$. Since $C_0'$ defined in \eqref{eq:C0prime_def} is greater than one, we also have $\gamma k \beta C_2 \delta \leq 1$, and therefore $\frac{\gamma k \beta C_2 \delta}{2}\leq\sqrt{\gamma k\beta C_2 \delta}$. This implies that
\[(M+1) C_0' \left(\sqrt{\gamma k \beta C_2 \delta} +\frac{\gamma k \beta C_2 \delta}{2}\right)\leq 2(M+1) C_0' \sqrt{\gamma k \beta C_2 \delta} \leq \epsilon_4.\]
\end{proof}

\section{Applications}\label{sec:Applications}

\subsection{Stochastic/Mini-batch ULA}\label{sec:Stochastic_ULA}

In this section we consider the problem of minimizing the non-convex function given by
\[\min_{x\in \R^d} F(x):= \E_{\zeta \sim \mu_\zeta}[f(x,\zeta)].\]

\begin{algorithm}[H] 
\caption{Mini-batch ULA} \label{alg:MB-ULA}
\begin{algorithmic}
  \STATE{{\bf Input:} $x_0 \sim \mu_0$, $\beta, \gamma \in \mathbb{R}_+$, $s\in \N$}
     \FOR{$k = 0,1,\dots$}
        \STATE{sample independent $\zeta_{1,k},\dots, \zeta_{s,k}$ from $\mu_\zeta$}
         \STATE{sample an independent $z_k$ from $\mathcal{N}(0, I_{d})$}
         \STATE{$x_{k + 1} = x_k - \gamma \frac{1}{s}\sum_{i=1}^s \nabla f(x_k,\zeta_{i,k}) +\sqrt{2\gamma\beta^{-1}} z_k$}
     \ENDFOR
 \end{algorithmic}    
\end{algorithm}

We work under the following assumptions and apply directly the results developed in Section \ref{sec:Inexact}.

\begin{assumption}
    Assume that for every $\zeta \in \supp{\mu_\zeta}$ the function $f(\cdot, \zeta)$ is $M$-smooth and $(m,b)$-dissipative. Moreover we assume that $\sup_{\zeta}\|\nabla f(0,\zeta)\|=: B< +\infty$.
\end{assumption}

\begin{lemma}[Inheritance of structural properties]
\label{lem:inheritance}
Assume that for every $\zeta \in \text{supp}(\mu_\zeta)$, the function $f(\cdot, \zeta)$ is $M$-smooth and $(m, b)$-dissipative. Then the objective function $F(x) = \mathbb{E}_{\zeta \sim \mu_\zeta}[f(x, \zeta)]$ is also $M$-smooth and $(m, b)$-dissipativity holds.
\end{lemma}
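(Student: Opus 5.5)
The plan is to pass smoothness and dissipativity from $f(\cdot,\zeta)$ to $F$ through the expectation. Before that, we need $F$ to be well defined and differentiable, with $\nabla F(x)=\mathbb{E}[\nabla f(x,\zeta)]$.

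\emph{Integrability.} By Lemma~\ref{lem:quad_growth}(ii) applied to each $f(\cdot,\zeta)$, with the uniform constant $B=\sup_\zeta\|\nabla f(0,\zeta)\|$, we get $\|\nabla f(x,\zeta)\|\le M\|x\|+B$ uniformly in $\zeta$. Since $F(0)=\mathbb{E}[f(0,\zeta)]$ is assumed finite (implicit in $F$ being defined), we can write
\[
f(x,\zeta)=f(0,\zeta)+\int_0^1\langle\nabla f(tx,\zeta),x\rangle\,dt .
\]
The integrand is bounded by $(M\|x\|+B)\|x\|$, so $f(x,\zeta)$ is integrable for every $x$.

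\emph{Differentiation under the expectation.} For a difference quotient in direction $h$ with $\|h\|\le1$, the mean value theorem bounds $|f(x+th,\zeta)-f(x,\zeta)|/t$ by $M(\|x\|+1)+B$. This is independent of $\zeta$, so dominated convergence gives $\nabla F(x)=\mathbb{E}[\nabla f(x,\zeta)]$. This step is the main technical obstacle, but the uniform bound on $\nabla f(0,\zeta)$ is exactly what makes it work. Continuity of $\nabla F$ will follow from the Lipschitz bound below.

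\emph{Smoothness.} Using Jensen's inequality for the norm,
\[
\|\nabla F(x)-\nabla F(y)\|=\|\mathbb{E}[\nabla f(x,\zeta)-\nabla f(y,\zeta)]\|\le\mathbb{E}\|\nabla f(x,\zeta)-\nabla f(y,\zeta)\|\le M\|x-y\|,
\]
so $F$ is $M$-smooth.

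\emph{Dissipativity.} By linearity of the expectation and the inner product,
\[
\langle x,\nabla F(x)\rangle=\mathbb{E}\langle x,\nabla f(x,\zeta)\rangle\ge m\|x\|^2-b,
\]
since the pointwise inequality holds for every $\zeta\in\operatorname{supp}\mu_\zeta$, which is a set of full measure.
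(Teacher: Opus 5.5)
Your proof is correct and follows the same route as the paper: Jensen's inequality for the norm gives $M$-smoothness, and linearity of the expectation gives $(m,b)$-dissipativity. The only difference is that you justify $\nabla F(x)=\mathbb{E}[\nabla f(x,\zeta)]$ by dominated convergence, using the uniform bound on $\|\nabla f(0,\zeta)\|$ from the section's standing assumption, whereas the paper asserts this interchange without proof.
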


\begin{proof}
First, we prove $M$-smoothness. By the linearity of the gradient and the expectation, $\nabla F(x) = \mathbb{E}_\zeta[\nabla f(x, \zeta)]$. For any $x, y \in \mathbb{R}^d$:
\begin{align*}
\|\nabla F(x) - \nabla F(y)\| &= \|\mathbb{E}_\zeta[\nabla f(x, \zeta) - \nabla f(y, \zeta)]\| \le \mathbb{E}_\zeta[\|\nabla f(x, \zeta) - \nabla f(y, \zeta)\|] \le \mathbb{E}_\zeta[M\|x - y\|] = M\|x - y\|,
\end{align*}
where the first inequality follows from Jensen's inequality and the second from the $M$-smoothness of each $f(\cdot, \zeta)$. Next, we prove $(m, b)$-dissipativity. Using the linearity of expectation and the dissipativity of each $f(\cdot, \zeta)$ we have
\begin{align*}
\langle x, \nabla F(x) \rangle &= \langle x, \mathbb{E}_\zeta[\nabla f(x, \zeta)] \rangle = \mathbb{E}_\zeta[\langle x, \nabla f(x, \zeta) \rangle] \ge \mathbb{E}_\zeta[m\|x\|^2 - b] = m\|x\|^2 - b.
\end{align*}
This completes the proof.
\end{proof}

\begin{lemma}[Controlled growth of the gradient error]
\label{lem:MB-gradient-error}
Assume that for all $\zeta \in \text{supp}(\mu_\zeta)$, the function $f(\cdot, \zeta)$ is $M$-smooth, and let $B := \sup_{\zeta} \|\nabla f(0, \zeta)\|$. For the mini-batch gradient estimator $g(x, \xi) = \frac{1}{s}\sum_{i=1}^s \nabla f(x, \zeta^i)$ where $\zeta^i \sim \mu_\zeta$ are i.i.d., the Mean Squared Error (MSE) satisfies:
\begin{equation}
\mathbb{E}\left[ \|g(x, \xi) - \nabla F(x)\|^2 \right] \leq \delta \left( P\|x\|^2 + Q \right),
\end{equation}
with the following parameters: $\delta = \frac{1}{s}$, $P = 2M^2$ and $Q = 2B^2$.
\end{lemma}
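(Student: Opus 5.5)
The plan is to exploit independence of the mini-batch samples, which reduces the mean-square error to a single-sample variance, and then to bound that variance with the smoothness of each $f(\cdot,\zeta)$. Write $g(x,\xi)-\nabla F(x)=\frac1s\sum_{i=1}^s\big(\nabla f(x,\zeta^i)-\nabla F(x)\big)$. The summands are i.i.d. and have zero mean, since $\nabla F(x)=\E_\zeta[\nabla f(x,\zeta)]$ as in the proof of Lemma~\ref{lem:inheritance}. Expanding the square therefore kills the cross terms, which gives
\[
\E\|g(x,\xi)-\nabla F(x)\|^2=\frac1s\,\E_\zeta\|\nabla f(x,\zeta)-\nabla F(x)\|^2\le \frac1s\,\E_\zeta\|\nabla f(x,\zeta)\|^2 .
\]
The final inequality is the usual variance-below-second-moment bound. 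This step fixes $\delta=1/s$.

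Next we bound the single-sample second moment uniformly in $\zeta$. By $M$-smoothness of $f(\cdot,\zeta)$,
\[
\|\nabla f(x,\zeta)\|\le \|\nabla f(x,\zeta)-\nabla f(0,\zeta)\|+\|\nabla f(0,\zeta)\|\le M\|x\|+B,
\]
where $B=\sup_\zeta\|\nabla f(0,\zeta)\|$. This is the pointwise analogue of Lemma~\ref{lem:quad_growth}(ii). Applying $(a+b)^2\le 2a^2+2b^2$ gives $\|\nabla f(x,\zeta)\|^2\le 2M^2\|x\|^2+2B^2$ for every $\zeta$ in the support, so the expectation over $\zeta$ obeys the same bound. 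Combining this with the first display yields the claim with $P=2M^2$ and $Q=2B^2$.

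No step is a genuine obstacle; the only points that need care are technical. The expectation and gradient must interchange so that $\nabla F=\E_\zeta\nabla f$. Differentiation under the integral is legitimate because the linear-growth bound above gives an integrable dominating function. We also need finiteness of $\E_\zeta\|\nabla f(x,\zeta)\|^2$ to justify expanding the square, and the same uniform bound supplies it.
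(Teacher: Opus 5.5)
Your proposal is correct and follows the paper's proof essentially step for step. Both arguments use independence to reduce the MSE to $\frac{1}{s}$ times the single-sample variance, bound the variance by the second moment, and apply $M$-smoothness together with $(a+b)^2\le 2a^2+2b^2$ to obtain $\delta=1/s$, $P=2M^2$, $Q=2B^2$. Your added remarks on interchanging $\nabla$ and $\mathbb{E}_\zeta$ and on finiteness of the second moment are sound refinements that the paper leaves implicit.
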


\begin{proof}
By the independence and unbiasedness of the samples $\zeta^i$ ($\mathbb{E}_{\zeta}[\nabla f(x, \zeta)] = \nabla F(x)$), the variance of the mini-batch mean is equal to the variance of a single sample divided by $s$:
\begin{equation}
    \mathbb{E}\left[ \left\| \frac{1}{s}\sum_{i=1}^s \nabla f(x, \zeta^i) - \nabla F(x) \right\|^2 \right] = \frac{1}{s} \mathbb{E}_{\zeta \sim \mu_\zeta} \left[ \|\nabla f(x, \zeta) - \nabla F(x)\|^2 \right].
\end{equation}
Using the identity $\mathbb{E}\|X - \mathbb{E}X\|^2 = \mathbb{E}\|X\|^2 - \|\mathbb{E}X\|^2 \leq \mathbb{E}\|X\|^2$, we can upper-bound the single-sample variance by the second moment: $\mathbb{E}_{\zeta \sim \mu_\zeta} \left[ \|\nabla f(x, \zeta) - \nabla F(x)\|^2 \right] \leq \mathbb{E}_{\zeta \sim \mu_\zeta} \left[ \|\nabla f(x, \zeta)\|^2 \right]$.
By the $M$-smoothness of $f(\cdot, \zeta)$, we have the gradient growth bound $\|\nabla f(x, \zeta) - \nabla f(0, \zeta)\| \leq M\|x\|$. Using the inequality $\|a+b\|^2 \leq 2\|a\|^2 + 2\|b\|^2$, we obtain: $\|\nabla f(x, \zeta)\|^2 \leq 2\|\nabla f(x, \zeta) - \nabla f(0, \zeta)\|^2 + 2\|\nabla f(0, \zeta)\|^2 \leq 2M^2\|x\|^2 + 2B^2$.
Taking the expectation over $\zeta$ and substituting back into the mini-batch expression:
\begin{equation}
    \mathbb{E}\left[ \|g(x, \xi) - \nabla F(x)\|^2 \right] \leq \frac{1}{s} \left( 2M^2\|x\|^2 + 2B^2 \right).
\end{equation}
By setting $\delta = 1/s$, $P = 2M^2$, and $Q = 2B^2$, we recover the desired form.
\end{proof}

We can therefore apply the results of Section \ref{sec:Inexact}. In particular, we have the following result.
\begin{corollary}\label{cor:main_MB}
   Let $F$, $g$ and $\mu_0$ satisfy the same assumptions of Theorem \ref{thm:main_bound_I} and fix an error parameter $\epsilon \leq 1$. Let
    \begin{equation*}
        \beta = \tilde \Theta\left(\frac{d}{\epsilon}\right),\quad \gamma= \Theta\left(
\frac{\epsilon^2}{\beta d \CLS (\beta, d)}
\right), \quad \text{and}\quad s = \tilde \Theta \left(\frac{\beta^2 \CLS(\beta,d)}{\epsilon^2}\right).
    \end{equation*}
    Let $x_k \in \mathbb{R}^d$ be generated by Algorithm \ref{alg:MB-ULA} at timestep $k$. Fixing 
    \begin{equation}\label{eq:total_count_needed}
        k = O\left(\frac{\beta}{\gamma} \CLS (\beta, d) \log\left(\frac{\beta d}{\epsilon}\right)\right)= \tilde O\left(\frac{\beta^2 d\CLS (\beta,d)^2}{\epsilon^2}\right),
    \end{equation}
    we have $\E[F(x_k)]-\min F\leq \epsilon$. In particular, we can achieve an error of $\E[F(x_k)]-\min F\leq \epsilon \leq 1$, by performing
    \begin{equation}\label{eq:total_count_needed_ST}
    ks = \tilde O\left(\frac{\beta^4 d \CLS(\beta,d)^3}{\epsilon^4}\right),\end{equation}
    evaluations of gradients of the single functions $f(\cdot, \zeta)$.
\end{corollary}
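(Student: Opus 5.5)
The plan is to recognize Algorithm~\ref{alg:MB-ULA} as an instance of Algorithm~\ref{alg:I-ULA} and then apply Corollary~\ref{cor:main_I}. The surrogate is $g(x,\xi)=\frac1s\sum_{i=1}^s\nabla f(x,\zeta^i)$ with $\xi=(\zeta^1,\dots,\zeta^s)$. Two structural facts are needed first.

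\begin{itemize}
\item By Lemma~\ref{lem:inheritance}, $F$ is $M$-smooth and $(m,b)$-dissipative, so Assumptions~\ref{ass:smoothness}--\ref{ass:dissipativity} hold for $F$. Proposition~\ref{prop:LSI} therefore gives the constant $\CLS(\beta,d)$ of $\pi_\beta$.
\item By Lemma~\ref{lem:MB-gradient-error}, Assumption~\ref{ass:gradient_error} holds with $\delta=1/s$, $P=2M^2$ and $Q=2B^2$.
\end{itemize}

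The constants $P,Q$ do not depend on $\beta,d,\gamma,s$. Hence $\Gamma$ from Proposition~\ref{prop:iter-norm-bound-final}, and with it $C_2=(P\Gamma+Q)/4$, $C_0'$ and $C_0,C_1$, are all independent of the algorithm parameters, as Theorem~\ref{thm:main_bound_I} requires.

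Next I choose the parameters exactly as in Corollary~\ref{cor:main_I}.

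\begin{itemize}
\item $\beta$ and $\gamma$ are taken as in Corollary~\ref{cor:main}. I also need $\beta\ge 8/m$ and $\gamma\le\min\{1,1/m,m/(8M^2)\}$. Both hold for $\epsilon$ small, or can be imposed by adding these terms to the max and min, without changing the $\tilde\Theta$ orders.
\item $k$ is taken as in \eqref{eq:total_count_needed_I}.
\item The batch size is set by $s=\lceil 1/\delta\rceil$, where
\[
\delta=\min\Bigl\{\tfrac{m^2}{8M^2},\tfrac{\epsilon_4^2}{4(M+1)^2(C_0')^2C_2\gamma k\beta}\Bigr\}.
\]
Then $1/s\le\delta$ meets both requirements of the proof of Corollary~\ref{cor:main_I}. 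In particular $\delta\le m^2/(4P)$ with $P=2M^2$.
\end{itemize}

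With these choices, Corollary~\ref{cor:main_I} directly gives $\E[F(x_k)]-\min F\le\epsilon$.

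It remains to track orders. The product satisfies
\[
\gamma k\beta=\Theta\bigl(\beta^2\CLS\log(\beta d/\epsilon)\bigr).
\]
This holds because $k=\Theta\bigl((\beta/\gamma)\CLS\log(\beta d/\epsilon)\bigr)$. It follows that $s=\tilde\Theta(\beta^2\CLS/\epsilon^2)$, and the logarithm is absorbed in the tilde. Multiplying,
\[
ks=\tilde O\Bigl(\tfrac{\beta^2 d\CLS^2}{\epsilon^2}\cdot\tfrac{\beta^2\CLS}{\epsilon^2}\Bigr)=\tilde O\Bigl(\tfrac{\beta^4d\CLS^3}{\epsilon^4}\Bigr),
\]
which is \eqref{eq:total_count_needed_ST}. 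Each iteration costs exactly $s$ single-function gradient evaluations.

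The main point to check is that no constant hidden in $\delta$ depends on $s$ or $\beta$. The concern is $\Gamma$, which depends on $P,Q$, $\kappa_0$ and the structural constants, and on nothing else. A second subtlety is the ceiling in $s$: it only decreases $\delta$, so the conditions remain satisfied.
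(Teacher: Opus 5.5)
Your proposal is correct and follows the paper's proof. You cast Algorithm~\ref{alg:MB-ULA} as an instance of Algorithm~\ref{alg:I-ULA} via Lemmas~\ref{lem:inheritance} and~\ref{lem:MB-gradient-error}, reuse the $\beta,\gamma,k$ choices behind Corollary~\ref{cor:main_I}, and set $s\approx 1/\delta$ using $\gamma k\beta=\tilde\Theta(\beta^2\CLS(\beta,d))$. The only difference is that you keep the stability constraint $\delta\le m^2/(8M^2)$ explicitly in the minimum and flag $\beta\ge 8/m$ and the extra stepsize caps, whereas the paper drops that constraint by arguing it follows from $C_0'\ge 8/m$. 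Your version is slightly more careful, since the paper's implication tacitly also needs $\epsilon_4^2\le C_2\gamma k\beta$.
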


\begin{proof}
   The proof follows from Corollary~\ref{cor:main_I}. To be even sharper in this specific case, to guarantee the error bound is actually enough to require
    \[\delta \leq \frac{\epsilon_4^2}{4 (M+1)^2 (C_0')^2 C_2\gamma k \beta},\]
    with $C_0'$ as in \eqref{eq:C0prime_def} and $C_2$ as in \eqref{eq:C2_def}. Indeed, since $C_0'\geq \frac{8}{m}$, then $\frac{1}{4(M+1)^2(C_0')^2}\leq \frac{m^2}{256 M^2}\leq \frac{m^2}{8M^2}=\frac{m^2}{4P} $ so that we don't need the additional constraint $\delta \leq \frac{m^2}{4P}$ present in Corollary \ref{cor:main_I}. This leads to the condition
    \[s\geq \frac{4 (M+1)^2 (C_0')^2 C_2\gamma k \beta}{\epsilon_4^2},\]
    and the thesis.
\end{proof}

\paragraph{Comparison with existing SGLD guarantees.}

We compare Corollary~\ref{cor:main_MB} with existing stochastic-gradient Langevin guarantees; detailed derivations are deferred to Appendix~\ref{app:comparisons_mini}. The key feature of our result is the cubic dependence on \(\CLS(\beta,d)\) for the complexity $ks$, together with the tight inverse-temperature calibration discussed in Remark~\ref{rem:exact_choice_of_beta}.

\begin{table}[H]
\centering
\begin{tabular}{l c l}
\toprule
Work & Complexity & Main comparison \\
\midrule
Corollary~\ref{cor:main_MB}
&
$\tilde O\!\left(\dfrac{\beta^4 d\,\CLS^3}{\epsilon^4}\right)$
&
our direct KL-to-objective route\\
Raginsky et al.~\cite{Raginsky2017}
&
$\tilde O\!\left(\dfrac{\beta^{13}\CLS^9}{\epsilon^8}\right)$
&
Wasserstein-to-objective route \\
\cite{Raginsky2017} refined
&
$\tilde O\!\left(\dfrac{\beta^9\CLS^7}{\epsilon^8}\right)$
&
still worse in $\epsilon$ and $\CLS$ \\
Xu et al.~\cite{XCZG2018}
&
$\tilde O\!\left(\dfrac{d^7\CLS^5}{\epsilon^5}\right)$
&
most favorable translation, still worse \\
Zou et al.~\cite{Zou2021Faster}
&
$\tilde O\!\left(\dfrac{d^6\beta^2 \CLS^4}{\epsilon^2}\right)$
&
Cheeger route; exploits unbiasedness \\
\midrule
\textbf{Require larger $\beta$:}\\
Kinoshita--Suzuki~\cite{KinoshitaSuzuki2022}
&
$\tilde O\!\left(
    \dfrac{d\beta^2\CLS^3}{\epsilon}
\right)$
&
variance-reduced \\
Chen et al.~\cite{Chen2024}
&
$\tilde O\!\left(
\max\left\{
d^3\CLS^3,\,
\dfrac{d^2\CLS^2}{\epsilon^2}
\right\}
\right)$
&
hitting-time guarantee \\
\bottomrule
\end{tabular}
\caption{Comparison of stochastic-gradient Langevin optimization guarantees. Here $\CLS=\CLS(\beta,d)$ and polynomial factors and fixed problem-dependent constants are suppressed.}
\label{tab:comparison_mb_ula}
\end{table}

The rates in Table~\ref{tab:comparison_mb_ula} should be read at fixed \(\beta\). For global optimization, however, \(\beta\) is not a harmless parameter: it is chosen to make the Gibbs bias small, and \(\CLS(\beta,d)\) may depend exponentially on \(\beta\). As recalled in Remark~\ref{rem:exact_choice_of_beta}, our expected-risk analysis uses an essentially tight temperature scale, $\beta = \frac{d}{2\epsilon_1}\,\tilde\Theta\!\left(\log\frac{d}{\epsilon_1}\right)$, $\epsilon_1\simeq \epsilon$, which is already unavoidable even for quadratic objectives. Therefore, methods that require a larger inverse temperature may suffer a much larger functional-inequality constant, even if their displayed polynomial dependence on \(\epsilon\) looks favorable.

The comparison mirrors the exact and inexact cases. The Wasserstein-based route of \textbf{Raginsky, Rakhlin and Telgarsky~\cite{Raginsky2017}} leads to substantially worse powers of both \(\epsilon\) and \(\CLS(\beta,d)\), even after refining the intermediate estimates. \textbf{Xu, Chen, Zou and Gu~\cite{XCZG2018}} obtain SGLD bounds in terms of a discrete-time spectral gap; even under the favorable translation \(\lambda(\beta,d)^{-1}\sim\CLS(\beta,d)\), the resulting dependence contains a fifth power of \(\CLS\). \textbf{Zou, Xu and Gu~\cite{Zou2021Faster}} exploit unbiasedness and obtain a bound in terms of a Cheeger constant \(\rho\), which translates into a fourth power of \(\CLS(\beta,d)\); their batch-size choice does not depend on \(\CLS\), which can be advantageous in practice. \textbf{Kinoshita and Suzuki~\cite{KinoshitaSuzuki2022}} obtain sharper rates for variance-reduced Langevin dynamics, but their setting requires periodic full-gradient computations and depends explicitly on the finite-sum size, so it is complementary to the plain mini-batch oracle considered here. Moreover, as discussed in Appendix~\ref{app:comparisons_mini}, their objective-value conversion leads to a larger inverse-temperature choice. Since \(\CLS(\beta,d)\) may scale like \(e^{c\beta}\), this can dominate the apparently better polynomial dependence on \(\epsilon\). Finally, \textbf{Chen, Sekhari and Sridharan~\cite{Chen2024}} give high-probability hitting-time guarantees for GLD and SGLD using Lyapunov potentials. Their SGLD result exploits unbiasedness directly and assumes a uniformly sub-Gaussian stochastic-gradient oracle, sample-wise smoothness and dissipativity, and a Lyapunov-potential regularity condition. Under these stronger assumptions, their oracle complexity can be sharper than the one obtained here from the general inexact-gradient framework. However, their analysis leads to a more conservative inverse-temperature calibration than our expected-risk formulation, which only requires \(\mathbb E_{\pi_\beta}F-\min F\) to be below the allocated error budget. Since \(\CLS(\beta,d)\) can depend exponentially on $\beta$, this difference is significant.

\begin{remark}[On unbiased analysis]\label{rem:toPart2}
    We believe that a dedicated treatment of the (unbiased) stochastic setting that explicitly exploits the unbiased nature of the estimator, is fundamental. We omit this approach here because it falls outside the focus of the current work, which primarily addresses the biased and inexact setting using a fundamentally different analysis. Nevertheless, it is noteworthy that our current analysis, even if not optimally sharp for the unbiased case, still yields bounds that, to the best of our knowledge, improve upon existing results for stochastic Langevin methods in optimization.
\end{remark}

\subsection{Zeroth-order optimization}\label{sec:ZO}

We now show that the inexact-ULA analysis developed in Section~\ref{sec:Inexact} naturally applies to zeroth-order optimization. In this setting, the algorithm does not have access to the gradient $\nabla F(x)$, but only to function values of $F$. The goal is therefore to construct a random surrogate $g(x,\xi)$ using finite differences and to verify Assumption~\ref{ass:gradient_error}. Once this is done, the convergence result follows directly from Theorem~\ref{thm:main_bound_I}.

Throughout this subsection we assume that $F$ satisfies Assumptions~\ref{ass:smoothness} and \ref{ass:dissipativity}. We denote as always $B:=\|\nabla F(0)\|$ and recall that by Lemma~\ref{lem:quad_growth} it holds $\|\nabla F(x)\|\leq M\|x\|+B$, for all $x\in\mathbb{R}^d$. This linear growth estimate is the key ingredient that allows us to control the variance of the zeroth-order estimators by a quadratic function of $\|x\|$.

\paragraph{Gaussian finite-difference estimator.}

Let $u\sim \mathcal{N}(0,I_d)$ and let $h>0$ be a smoothing parameter. The one-direction Gaussian finite-difference estimator is defined by
\[
    g_h^{\mathrm{G}}(x,u)
    :=
    \frac{F(x+h u)-F(x)}{h}\,u.
\]
Given a batch of independent directions $u_1,\ldots,u_s\sim\mathcal{N}(0,I_d)$, we use the averaged estimator
\[
    g_{h,s}^{\mathrm{G}}(x)
    :=
    \frac{1}{s}\sum_{i=1}^s g_h^{\mathrm{G}}(x,u_i).
\]
This estimator is not, in general, unbiased for $\nabla F(x)$. Rather, it is unbiased for the gradient of the Gaussian-smoothed objective \cite{NesterovSpokoiny2017}
\[
    F_h^{\mathrm{G}}(x)
    :=
    \mathbb{E}_{u\sim\mathcal{N}(0,I_d)}[F(x+h u)].
\]
The error with respect to $\nabla F(x)$ is therefore the sum of a variance term and a smoothing bias term.

\begin{lemma}[Gaussian zeroth-order estimator]\label{lem:gaussian_ZO_error}
Assume that $F$ is $M$-smooth. Let $g_{h,s}^{\mathrm{G}}$ be defined as above and assume $h\leq 1$. Then, for every $x\in\mathbb{R}^d$,
\[
    \mathbb{E}\left[
        \left\|g_{h,s}^{\mathrm{G}}(x)-\nabla F(x)\right\|^2
    \right]
    \leq
    \delta_{\mathrm{G}}
    \left(P_{\mathrm{G}}\|x\|^2+Q_{\mathrm{G}}\right),
\]
where
\begin{equation}\label{eq:deltaG_def}
    \delta_{\mathrm{G}}
    :=
    \frac{d(d+2)}{s}
    +
    d(d+2)(d+4)h^2,
\end{equation}
and $P_{\mathrm{G}},Q_{\mathrm{G}}$ are constants independent of $d,\beta,\gamma,h,s$ and explicit in the proof. In particular, Assumption~\ref{ass:gradient_error} holds with $\delta=\delta_{\mathrm{G}}$, $P=P_{\mathrm{G}}$ and $Q=Q_{\mathrm{G}}$.
\end{lemma}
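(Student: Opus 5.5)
We split the error into a variance part and a bias part around the smoothed gradient. Write $\bar g(x):=\mathbb{E}_u[g_h^{\mathrm{G}}(x,u)]$; by the Gaussian integration-by-parts identity of \cite{NesterovSpokoiny2017}, $\bar g(x)=\nabla F_h^{\mathrm{G}}(x)$. Since the directions $u_i$ are i.i.d., we have
\[
\mathbb{E}\|g_{h,s}^{\mathrm{G}}(x)-\nabla F(x)\|^2=\frac{1}{s}\,\mathbb{E}\|g_h^{\mathrm{G}}(x,u)-\bar g(x)\|^2+\|\nabla F_h^{\mathrm{G}}(x)-\nabla F(x)\|^2 .
\]
We bound the first term by $\frac1s\mathbb{E}\|g_h^{\mathrm{G}}(x,u)\|^2$.

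\textbf{Bias term.} This is the standard Nesterov--Spokoiny estimate for $M$-smooth $F$, namely $\|\nabla F_h^{\mathrm{G}}(x)-\nabla F(x)\|\le \frac{hM}{2}(d+3)^{3/2}$. We square it and use $(d+3)^3\le c\,d(d+2)(d+4)$, with an absolute $c$ valid for $d\ge1$. This gives a contribution of the form $d(d+2)(d+4)h^2\cdot Q_1$ with $Q_1\propto M^2$. Alternatively, write $F(x+hu)-F(x)=h\langle\nabla F(x),u\rangle+r$ with $|r|\le \frac{M}{2}h^2\|u\|^2$. Then $\bar g-\nabla F=\mathbb{E}[r u]/h$, and Cauchy--Schwarz with Gaussian moments bounds it by $\frac{Mh}{2}\sqrt{\mathbb{E}\|u\|^6}$, where $\mathbb{E}\|u\|^6=d(d+2)(d+4)$. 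This route matches the form of \eqref{eq:deltaG_def} exactly, so we use it.

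\textbf{Variance term.} Use the same expansion, $g_h^{\mathrm{G}}(x,u)=\langle\nabla F(x),u\rangle u+\frac{r}{h}u$, together with $\|a+b\|^2\le2\|a\|^2+2\|b\|^2$. The first piece gives $\mathbb{E}[\langle\nabla F(x),u\rangle^2\|u\|^2]=(d+2)\|\nabla F(x)\|^2\le (d+2)\|\nabla F(x)\|^2$. The second piece gives $\frac{M^2h^2}{4}\mathbb{E}\|u\|^6=\frac{M^2h^2}{4}d(d+2)(d+4)$. Next we insert Lemma~\ref{lem:quad_growth}(ii), $\|\nabla F(x)\|^2\le 2M^2\|x\|^2+2B^2$. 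After division by $s$, the first piece is at most $\frac{d(d+2)}{s}(4M^2\|x\|^2+4B^2)$, using $d+2\le d(d+2)$. Because $s\ge1$ and $h\le1$, the second piece divided by $s$ is absorbed into the $d(d+2)(d+4)h^2$ term with constant $M^2/2$.

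Collecting both parts, the total error is at most
\[
\delta_{\mathrm G}\Big(4M^2\|x\|^2+4B^2+M^2\Big),
\]
where $M^2$ bounds the sum of the bias constant $M^2/4$ and the variance-remainder constant $M^2/2$. So we may take $P_{\mathrm G}=4M^2$ and $Q_{\mathrm G}=4B^2+M^2$, both independent of $d,\beta,\gamma,h,s$. The main obstacle is bookkeeping of dimension factors: the natural variance constant is $d+2$, not $d(d+2)$. The stated $\delta_{\mathrm G}$ is a valid, looser upper bound, and we only need to check that every dimension factor is dominated by one of the two summands of $\delta_{\mathrm G}$, which the moment identities $\mathbb{E}\|u\|^4=d(d+2)$ and $\mathbb{E}\|u\|^6=d(d+2)(d+4)$ guarantee.
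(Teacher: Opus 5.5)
Your argument is correct. It shares the paper's skeleton: a split into the $1/s$-variance of $g_h^{\mathrm{G}}(x,u)$ plus the squared smoothing bias, the variance bounded by the second moment, and the Gaussian moments $\mathbb{E}\|u\|^4=d(d+2)$ and $\mathbb{E}\|u\|^6=d(d+2)(d+4)$. However, you obtain the two core estimates differently.

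The paper bounds the bias via $\nabla F_h^{\mathrm{G}}(x)=\mathbb{E}[\nabla F(x+hu)]$ and the Lipschitz gradient, which gives $Mh\sqrt d$. It bounds the second moment from the integral form of the difference quotient, using $|\langle\nabla F(x+thu),u\rangle|\le\|\nabla F(x+thu)\|\,\|u\|$, i.e.\ $\|g_h^{\mathrm{G}}(x,u)\|\le(M\|x\|+B+Mh\|u\|)\|u\|^2$. This crude Cauchy--Schwarz step is exactly where the factor $d(d+2)$ in front of $1/s$ comes from. The paper also uses a lossy factor-$2$ splitting that rests on $\mathbb{E}[g_h^{\mathrm{G}}]=\nabla F_h^{\mathrm{G}}$, and ends with $P_{\mathrm G}=6M^2$, $Q_{\mathrm G}=6B^2+8M^2$.

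You instead separate the linear part $\langle\nabla F(x),u\rangle u$ from the descent-lemma remainder and use the exact identity $\mathbb{E}[\langle a,u\rangle^2\|u\|^2]=(d+2)\|a\|^2$. Because the mean follows directly from $\mathbb{E}[uu^\top]=I$, you need neither Gaussian integration by parts nor the crude Cauchy--Schwarz step, and the exact bias--variance identity also saves the factor $2$.

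The trade-offs are as follows.
\begin{itemize}
\item Your bias bound, of order $Mh\,d^{3/2}$, is worse than the paper's $Mh\sqrt d$. It fits only because $\delta_{\mathrm G}$ already carries the term $d(d+2)(d+4)h^2$.
\item Your variance bound is sharper by a factor $d$ and gives smaller constants $P_{\mathrm G}=4M^2$, $Q_{\mathrm G}=4B^2+M^2$. This is harmless downstream, since a smaller $P$ only weakens the requirement $\delta\le m^2/(4P)$, so $\delta_{\mathrm{ZO}}\le m^2/(24M^2)$ still suffices.
\item Your own collection of terms already proves the bound with $\frac{d+2}{s}+d(d+2)(d+4)h^2$ in place of $\delta_{\mathrm G}$, with the same $P_{\mathrm G},Q_{\mathrm G}$. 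This shows that the batch term of $\delta_{\mathrm G}$ is loose by a factor $d$. It would cut the batch size in Corollary~\ref{cor:ZO_complexity}, and hence the total number of function evaluations, by a factor $d$.
\end{itemize}
Pairing your variance estimate with the paper's sharper bias estimate would combine the advantages of both arguments.
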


\begin{proof}
Since $g_{h,s}^{\mathrm{G}}$ is an average of $s$ independent copies of $g_h^{\mathrm{G}}$, we have
\[
\begin{aligned}
    \mathbb{E}\left[
        \left\|g_{h,s}^{\mathrm{G}}(x)-\nabla F(x)\right\|^2
    \right]
    &\leq
    \frac{2}{s}\mathbb{E}\left[
        \left\|g_h^{\mathrm{G}}(x,u)-\nabla F_h^{\mathrm{G}}(x)\right\|^2
    \right]
    +
    2\left\|\nabla F_h^{\mathrm{G}}(x)-\nabla F(x)\right\|^2        \\
    &\leq
    \frac{2}{s}\mathbb{E}\left[
        \left\|g_h^{\mathrm{G}}(x,u)\right\|^2
    \right]
    +
    2\left\|\nabla F_h^{\mathrm{G}}(x)-\nabla F(x)\right\|^2 .
\end{aligned}
\]
By $M$-smoothness,
\[
    \left\|\nabla F_h^{\mathrm{G}}(x)-\nabla F(x)\right\|
    =
    \left\|
        \mathbb{E}\left[\nabla F(x+h u)-\nabla F(x)\right]
    \right\|
    \leq
    Mh\,\mathbb{E}\|u\|
    \leq
    Mh\sqrt{d}.
\]
Therefore, $2\left\|\nabla F_h^{\mathrm{G}}(x)-\nabla F(x)\right\|^2
    \leq
    2M^2h^2d$. Moreover, using the fundamental theorem of calculus it holds
\[
    \frac{F(x+h u)-F(x)}{h}
    =
    \int_0^1 \langle \nabla F(x+t h u),u\rangle\,dt .
\]
Therefore,
\[
\begin{aligned}
    \left\|g_h^{\mathrm{G}}(x,u)\right\|\leq
    \int_0^1
    \|\nabla F(x+t h u)\|\|u\|^2\,dt\leq
    \left(M\|x\|+B+Mh\|u\|\right)\|u\|^2.
\end{aligned}
\]
Squaring and using $(a+b+c)^2\leq 3(a^2+b^2+c^2)$, we obtain
\[
    \left\|g_h^{\mathrm{G}}(x,u)\right\|^2
    \leq
    3(M^2\|x\|^2+B^2)\|u\|^4
    +
    3M^2h^2\|u\|^6 .
\]
Since $\mathbb{E}\|u\|^4=d(d+2)$ and $\mathbb{E}\|u\|^6=d(d+2)(d+4)$, we get
\[
    \mathbb{E}\left[
        \left\|g_h^{\mathrm{G}}(x,u)\right\|^2
    \right]
    \leq
    3(M^2\|x\|^2+B^2)d(d+2)
    +
    3M^2h^2d(d+2)(d+4).
\]
Combining the previous estimates gives
\[
\begin{aligned}
    \mathbb{E}\left[
        \left\|g_{h,s}^{\mathrm{G}}(x)-\nabla F(x)\right\|^2
    \right]
    &\leq
    \frac{6d(d+2)}{s}(M^2\|x\|^2+B^2)
    +
    \frac{6M^2h^2d(d+2)(d+4)}{s}
    +
    2M^2h^2d .
\end{aligned}
\]
Since $s\geq 1$ and $d\leq d(d+2)(d+4)$, we have
\[
    \frac{h^2d(d+2)(d+4)}{s}
    \leq
    h^2d(d+2)(d+4),
    \qquad
    h^2d
    \leq
    h^2d(d+2)(d+4).
\]
Thus,
\[
\begin{aligned}
    \mathbb{E}\left[
        \left\|g_{h,s}^{\mathrm{G}}(x)-\nabla F(x)\right\|^2
    \right]
    &\leq
    \left(
        \frac{d(d+2)}{s}
        +
        d(d+2)(d+4)h^2
    \right)
    \left(6M^2\|x\|^2+6B^2+8M^2\right).
\end{aligned}
\]
Hence the claim follows by setting $P_{\mathrm{G}}:=6M^2$ and $Q_{\mathrm{G}}:=6B^2+8M^2$.
\end{proof}

\paragraph{Spherical finite-difference estimator.}

We can alternatively use a spherical estimator. Let $u$ be uniformly distributed on the Euclidean sphere $\mathbb{S}^{d-1}$ and define the one-sided spherical finite-difference estimator
\[
    g_h^{\mathrm{S}}(x,u)
    :=
    \frac{d}{h}\left(F(x+h u)-F(x)\right)u.
\]
Given independent directions $u_1,\ldots,u_s\sim \mathrm{Unif}(\mathbb{S}^{d-1})$, set
\[
    g_{h,s}^{\mathrm{S}}(x)
    :=
    \frac{1}{s}\sum_{i=1}^s g_h^{\mathrm{S}}(x,u_i).
\]
This estimator is unbiased for the gradient of the ball-smoothed objective \cite{Rando2023,flaxman2005online}
\[
    F_h^{\mathrm{S}}(x)
    :=
    \mathbb{E}_{v\sim \mathrm{Unif}(\mathbb{B}^d)}[F(x+h v)].
\]

\begin{lemma}[Spherical zeroth-order estimator]\label{lem:spherical_ZO_error}
Assume that $F$ is $M$-smooth. Let $g_{h,s}^{\mathrm{S}}$ be defined as above and assume $h\leq 1$. Then, for every $x\in\mathbb{R}^d$,
\[
    \mathbb{E}\left[
        \left\|g_{h,s}^{\mathrm{S}}(x)-\nabla F(x)\right\|^2
    \right]
    \leq
    \delta_{\mathrm{S}}
    \left(P_{\mathrm{S}}\|x\|^2+Q_{\mathrm{S}}\right),
\]
where
\begin{equation}\label{eq:deltaS_def}
    \delta_{\mathrm{S}}
    :=
    \frac{d^2}{s}+h^2,
\end{equation}
and $P_{\mathrm{S}},Q_{\mathrm{S}}$ are constants independent of $d,\beta,\gamma,h,s$ and explicit in the proof. In particular, Assumption~\ref{ass:gradient_error} holds with $\delta=\delta_{\mathrm{S}}$, $P=P_{\mathrm{S}}$ and $Q=Q_{\mathrm{S}}$.
\end{lemma}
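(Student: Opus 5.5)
I will mirror the proof of Lemma~\ref{lem:gaussian_ZO_error}, splitting the error into a variance term around the smoothed gradient and a smoothing-bias term. Since $g_{h,s}^{\mathrm{S}}$ is an average of $s$ i.i.d.\ copies of $g_h^{\mathrm{S}}(x,u)$, each with mean $\nabla F_h^{\mathrm{S}}(x)$, I will write
\[
\mathbb{E}\left\|g_{h,s}^{\mathrm{S}}(x)-\nabla F(x)\right\|^2
\leq \frac{2}{s}\,\mathbb{E}\left\|g_h^{\mathrm{S}}(x,u)\right\|^2 + 2\left\|\nabla F_h^{\mathrm{S}}(x)-\nabla F(x)\right\|^2 .
\]
This uses the bias--variance split and then $\mathbb{E}\|X-\mathbb{E}X\|^2\le \mathbb{E}\|X\|^2$.

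\emph{Bias term.} I will differentiate under the integral, which is justified because $F$ is $C^1$ with a Lipschitz gradient. This gives $\nabla F_h^{\mathrm{S}}(x)=\mathbb{E}_{v\sim\mathrm{Unif}(\mathbb{B}^d)}[\nabla F(x+hv)]$. By $M$-smoothness and $\|v\|\le 1$, the bias is at most $Mh$. Its squared contribution is therefore at most $2M^2h^2$, with no dimensional factor; this is why $\delta_{\mathrm{S}}$ contains a bare $h^2$.

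\emph{Variance term.} By the fundamental theorem of calculus and the fact that $\|u\|=1$,
\[
\|g_h^{\mathrm{S}}(x,u)\| = d\left|\int_0^1\langle\nabla F(x+thu),u\rangle dt\right| \le d\left(M\|x\|+B+Mh\right),
\]
where I use Lemma~\ref{lem:quad_growth}(ii) and $t\le 1$. Squaring with $(a+b+c)^2\le 3(a^2+b^2+c^2)$ and using $h\le 1$ gives
\[
\mathbb{E}\|g_h^{\mathrm{S}}\|^2\le 3d^2(M^2\|x\|^2+B^2+M^2).
\]
The variance contribution is then at most $\frac{6d^2}{s}(M^2\|x\|^2+B^2+M^2)$.

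\emph{Combining.} Both $\frac{d^2}{s}$ and $h^2$ are at most $\delta_{\mathrm{S}}$, so the total is bounded by $\delta_{\mathrm{S}}(6M^2\|x\|^2+6B^2+8M^2)$. This gives $P_{\mathrm{S}}=6M^2$ and $Q_{\mathrm{S}}=6B^2+8M^2$, neither of which depends on $d,\beta,\gamma,h,s$.

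The only delicate point is the unbiasedness identity $\mathbb{E}[g_h^{\mathrm{S}}(x,u)]=\nabla F_h^{\mathrm{S}}(x)$, which follows from the divergence theorem on the ball (the Flaxman et al.\ identity). The $-F(x)u$ term has zero mean by symmetry of $u$. Since the paper already asserts this identity with citations, I will simply invoke it.
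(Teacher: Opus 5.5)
Your proposal is correct and follows the paper's proof essentially step by step: the same bias--variance split around $\nabla F_h^{\mathrm{S}}$, the dimension-free bias bound $Mh$, and the bound $\|g_h^{\mathrm{S}}(x,u)\|\le d(M\|x\|+B+Mh)$ combined with $h\le 1$. The only difference is cosmetic: the paper uses $2M^2h^2\le 6M^2h^2$ to fold the bias term into the $h^2$ part of $\delta_{\mathrm{S}}$, giving $Q_{\mathrm{S}}=6B^2+6M^2$, whereas your $Q_{\mathrm{S}}=6B^2+8M^2$ is slightly looser but equally valid (and $P_{\mathrm{S}}=6M^2$, the constant used later in Corollary~\ref{cor:ZO_complexity}, is the same).
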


\begin{proof}
Since $g_{h,s}^{\mathrm{S}}$ is an average of $s$ independent copies of $g_h^{\mathrm{S}}$, we have
\[
\begin{aligned}
    \mathbb{E}\left[
        \left\|g_{h,s}^{\mathrm{S}}(x)-\nabla F(x)\right\|^2
    \right]
    &\leq
    \frac{2}{s}
    \mathbb{E}\left[
        \left\|g_h^{\mathrm{S}}(x,u)-\nabla F_h^{\mathrm{S}}(x)\right\|^2
    \right]
    +
    2\left\|\nabla F_h^{\mathrm{S}}(x)-\nabla F(x)\right\|^2        \\
    &\leq
    \frac{2}{s}
    \mathbb{E}\left[
        \left\|g_h^{\mathrm{S}}(x,u)\right\|^2
    \right]
    +
    2\left\|\nabla F_h^{\mathrm{S}}(x)-\nabla F(x)\right\|^2 .
\end{aligned}
\]
For the bias term, since $v\in\mathbb{B}^d$ and $F$ is $M$-smooth,
\[
    \left\|\nabla F_h^{\mathrm{S}}(x)-\nabla F(x)\right\|
    =
    \left\|
        \mathbb{E}_{v}\left[\nabla F(x+h v)-\nabla F(x)\right]
    \right\|
    \leq
    Mh\,\mathbb{E}\|v\|
    \leq
    Mh.
\]
For the second moment, by the fundamental theorem of calculus,
\[
    \frac{F(x+h u)-F(x)}{h}
    =
    \int_0^1
    \langle \nabla F(x+t h u),u\rangle\,dt .
\]
Since $\|u\|=1$, we obtain
\[
\begin{aligned}
    \left\|g_h^{\mathrm{S}}(x,u)\right\|\leq
    d\sup_{t\in[0,1]}\|\nabla F(x+t h u)\|\leq
    d(M\|x\|+B+Mh).
\end{aligned}
\]
Therefore,
\[
    \mathbb{E}\left[
        \left\|g_h^{\mathrm{S}}(x,u)\right\|^2
    \right]
    \leq
    3d^2(M^2\|x\|^2+B^2+M^2h^2).
\]
Since $h\leq 1$, we have
\[
    \mathbb{E}\left[
        \left\|g_h^{\mathrm{S}}(x,u)\right\|^2
    \right]
    \leq
    3d^2(M^2\|x\|^2+B^2+M^2).
\]
Combining this estimate with the bias bound gives
\[
\begin{aligned}
    \mathbb{E}\left[
        \left\|g_{h,s}^{\mathrm{S}}(x)-\nabla F(x)\right\|^2
    \right]
    &\leq
    \frac{6d^2}{s}(M^2\|x\|^2+B^2+M^2)
    +
    2M^2h^2        \\
    &\leq
    \left(\frac{d^2}{s}+h^2\right)
    \left(6M^2\|x\|^2+6B^2+6M^2\right).
\end{aligned}
\]
Hence the claim follows by setting $P_{\mathrm{S}}:=6M^2$, and $Q_{\mathrm{S}}:=6B^2+6M^2$.
\end{proof}

\paragraph{Zeroth-order Langevin algorithm.}

The zeroth-order Langevin scheme is obtained by replacing the gradient in Algorithm~\ref{alg:I-ULA} with either $g_{h,s}^{\mathrm{G}}$ or $g_{h,s}^{\mathrm{S}}$. Namely,
\[
    x_{k+1}
    =
    x_k-\gamma g_{h,s}^{\mathrm{ZO}}(x_k)
    +
    \sqrt{2\gamma\beta^{-1}}\,z_k,
    \qquad
    z_k\sim\mathcal{N}(0,I_d),
\]
where $g_{h,s}^{\mathrm{ZO}}$ denotes either the Gaussian or the spherical estimator.

\begin{algorithm}[H]
\caption{Zeroth-order ULA}\label{alg:ZO-ULA}
\begin{algorithmic}
  \STATE{{\bf Input:} $x_0\sim\mu_0$, $\beta,\gamma,h\in\mathbb{R}_+$, batch size $s\in\mathbb{N}$}
     \FOR{$k = 0,1,\dots$}
        \STATE{sample independent random directions $u_{k,1},\ldots,u_{k,s}$ from $\mathcal{N}(0, I_d)$ or $\mathrm{Unif}(\mathbb{S}^{d - 1})$}
        \STATE{construct $g_{h,s}^{\mathrm{ZO}}(x_k)$ using either the Gaussian or spherical estimator}
        \STATE{sample an independent $z_k\sim\mathcal{N}(0,I_d)$}
        \STATE{$x_{k+1}=x_k-\gamma g_{h,s}^{\mathrm{ZO}}(x_k)+\sqrt{2\gamma\beta^{-1}}z_k$}
     \ENDFOR
\end{algorithmic}
\end{algorithm}

The previous lemmas show that Algorithm~\ref{alg:ZO-ULA} is a particular instance of Algorithm~\ref{alg:I-ULA}. Hence, Theorem~\ref{thm:main_bound_I} applies directly.

\begin{theorem}[Zeroth-order ULA]\label{thm:ZO_ULA}
Assume that $F$ satisfies Assumptions~\ref{ass:smoothness} and \ref{ass:dissipativity}, and let $\mu_0$ satisfy Assumption~\ref{ass:mu_0}. Let $x_k$ be generated by Algorithm~\ref{alg:ZO-ULA} with either the Gaussian or spherical estimator. Then, if $\delta_{\mathrm{ZO}}\leq \frac{m^2}{24M^2}$ (with $\delta_{\mathrm{ZO}}=\delta_{\mathrm{G}}$ for the Gaussian estimator or $\delta_{\mathrm{ZO}}=\delta_{\mathrm{S}}$ for the spherical one), the bound of Theorem~\ref{thm:main_bound_I} holds with $\delta=\delta_{\mathrm{ZO}}$.
\end{theorem}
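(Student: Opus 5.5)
The plan is to reduce Theorem~\ref{thm:ZO_ULA} directly to Theorem~\ref{thm:main_bound_I}, by checking that Algorithm~\ref{alg:ZO-ULA} is an instance of Algorithm~\ref{alg:I-ULA} satisfying all hypotheses there. Take $\xi_k=(u_{k,1},\dots,u_{k,s})$, drawn i.i.d.\ across iterations and independent of $z_k$ and $x_0$. Then define $g(x,\xi):=g^{\mathrm{ZO}}_{h,s}(x)$, built from the directions in $\xi$.

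With this choice the zeroth-order update coincides verbatim with the inexact ULA update. The structural assumptions on $F$ and $\mu_0$ are the same in both theorems. The parameter conditions on $\beta$ and $\gamma$ are understood to be inherited from Theorem~\ref{thm:main_bound_I}, and the statement implicitly keeps $h\le 1$ as in the lemmas.

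Next, verify Assumption~\ref{ass:gradient_error}. For the Gaussian estimator invoke Lemma~\ref{lem:gaussian_ZO_error}, and for the spherical estimator Lemma~\ref{lem:spherical_ZO_error}. Both give the bound with $\delta=\delta_{\mathrm{ZO}}$ and $P=6M^2$. The constant $Q$ is $6B^2+8M^2$ in the Gaussian case and $6B^2+6M^2$ in the spherical case. These constants are independent of $d,\beta,\gamma,h,s$, so the constants $C_2$ (through $\Gamma$ and $P,Q$) and $C_0'$ of Theorem~\ref{thm:main_bound_I} remain independent of $d,\beta,\gamma,\delta$, as required.

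The only remaining hypothesis is the precision constraint $\delta\le m^2/(4P)$. With $P=6M^2$ this reads
\[
\delta_{\mathrm{ZO}}\le \frac{m^2}{24M^2},
\]
which is exactly the assumed condition. The same constraint is what Proposition~\ref{prop:iter-norm-bound-final} needs to produce the uniform moment bound $\Gamma$, and hence Proposition~\ref{prop:discrete_schemes_distance_ST}.

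The one step needing care is the independence structure used in Proposition~\ref{prop:discrete_schemes_distance_ST}. The argument of \cite[Lemma 4.4]{XCZG2018} only requires that, conditionally on $x_k$, the random error $g(x_k,\xi_k)-\nabla F(x_k)$ has second moment bounded by $\delta(P\|x_k\|^2+Q)$. Here this holds because the fresh directions are independent of $x_k$, so the lemmas apply pointwise in $x_k$.

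With all hypotheses checked, Theorem~\ref{thm:main_bound_I} yields the stated bound with $\delta=\delta_{\mathrm{ZO}}$.
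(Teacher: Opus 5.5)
Your proposal is correct and is essentially the paper's proof: you verify Assumption~\ref{ass:gradient_error} via Lemmas~\ref{lem:gaussian_ZO_error} and~\ref{lem:spherical_ZO_error} with $P=6M^2$, so the stability condition $\delta\le m^2/(4P)$ becomes exactly $\delta_{\mathrm{ZO}}\le m^2/(24M^2)$, and then apply Theorem~\ref{thm:main_bound_I}. Your extra bookkeeping is accurate and is left implicit in the paper; it covers the requirement $h\le 1$ from the lemmas, the inherited conditions on $\beta$ and $\gamma$, and the fact that the resulting constants are independent of $d,\beta,\gamma,\delta$.
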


\begin{proof}
By Lemma~\ref{lem:gaussian_ZO_error}, the Gaussian estimator satisfies Assumption~\ref{ass:gradient_error} with $\delta=\delta_{\mathrm{G}}$. By Lemma~\ref{lem:spherical_ZO_error}, the spherical estimator satisfies Assumption~\ref{ass:gradient_error} with $\delta=\delta_{\mathrm{S}}$. The claim follows by applying Theorem~\ref{thm:main_bound_I}.
\end{proof}

\begin{corollary}[Complexity of zeroth-order ULA]\label{cor:ZO_complexity}
Assume that $F$ satisfies Assumptions~\ref{ass:smoothness} and \ref{ass:dissipativity}, and let $\mu_0$ satisfy Assumption~\ref{ass:mu_0}. Fix $\epsilon\in(0,1)$. Choose
\[
    \beta=\tilde\Theta\left(\frac{d}{\epsilon}\right),
    \qquad
    \gamma=
    \Theta\left(
        \frac{\epsilon^2}{\beta d\CLS(\beta,d)}
    \right),
\]
and
\[
    k=
    \tilde O\left(
        \frac{\beta^2 d\CLS(\beta,d)^2}{\epsilon^2}
    \right).
\]
For the Gaussian estimator, it is sufficient to choose
\begin{equation}\label{eq:sandh_ZO_G}
    s=
    \tilde\Theta\left(
        \frac{d^2\beta^2\CLS(\beta,d)}{\epsilon^2}
    \right),
    \qquad
    h=
    \tilde\Theta\left(
        \frac{\epsilon}{\beta d^{3/2}\sqrt{\CLS(\beta,d)}}
    \right).
\end{equation}
For the spherical estimator, it is sufficient to choose
\begin{equation}\label{eq:sandh_ZO_S}
     s=
    \tilde\Theta\left(
        \frac{d^2\beta^2\CLS(\beta,d)}{\epsilon^2}
    \right),
    \qquad
    h=
    \tilde\Theta\left(
        \frac{\epsilon}{\beta\sqrt{\CLS(\beta,d)}}
    \right).
\end{equation}
With these choices, $\mathbb{E}[F(x_k)]-\min F\leq \epsilon$. Consequently, the total number of function evaluations is
\[
    ks=\tilde O\left(
        \frac{\beta^4 d^3\CLS(\beta,d)^3}{\epsilon^4}
    \right).
\]
\end{corollary}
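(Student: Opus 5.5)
The plan is to reduce the statement to Corollary~\ref{cor:main_I} via Theorem~\ref{thm:ZO_ULA}, which says that Algorithm~\ref{alg:ZO-ULA} is an instance of Algorithm~\ref{alg:I-ULA} with $\delta=\delta_{\mathrm{ZO}}$, $P=6M^2$, and $Q$ equal to $Q_{\mathrm G}$ or $Q_{\mathrm S}$. These constants are independent of $d,\beta,\gamma,h,s$, so the constants $C_0,C_0',C_1,C_2$ of Theorem~\ref{thm:main_bound_I} remain dimension-free. I split $\epsilon=\sum_{i=1}^4\epsilon_i$ with $\epsilon_i=\epsilon/4$. Then $\beta$, $\gamma$ and $k$ are chosen exactly as in the proof of Corollary~\ref{cor:main}, giving $\beta=\tilde\Theta(d/\epsilon)$, $\gamma=\Theta(\epsilon^2/(\beta d\CLS))$ and $k=\tilde O(\beta^2 d\CLS^2/\epsilon^2)$. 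This controls the Gibbs term, the discretization term and the exponential term by $\epsilon_1,\epsilon_2,\epsilon_3$.

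The remaining task is to pick $s$ and $h$ so that
\[
\delta_{\mathrm{ZO}}\le \delta^\star:=\min\left\{\frac{m^2}{24M^2},\ \frac{\epsilon_4^2}{4(M+1)^2(C_0')^2C_2\gamma k\beta}\right\}.
\]
The first entry is the constraint of Theorem~\ref{thm:ZO_ULA}, equivalently $\delta\le m^2/(4P)$. The second gives $(M+1)C_0'(\sqrt{C_2\gamma k\beta\delta}+C_2\gamma k\beta\delta/2)\le\epsilon_4$, exactly as in Corollary~\ref{cor:main_I}. I will check that, as in the proof of Corollary~\ref{cor:main_MB}, the second entry is automatically below the first because $C_0'\ge 8/m$; otherwise I simply keep the minimum, which is harmless since $m,M$ are fixed. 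Since $\gamma k=O(\beta\CLS\log(\beta d/\epsilon))$, we get $\gamma k\beta=\tilde O(\beta^2\CLS)$, and hence $\delta^\star=\tilde\Theta(\epsilon^2/(\beta^2\CLS))$, as in Corollary~\ref{cor:main_I}.

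Next I split $\delta_{\mathrm{ZO}}$ into its two summands and require each to be at most $\delta^\star/2$. For the Gaussian estimator, \eqref{eq:deltaG_def} gives the conditions $d(d+2)/s\le\delta^\star/2$ and $d(d+2)(d+4)h^2\le\delta^\star/2$. These yield $s=\tilde\Theta(d^2\beta^2\CLS/\epsilon^2)$ and $h=\tilde\Theta(\epsilon/(\beta d^{3/2}\sqrt{\CLS}))$, which is \eqref{eq:sandh_ZO_G}. For the spherical estimator, \eqref{eq:deltaS_def} gives $d^2/s\le\delta^\star/2$ and $h^2\le\delta^\star/2$, so $s$ has the same order and $h=\tilde\Theta(\epsilon/(\beta\sqrt{\CLS}))$, which is \eqref{eq:sandh_ZO_S}. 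In both cases I must also confirm $h\le 1$, as required by Lemmas~\ref{lem:gaussian_ZO_error} and~\ref{lem:spherical_ZO_error}. This holds because $\epsilon<1$, $\beta\ge d\ge1$, and $\CLS$ can be assumed at least an absolute constant (otherwise take the minimum with $1$). Finally, each iteration uses $s+1$ function evaluations, or $s$ if $F(x_k)$ is reused across directions, so the total is $ks=\tilde O(\beta^4 d^3\CLS^3/\epsilon^4)$.

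The main obstacle is a possible circularity: $\delta^\star$ depends on $k$, while the constants $C_2$ and $\Gamma$ depend on $P,Q$. This is harmless here because $P$ and $Q$ do not depend on $s$ or $h$, and $k$ is fixed before $\delta$. The only real checks are the bookkeeping of logarithmic factors and the admissibility constraints $\delta\le m^2/(24M^2)$ and $h\le1$.
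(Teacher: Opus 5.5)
Your proposal is correct and follows the paper's proof essentially step for step. You reduce to Corollary~\ref{cor:main_I} via Theorem~\ref{thm:ZO_ULA}, use $P=6M^2$ and $C_0'\ge 8/m$ to make the stability constraint $\delta\le m^2/(24M^2)$ automatic, and split $\delta_{\mathrm{ZO}}$ into two halves to read off $s$ and $h$. Two minor points. The check $h\le 1$ needs no assumption on $\CLS(\beta,d)$, since it follows directly from $h^2\le\delta^\star/2\le m^2/(48M^2)$. The per-step cost is $s+1$ evaluations when $F(x_k)$ is reused, and $2s$ without reuse, not $s$; this does not change the order of $ks$.
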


\begin{proof}
The proof follows from Corollary~\ref{cor:main_I}. The exact-gradient part of the error is controlled by the same choices of $\beta$, $\gamma$, and $k$ used there. It remains to choose $s$ and $h$ so that the zeroth-order precision parameter satisfies
\begin{equation}\label{eq:ZO_delta_requirement}
    \delta_{\mathrm{ZO}}
    \leq
    \frac{\epsilon_4^2}
    {4(M+1)^2(C_0')^2 C_2\gamma k \beta},
\end{equation}
with $C_0'$ as in \eqref{eq:C0prime_def} and $C_2$ as in \eqref{eq:C2_def}. Notice that, in both the Gaussian and spherical cases, the constant $P$ in Assumption~\ref{ass:gradient_error} can be taken to be $P=6M^2$. Therefore, since $C_0'\geq 8/m$, condition \eqref{eq:ZO_delta_requirement} implies
\[
    \delta_{\mathrm{ZO}}
    \leq
    \frac{1}{4(M+1)^2(C_0')^2}
    \leq
    \frac{m^2}{256(M+1)^2}
    \leq
    \frac{m^2}{256M^2}
    \leq
    \frac{m^2}{24M^2}
    =
    \frac{m^2}{4P}.
\]
Thus the stability condition required in Corollary~\ref{cor:main_I} is automatically satisfied. Set
\[
    \Delta_{\mathrm{ZO}}
    :=
    \frac{\epsilon_4^2}
    {4(M+1)^2(C_0')^2 C_2\gamma k \beta}.
\]
Since $\gamma k\beta
    =
    \tilde O\left(\beta^2\CLS(\beta,d)\right)$,
we have $\Delta_{\mathrm{ZO}}
    =
    \tilde \Theta\left(
        \frac{\epsilon^2}{\beta^2\CLS(\beta,d)}
    \right)$. For the Gaussian estimator, Lemma~\ref{lem:gaussian_ZO_error} gives $\delta_{\mathrm{G}}
    =
    \frac{d(d+2)}{s}
    +
    d(d+2)(d+4)h^2$. Thus it is enough to impose $\frac{d(d+2)}{s}\leq \frac{\Delta_{\mathrm{ZO}}}{2}$ and $
    d(d+2)(d+4)h^2\leq \frac{\Delta_{\mathrm{ZO}}}{2}$. Equivalently, it is sufficient to choose $s\geq \frac{2d(d+2)}{\Delta_{\mathrm{ZO}}}$ and $
    h\leq
    \sqrt{
        \frac{\Delta_{\mathrm{ZO}}}
        {2d(d+2)(d+4)}
    }$. Since $d(d+2)=\Theta(d^2)$ and $d(d+2)(d+4)=\Theta(d^3)$, this gives \eqref{eq:sandh_ZO_G}. For the spherical estimator, Lemma~\ref{lem:spherical_ZO_error} gives $\delta_{\mathrm{S}}
    =
    \frac{d^2}{s}+h^2$.
Hence it is enough to impose $\frac{d^2}{s}\leq \frac{\Delta_{\mathrm{ZO}}}{2}$ and $ h^2\leq \frac{\Delta_{\mathrm{ZO}}}{2}$. Equivalently, it is sufficient to choose $s\geq \frac{2d^2}{\Delta_{\mathrm{ZO}}}$ and $h\leq \sqrt{\frac{\Delta_{\mathrm{ZO}}}{2}}$. Using the expression of $\Delta_{\mathrm{ZO}}$, this gives \eqref{eq:sandh_ZO_S}. With these choices, condition \eqref{eq:ZO_delta_requirement} holds, and therefore Corollary~\ref{cor:main_I} gives
\[
    \mathbb{E}[F(x_k)]-\min F\leq \epsilon.
\]
Finally, both the Gaussian and the one-sided spherical estimator require $s+1$ function evaluations per Langevin step, since the value $F(x_k)$ can be computed once and reused for all directions. Hence the total number of function evaluations is, up to leading order,
\[
    k s
    =
    \tilde O\left(
        \frac{\beta^2 d\CLS(\beta,d)^2}{\epsilon^2}
    \right)
    \tilde O\left(
        \frac{d^2\beta^2\CLS(\beta,d)}{\epsilon^2}
    \right)
    =
    \tilde O\left(
        \frac{\beta^4 d^3\CLS(\beta,d)^3}{\epsilon^4}
    \right),
\]
which concludes the proof.
\end{proof}

\begin{remark}[Gaussian versus spherical smoothing]
Both the Gaussian and the one-sided spherical estimators considered above have the same leading-order oracle cost. Indeed, for a batch of $s$ directions, the value $F(x)$ can be computed once and reused, so both estimators require $s+1$ function evaluations per Langevin step in the deterministic zeroth-order setting. The main difference between the two estimators is the dependence of the smoothing error on the dimension. The spherical estimator allows a larger smoothing radius $h$ for the same target precision, which can be preferable in practice, especially when function evaluations are noisy or affected by numerical precision errors.
\end{remark}

\paragraph{Comparison with existing zeroth-order Langevin guarantees.}

To the best of our knowledge, there are no directly comparable expected-excess-risk bounds for zeroth-order Langevin optimization under the same smooth dissipative assumptions. We therefore compare with the closest zeroth-order Langevin sampling results in Appendix~\ref{app:comparisons_ZO}.

\section{Experiments} \label{sec:experiments}

In this section, we provide numerical experiments to investigate how the parameters of Algorithm \ref{alg:ZO-ULA} affect its practical performance. We consider three standard synthetic non-convex target functions, namely the Ackley function, the Rastrigin function, and the Levy function. We use spherical finite-difference estimator, fix a budget of $10^6$ function evaluations and consider different dimensions $d = 10, 50,$ and $100$.  Each experiment is repeated $5$ times with different initializations $x_0$, and we report the mean and standard deviation across repetitions; further details are given in Appendix~\ref{app:exp_details}. Figure~\ref{fig:performance_gamma} reports the normalized average optimality gap, computed at the best $x_k$ observed over the last $100$ iterations, $(F(x_k) - \min F)/(F(x_0) - \min F)$, as a function of the stepsize $\gamma$, for different numbers of directions $s$ and different input dimensions $d$ (the gap is clipped to $1$ in case of divergence). In every experiment, the exploration parameter $\beta$ is selected via grid search.

\begin{figure}[H]
    \centering
    \includegraphics[width=0.75\linewidth]{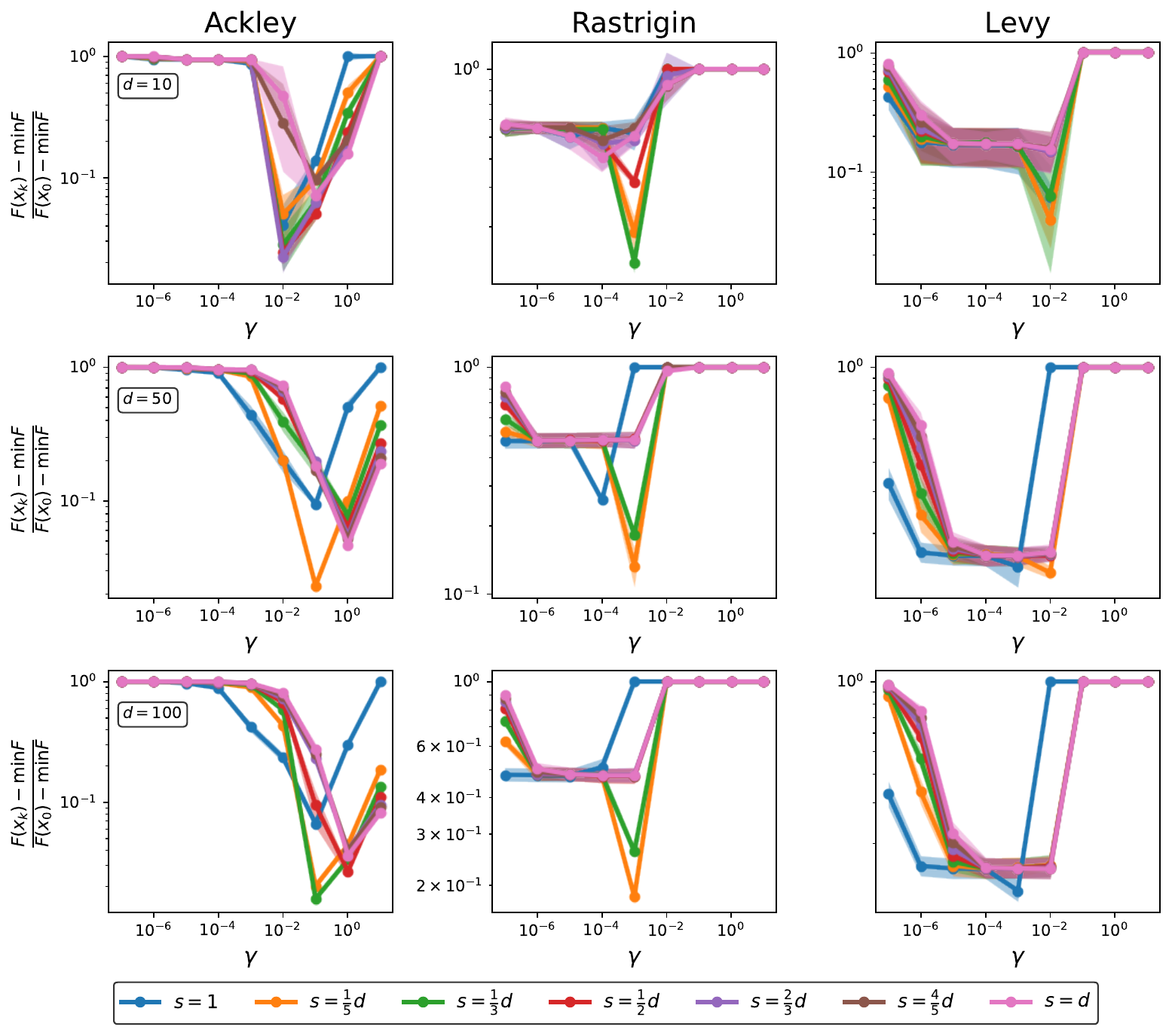}
    \caption{Averaged normalized optimality gap at the best iterate over the last $100$ iterations, as a function of the stepsize $\gamma$, for varying number of directions $s$. Rows correspond to increasing input dimension $d$ (top to bottom: $d=10, 50, 100$), while columns correspond to the different target functions (left to right: Ackley, Rastrigin, Levy).}
    \label{fig:performance_gamma}
\end{figure}

We observe that both extremes of $\gamma$ provide bad performance: for very small stepsizes the algorithm makes negligible progress within the fixed evaluation budget, so the optimality gap remains close to $1$, whereas for very large stepsizes the algorithm diverges, and the gap is clipped to $1$. 
Increasing the number of directions $s$ increases the largest admissible stepsize, since the gradient estimator becomes more stable, and consequently improves overall performance. However, if $s$ is too large, performance may deteriorate. As the budget of function evaluations is fixed, a larger $s$ implies a higher number of evaluation per iteration, which in turn reduces the total number of iterations the algorithm can perform. Since the target functions are non-convex, the algorithm can become trapped near stationary points and may require many iterations to escape them via the exploration component (such an effect is particularly visible for the Rastrigin function). This is coherent with previous observations on finite-difference methods, where increasing the number of directions similarly improves gradient estimate quality, allows for larger stepsizes, and comes at the expense of the number of iterations affordable under a fixed evaluation budget.  
Figure~\ref{fig:performance_beta} reports the same normalized average optimality gap as a function of the exploration parameter $\beta$, for different numbers of directions $s$ and different input dimensions $d$. In every experiment, the stepsize $\gamma$ is selected via grid search.

\begin{figure}[H]
    \centering
    \includegraphics[width=0.75\linewidth]{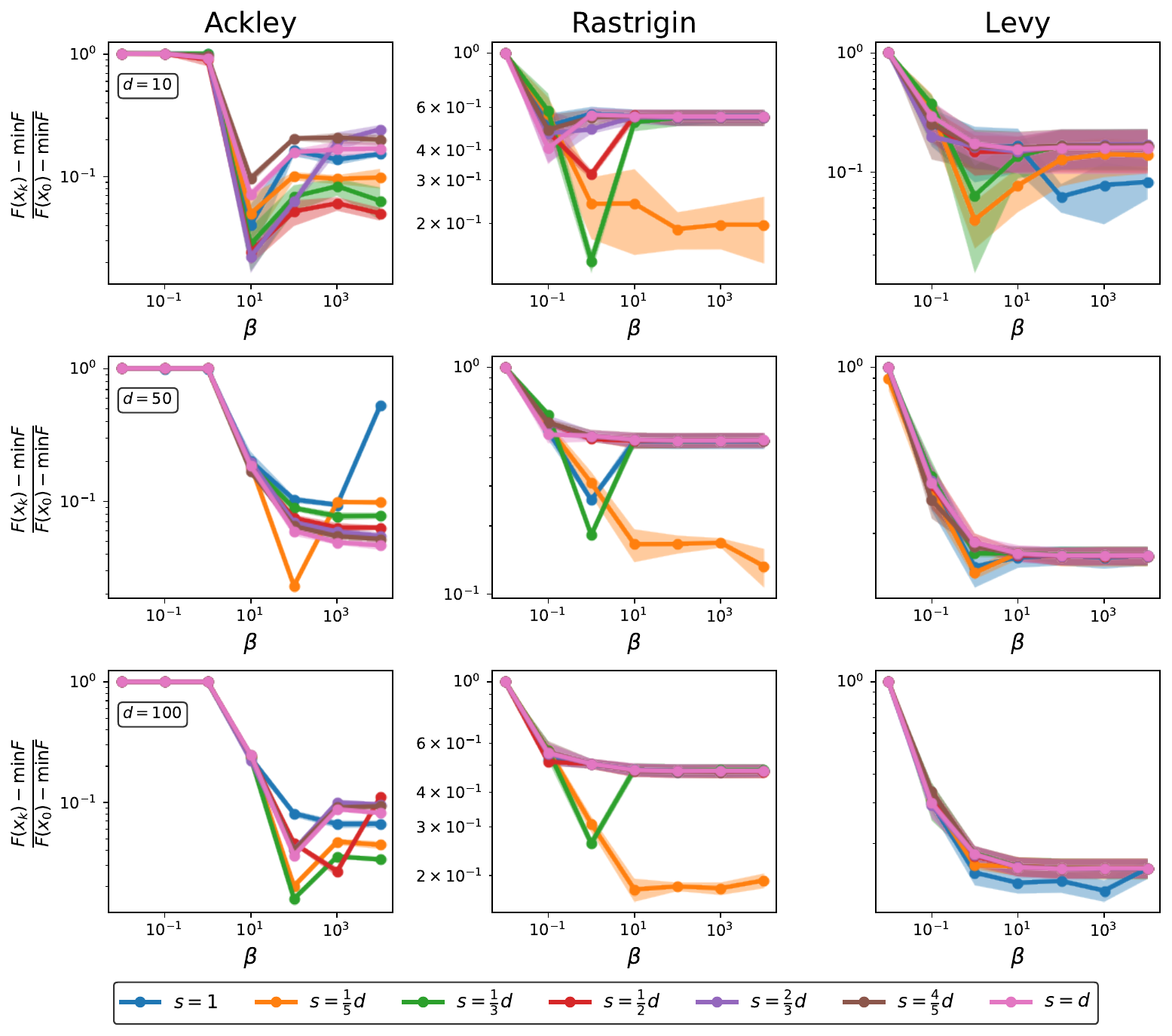}
    \caption{Averaged normalized optimality gap at the best iterate over the last $100$ iterations, as a function of the exploration parameter $\beta$, for varying number of directions $s$. Rows correspond to increasing input dimension $d$ (top to bottom: $d=10, 50, 100$), while columns correspond to the different target functions (left to right: Ackley, Rastrigin, Levy).}
    \label{fig:performance_beta}
\end{figure}

We observe that for very small $\beta$, Algorithm \ref{alg:ZO-ULA} provides bad performance. This is because with such a choice the stochastic component of the dynamics is too strong relative to the drift. As a result, the algorithm either diverges, or, when it converges, the stationary distribution it samples from has a too large variance, yielding a poor approximation of the minimizer. As $\beta$ increases, the target Gibbs measure becomes more peaked around the global minimum, so that sampling from (or near) the stationary distribution yields a more precise approximation of the minimizer, and performance improves. Notice that such a behavior is expected from the theory. However, for excessively large $\beta$, performance may deteriorate. A more peaked Gibbs measure requires more iterations for the algorithm to reach (or approach) its stationary distribution, so that, under a fixed evaluation budget, an insufficient number of iterations are performed to benefit from this increased precision. Notice that this effect is amplified when $s$ is large, since larger $s$ further reduces the number of iterations affordable within the same budget. Notably, performance also deteriorates for $s=1$ when $\beta$ is too large. In this case, in addition to requiring more iterations to approach the stationary distribution, the admissible stepsize $\gamma$ is smaller for $s=1$, as observed in Figure \ref{fig:performance_gamma}, and thus the algorithm progresses even more slowly toward the stationary distribution, further limiting the precision achievable within the fixed evaluation budget.

\section{Conclusions}

We studied Langevin-based algorithms for non-convex optimization under smoothness and dissipativity assumptions, focusing on expected excess-risk bounds. The main idea was to pass directly from relative entropy to objective-value error through a weighted Csisz\'ar--Kullback--Pinsker inequality and exponential-moment estimates, avoiding an intermediate Wasserstein step. This led to explicit bounds for exact-gradient ULA, inexact-gradient ULA, mini-batch ULA, and zeroth-order ULA. Since our inexact-gradient framework is broad, it does not exploit the special cancellations available for unbiased stochastic gradients; developing sharper mini-batch ULA bounds based on unbiasedness, for instance along the lines of~\cite{DasDheerajRaj2023}, is therefore a natural direction for future work. Another natural direction is to relax the smoothness and dissipativity assumptions. Several recent works study Langevin-type algorithms under different or weaker regularity regimes, including tail-growth and weak-smoothness conditions~\cite{ErdogduHosseinzadeh2021,NguyenDangChen2023}, stationarity guarantees under mild non-log-concave assumptions~\cite{BCESZ2022}, or non-smooth sampling regimes~\cite{Johnston2025}, and optimization-oriented extensions with weaker dissipativity or H\"older-type regularity~\cite{Chen2024}. However, extending our argument to such settings is not automatic. The weighted KL-to-objective comparison used here relies strongly on the quadratic-growth structure implied by smoothness and dissipativity: the objective has at most quadratic growth, the gradient has at most linear growth, and the Gibbs measure has the exponential moments needed to control expectations of unbounded functions. Finally, an important direction is to improve the dependence on the functional-inequality constants. A natural possibility is to exploit perturbative or problem-specific bounds, for instance when the objective decomposes into a strongly confining part plus a bounded or controlled perturbation~\cite{Cattiaux2022}. Such refinements would be complementary to the present analysis and could substantially improve the practical relevance of the resulting bounds.

\paragraph*{Acknowledgments.}
\sloppy E.N., S.V. and L.R. acknowledge the support of the US Air Force Office of Scientific Research (FA8655-22-1-7034). The research by E.N. and S.V. has been supported by the MUR Excellence Department Project awarded to Dipartimento di Matematica, Universita di Genova, CUP D33C23001110001. L.R. acknowledges the financial support of the European Commission (Horizon Europe grant ELIAS 101120237). L.R. is affiliated to the Istituto Italiano di Tecnologia (IIT). E.N. and S.V. are members of the Gruppo Nazionale per l’Analisi Matematica, la Probabilità e le loro Applicazioni (GNAMPA) of the Istituto Nazionale di Alta Matematica (INdAM). The research of M.R. has been partially funded by a France 2030 support managed by the Agence Nationale de la Recherche, under the reference ANR-23-PEIA-0004 (PDE-AI project). Experiments presented in this paper were carried out using the Grid'5000 testbed, supported by a scientific interest group hosted by Inria and including CNRS, RENATER and several Universities as well as other organizations (see \url{https://www.grid5000.fr}). This work represents only the view of the authors. The European Commission and the other organizations are not responsible for any use that may be made of the information it contains.

\bibliographystyle{plain}
\bibliography{bibliography}

\appendix

\setcounter{theorem}{0}

\renewcommand{\thelemma}{\thesection.\arabic{lemma}}
\renewcommand{\thedefinition}{\thesection.\arabic{definition}}

\section{Measure inequalities}\label{app:inequalities}

In this section, we recall fundamental functional inequalities satisfied by the target measure $\pi_{\beta}$ under the assumptions of smoothness and dissipativity. These inequalities are central to establishing the convergence of the corresponding Langevin dynamics. First, recall the Kullback--Leibler divergence.

\begin{definition}[Kullback--Leibler divergence]
Let $\mu$ and $\nu$ be probability measures on $\mathbb{R}^d$. The Kullback--Leibler divergence, or relative entropy, of $\mu$ with respect to $\nu$ is defined by
\[
    \KL(\mu\|\nu)
    :=
    \begin{cases}
    \displaystyle
    \int_{\mathbb{R}^d}
    \log\left(\frac{d\mu}{d\nu}\right)\,d\mu,
    & \text{if } \mu\ll\nu, \\[1.2em]
    +\infty,
    & \text{otherwise}.
    \end{cases}
\]
Equivalently, if $r=\frac{d\mu}{d\nu}$, then
\[
    \KL(\mu\|\nu)
    =
    \int_{\mathbb{R}^d} r\log r\,d\nu,
\]
with the convention $0\log 0=0$.
\end{definition}

The main inequality we consider and exploit is the Logarithmic Sobolev Inequality. 

\begin{definition}[Logarithmic Sobolev Inequality]
    A probability measure $\pi$ is said to satisfy a Logarithmic Sobolev Inequality (LSI) with constant $C_{\text{LS}} > 0$ if, for all probability measures $\mu$ absolutely continuous with respect to $\pi$ ($\mu \ll \pi$), it holds that
    \begin{equation}\label{eq:lsi}
        \KL(\mu||\pi) \leq 2 C_{\text{LS}} \int_{\mathbb{R}^d} \left\|\nabla \sqrt{\frac{d\mu}{d\pi}}\right\|^2 \, d\pi = \frac{C_{\text{LS}}}{2}\int_{\mathbb{R}^d} \left\|\nabla \log \left(\frac{d\mu}{d\pi}\right)\right\|^2\,d\mu.
    \end{equation}
\end{definition}

The literature frequently presents the Fisher information (the right-hand side of the LSI) in two different forms. The following standard computation reconciles them. Let $f = \frac{d\mu}{d\pi}$ denote the Radon-Nikodym derivative. Applying the chain rule $\nabla \sqrt{f} = \frac{\nabla f}{2\sqrt{f}}$, the first formulation expands as:
\begin{align*}
    2C_{\text{LS}} \int_{\mathbb{R}^d} \left\|\nabla \sqrt{f}\right\|^2 \, d\pi_\beta
    &= 2C_{\text{LS}} \int_{\mathbb{R}^d} \left\| \frac{\nabla f}{2\sqrt{f}} \right\|^2 \, d\pi_\beta= \frac{C_{\text{LS}}}{2} \int_{\mathbb{R}^d} \frac{\|\nabla f\|^2}{f} \, d\pi.
\end{align*}

For the second formulation, using the identity $\nabla \log f = \frac{\nabla f}{f}$ and changing the measure via $d\mu = f \, d\pi_{\beta}$, we obtain:
\begin{align*}
    \frac{C_{\text{LS}}}{2}\int_{\mathbb{R}^d} \left\|\nabla \log f \right\|^2\,d\mu 
    &= \frac{C_{\text{LS}}}{2}\int_{\mathbb{R}^d} \left\| \frac{\nabla f}{f} \right\|^2 \, f \, d\pi_\beta= \frac{C_{\text{LS}}}{2}\int_{\mathbb{R}^d} \frac{\|\nabla f\|^2}{f} \, d\pi.
\end{align*}
This confirms the equivalence of the two expressions.

The Kullback-Leibler divergence also strictly controls other distance metrics between measures, such as the Wasserstein distance and the total variation distance, via the following inequalities.

\begin{definition}[Talagrand's Transportation Inequality]
    A probability measure $\pi$ is said to satisfy Talagrand's $T_2$ transportation inequality with constant $C_T > 0$ if, for all probability measures $\mu$ absolutely continuous with respect to $\pi$, it holds that
    \begin{equation}\label{eq:talagrand}
        W_2^2(\mu, \pi) \leq 2 C_T \KL(\mu||\pi),
    \end{equation}
    where $W_2(\mu, \pi)$ denotes the $L^2$-Wasserstein distance between $\mu$ and $\pi$ \cite{Villani2009}.
\end{definition}

\begin{definition}[Pinsker's Inequality]
    For any two probability measures $\mu$ and $\pi$, Pinsker's inequality bounds the total variation distance unconditionally in terms of the Kullback-Leibler divergence:
    \begin{equation}\label{eq:pinsker}
        \|\mu - \pi\|_{\text{TV}} \leq \sqrt{\frac{1}{2} \KL(\mu||\pi)},
    \end{equation}
    where $\|\cdot\|_{\text{TV}}$ denotes the total variation distance. 
\end{definition}

These functional inequalities form a well-established hierarchy. By the Otto-Villani theorem, if $\pi_{\beta}$ satisfies the Logarithmic Sobolev Inequality with constant $C_{\text{LS}}$, it automatically satisfies Talagrand's $T_2$ inequality with $C_T \leq C_{\text{LS}}$. Talagrand's $T_2$ inequality, in turn, implies the Poincaré inequality.

\begin{definition}[Poincaré Inequality]
    A probability measure $\pi$ is said to satisfy a Poincaré inequality with constant $C_P > 0$ if, for all functions $g \in C_c^\infty(\mathbb{R}^d)$, it holds that
    \begin{equation}\label{eq:poincare}
        \operatorname{Var}_{\pi}(g) \leq C_P \int_{\mathbb{R}^d} \|\nabla g\|^2 \, d\pi,
    \end{equation}
    where $\operatorname{Var}_{\pi}(g) = \int_{\mathbb{R}^d} g^2 d\pi- \left(\int_{\mathbb{R}^d} g d\pi\right)^2$. 
\end{definition}

\begin{remark}[Connection to the Spectral Gap]\label{rem:spectral_gap}
    In the context of the Langevin diffusion $dX_t = -\nabla F(X_t)dt + \sqrt{2}dW_t$, the Poincaré constant $C_P$ is intimately tied to the spectral gap of the associated infinitesimal generator $\mathcal{L} = \Delta - \langle \nabla F, \nabla \rangle$. Specifically, if $\lambda^*$ denotes the global spectral gap of the operator $-\mathcal{L}$ in $L^2(\pi)$ (as defined, for instance, in Raginsky et al.\ \cite{Raginsky2017}), the Poincaré constant is exactly its inverse: $C_P = \frac{1}{\lambda^*}$. Thus, bounding $C_P$ is equivalent to establishing an exponential rate of convergence to equilibrium in $L^2(\pi)$ variance.
\end{remark}

\subsection{Bound on the Logarithmic Sobolev Constant}\label{sec:CLS_bounds} 

To bound the Log-Sobolev constant, we rely on the Poincaré inequality defined in Appendix \ref{app:inequalities}. As established by Raginsky et al.\ \cite[Appendix E]{Raginsky2017}, $\CLS(\beta, d)$ can be explicitly bounded in terms of the problem parameters and the Poincaré constant $C_P(\beta, d)$ of the measure $\pi_{\beta}$. For a $C^2$, $M$-smooth and $(m,b)$-dissipative potential $F$, the Gibbs measure satisfies a Log-Sobolev inequality with the constant
\begin{equation}\label{eq:CLS_bound}
    \CLS (\beta, d) \leq \frac{2m^2+8M^2}{\beta m^2 M} + C_P(\beta, d)\left(\frac{6M(d+\beta b)}{m}+2\right)=:C_{\mathrm{R}}(\beta,d).
    \end{equation}
Without further geometric assumptions, such as global strong convexity, the magnitude of $\CLS(\beta, d)$ is dominated by the Poincaré constant $C_P(\beta,d)$ (which directly corresponds to the inverse of the spectral gap, $\lambda_*^{-1}$, in the notation of \cite{Raginsky2017}). Following the computations detailed in \cite[Appendix B]{Raginsky2017} and \cite[Proposition 13]{Bakry2014}, we obtain the explicit estimate
\begin{equation*}
    C_P(\beta, d) \leq \frac{1}{m\beta(d+b\beta)}+\frac{2(d+b\beta)C_{\mathrm{U}}}{m\beta}\exp\left(\frac{2}{m}\left(M+B\right)(b\beta+d)+\beta(A+B)\right),
\end{equation*}
where $A$ and $B$ are the constants given by Lemma \ref{lem:quad_growth} applied to $F$, and $C_{\mathrm{U}}$ is a universal constant. 

Combining these bounds reveals that the dependence of $\CLS(\beta, d)$ on $\beta$ and $d$ is exponential. Asymptotically, as $\beta, d \to \infty$, we derive the following general scaling behavior:
\[\CLS (\beta, d)\leq  \tilde{\mathcal{O}}\left(\exp\left(\left[\frac{2b}{m}(M+B)+A+B\right]\beta + \frac{2}{m}(M+B) d\right)\right).\]

As highlighted in the main text, while Raginsky et al.\ \cite{Raginsky2017} state these bounds for $M$-smooth potentials, their underlying reliance on the Bakry-Cattiaux criteria implicitly assumes that $F$ is $C^2$ (see \cite[Proposition 15]{Raginsky2017} or \cite[Lemma B.6]{habring2026inhomogeneous} where this gap is still present). Because our framework avoids this stronger regularity assumption, we cannot invoke their LSI result directly. To bridge this gap and ensure the constants hold for functions that are merely $C^{1,1}$, we provide a proof of the Logarithmic Sobolev Inequality via a mollification argument.

\begin{proof}[Proof of Proposition \ref{prop:LSI}]
We argue by mollification. Let $\rho\in C_c^\infty(\mathbb R^d)$ be nonnegative,
even, supported in the unit ball, and normalized by $\int_{\mathbb R^d}\rho(y)\,dy=1$. Since $\rho$ is even, we also have $\int_{\mathbb R^d} y\,\rho(y)\,dy=0$. For $\epsilon>0$, set
\[
    \rho_\epsilon(y):=\epsilon^{-d}\rho(y/\epsilon),
    \qquad
    F_\epsilon(x):=(\rho_\epsilon*F)(x)
    =
    \int_{\mathbb R^d}\rho_\epsilon(y)F(x-y)\,dy.
\]
Finally define $s_\rho^2:=\int_{\mathbb R^d}\|y\|^2\rho(y)\,dy$, which implies $\int_{\mathbb R^d}\|y\|^2\rho_\epsilon(y)\,dy
    =
    \epsilon^2s_\rho^2$. Since $F$ is $C^1$ and $\rho_\epsilon$ is $C^{\infty}_c$,
$F_\epsilon\in C^\infty(\mathbb R^d)$, and it holds
\[
    \nabla F_\epsilon(x)
    =
    \int_{\mathbb R^d}\rho_\epsilon(y)\nabla F(x-y)\,dy .
\]
Indeed, this follows by differentiating under the integral sign, which is justified
because $\nabla F$ is continuous and the integration is over the compact set
$x-\operatorname{supp}\rho_\epsilon$. For every $x,z\in\mathbb R^d$, we have
\[
\begin{aligned}
    \|\nabla F_\epsilon(x)-\nabla F_\epsilon(z)\|
    &=
    \left\|
        \int_{\mathbb R^d}\rho_\epsilon(y)
        \bigl(\nabla F(x-y)-\nabla F(z-y)\bigr)\,dy
    \right\|\le
    \int_{\mathbb R^d}\rho_\epsilon(y)
    \|\nabla F(x-y)-\nabla F(z-y)\|\,dy          \\
    &\le
    M\|x-z\|\int_{\mathbb R^d}\rho_\epsilon(y)\,dy
    =
    M\|x-z\|.
\end{aligned}
\]
Thus $F_\epsilon$ is $M$-smooth. Since $F_\epsilon$ is $C^{\infty}$, this implies $-MI\preceq \nabla^2F_\epsilon(x)\preceq MI$, for all $x\in\mathbb R^d$. In particular, the potential $\beta F_\epsilon$ satisfies $\nabla^2(\beta F_\epsilon)(x)\succeq -\beta M I$. We next prove dissipativity. For every $x\in\mathbb R^d$,
\[
\begin{aligned}
    \langle x,\nabla F_\epsilon(x)\rangle
    &=
    \int_{\mathbb R^d}\rho_\epsilon(y)
    \langle x,\nabla F(x-y)\rangle\,dy=
    \int_{\mathbb R^d}\rho_\epsilon(y)
    \langle x-y,\nabla F(x-y)\rangle\,dy + \int_{\mathbb R^d}\rho_\epsilon(y)
    \langle y,\nabla F(x-y)\rangle\,dy .
\end{aligned}
\]
The first integral is bounded from below by dissipativity of $F$:
\[
\begin{aligned}
    \int_{\mathbb R^d}\rho_\epsilon(y)
    \langle x-y,\nabla F(x-y)\rangle\,dy
    &\ge
    \int_{\mathbb R^d}\rho_\epsilon(y)
    \bigl(m\|x-y\|^2-b\bigr)\,dy=m\int_{\mathbb R^d}\rho_\epsilon(y)\|x-y\|^2\,dy-b.
\end{aligned}
\]
Since $\int y\rho_\epsilon(y)\,dy=0$,
\[
    \int_{\mathbb R^d}\rho_\epsilon(y)\|x-y\|^2\,dy
    =
    \|x\|^2+\int_{\mathbb R^d}\rho_\epsilon(y)\|y\|^2\,dy
    =
    \|x\|^2+\epsilon^2s_\rho^2 .
\]
Therefore
\[
    \int_{\mathbb R^d}\rho_\epsilon(y)
    \langle x-y,\nabla F(x-y)\rangle\,dy
    \ge
    m\|x\|^2
    +
    m\epsilon^2s_\rho^2
    -
    b.
\]
For the second integral, using again $\int y\rho_\epsilon(y)\,dy=0$, we write
\[
\begin{aligned}
    \int_{\mathbb R^d}\rho_\epsilon(y)
    \langle y,\nabla F(x-y)\rangle\,dy
    &=
    \int_{\mathbb R^d}\rho_\epsilon(y)
    \langle y,\nabla F(x-y)-\nabla F(x)\rangle\,dy .
\end{aligned}
\]
By the $M$-Lipschitz continuity of $\nabla F$,
\[
\begin{aligned}
    \int_{\mathbb R^d}\rho_\epsilon(y)
    \langle y,\nabla F(x-y)-\nabla F(x)\rangle\,dy
    &\ge
    -
    \int_{\mathbb R^d}\rho_\epsilon(y)
    \|y\|\,\|\nabla F(x-y)-\nabla F(x)\|\,dy       \\
    &\ge
    -
    M\int_{\mathbb R^d}\rho_\epsilon(y)\|y\|^2\,dy =
    -M\epsilon^2s_\rho^2 .
\end{aligned}
\]
Combining the two estimates gives
\[
    \langle x,\nabla F_\epsilon(x)\rangle
    \ge
    m\|x\|^2-b-(M-m)\epsilon^2s_\rho^2 .
\]
Thus $F_\epsilon$ is $(m_\epsilon,b_\epsilon)$-dissipative with $m_\epsilon=m$ and $b_\epsilon=b+(M-m)\epsilon^2s_\rho^2$. In particular we have $M_\epsilon=M$, $m_\epsilon = m$ and $b_\epsilon\to b$. We now compare $F_\epsilon$ and $F$ uniformly. By $M$-smoothness, for all
$x,y\in\mathbb R^d$,
\[
    \left|
        F(x-y)-F(x)+\langle \nabla F(x),y\rangle
    \right|
    \le
    \frac M2\|y\|^2 .
\]
Therefore, using $\int y\rho_\epsilon(y)\,dy=0$,
\[
\begin{aligned}
    |F_\epsilon(x)-F(x)|
    &=
    \left|
        \int_{\mathbb R^d}\rho_\epsilon(y)
        \bigl(F(x-y)-F(x)\bigr)\,dy
    \right| = \left|
        \int_{\mathbb R^d}\rho_\epsilon(y)
        \bigl(F(x-y)-F(x)+\langle\nabla F(x),y\rangle\bigr)\,dy
    \right|                                                   \\
    &\le
    \frac M2
    \int_{\mathbb R^d}\rho_\epsilon(y)\|y\|^2\,dy
    =
    \frac M2\,\epsilon^2s_\rho^2 .
\end{aligned}
\]
Set $D_\epsilon:=\frac M2\,\epsilon^2s_\rho^2$. Then $\|F_\epsilon-F\|_{L^\infty(\mathbb R^d)}
    \le D_\epsilon$ with $D_\epsilon\to0$. Let
\[
    Z_\epsilon
    :=
    \int_{\mathbb R^d}e^{-\beta F_\epsilon(x)}\,dx,
    \qquad
    d\pi_\epsilon(x)
    :=
    Z_\epsilon^{-1}e^{-\beta F_\epsilon(x)}\,dx.
\]
Since $|F_\epsilon-F|\le D_\epsilon$, we have
\[
    e^{-\beta D_\epsilon}e^{-\beta F(x)}
    \le
    e^{-\beta F_\epsilon(x)}
    \le
    e^{\beta D_\epsilon}e^{-\beta F(x)} .
\]
Hence $e^{-\beta D_\epsilon}Z
    \le
    Z_\epsilon
    \le
    e^{\beta D_\epsilon}Z$. In particular, $Z_\epsilon<\infty$. Moreover, the density ratio satisfies
\[
\begin{aligned}
    \frac{d\pi_\epsilon}{d\pi_{\beta}}(x)
    &=
    \frac{Z}{Z_\epsilon}
    e^{-\beta(F_\epsilon(x)-F(x))}.
\end{aligned}
\]
Using the previous bounds on $Z_\epsilon/Z$ and on $F_\epsilon-F$, we obtain
\[
    e^{-2\beta D_\epsilon}
    \le
    \frac{d\pi_\epsilon}{d\pi_{\beta}}(x)
    \le
    e^{2\beta D_\epsilon},
    \qquad x\in\mathbb R^d.
\]
Since
\[\pi_\epsilon
=
\frac{e^{-\beta(F_\epsilon-F)}}{\int e^{-\beta(F_\epsilon-F)(y)}\, d\pi_{\beta}(y)}\,\pi,
\]
the measure $\pi_\epsilon$ is a bounded perturbation of $\pi_{\beta}$ with perturbation
potential $G_\epsilon=\beta(F_\epsilon-F)$. As
\[
\operatorname{Osc}(G_\epsilon)
\le
2\beta\|F_\epsilon-F\|_\infty
\le
2\beta D_\epsilon,
\]
the Holley--Stroock perturbation criterion \cite{holley1987} (see also \cite[Theorem 1.1]{Cattiaux2022}) yields
\[
C_P(\pi_\epsilon)
\le
e^{\operatorname{Osc}(G_\epsilon)}C_P(\pi_{\beta})
\le
e^{2\beta D_\epsilon}C_P(\pi_{\beta}).
\]
In particular, $\limsup_{\epsilon\downarrow0} C_P(\pi_\epsilon)
\le C_P(\pi_{\beta})=:C_P(\beta,d)$. We may now apply the result in Raginsky et al. \cite[Appendix A]{Raginsky2017} to the $C^{\infty}$ potential $F_\epsilon$. The function $F_\epsilon$ is $C^\infty$,
$M_\epsilon$-smooth with $M_\epsilon=M$, and
$(m_\epsilon,b_\epsilon)$-dissipative with $m_\epsilon=m$ and $b_\epsilon=b+(M-m)\epsilon^2s_\rho^2$. Moreover, $\pi_\epsilon$ satisfies a Poincaré inequality with constant at most
$e^{2\beta D_\epsilon}C_P(\beta,d)$. Hence $\pi_\epsilon$ satisfies a logarithmic
Sobolev inequality with constant that can be bounded by
\[
    C_\epsilon
    =
    \frac{2m_\epsilon^2+8M_\epsilon^2}
         {m_\epsilon^2M_\epsilon\beta}
    +
    \left(
        \frac{6M_\epsilon}{m_\epsilon}
        (d+\beta b_\epsilon)
        +2
    \right)
    e^{2\beta D_\epsilon}C_P(\beta,d) .
\]
Since $m_\epsilon=m$, $M_\epsilon=M$,
$b_\epsilon\to b$, and $D_\epsilon\to0$, $C_\epsilon$ converges to the constant $C_{\mathrm{R}}(\beta,d)$ defined in \eqref{eq:CLS_bound}.

It remains to pass to the limit in the logarithmic Sobolev inequality. We do this by using the density ratio between $\pi_\epsilon$ and $\pi_{\beta}$. Set
\[
    q_\epsilon(x):=\frac{d\pi_\epsilon}{d\pi_{\beta}}(x)
    =
    \frac{Z}{Z_\epsilon}
    e^{-\beta(F_\epsilon(x)-F(x))}.
\]
Since $\|F_\epsilon-F\|_\infty\leq D_\epsilon$ and $D_\epsilon\to0$, we have $\|q_\epsilon-1\|_{L^\infty(\pi_{\beta})}\to0$ and $\|\log q_\epsilon\|_{L^\infty(\pi_{\beta})}\to0$. Moreover, $\nabla\log q_\epsilon
    =
    -\beta\bigl(\nabla F_\epsilon-\nabla F\bigr)$. Since $\nabla F$ is $M$-Lipschitz,
\[
\begin{aligned}
    \|\nabla F_\epsilon(x)-\nabla F(x)\|
    &=
    \left\|
        \int_{\mathbb R^d}\rho_\epsilon(y)
        \bigl(\nabla F(x-y)-\nabla F(x)\bigr)\,dy
    \right\|        \\
    &\leq
    M\int_{\mathbb R^d}\|y\|\rho_\epsilon(y)\,dy
    =
    M\epsilon
    \int_{\mathbb R^d}\|y\|\rho(y)\,dy .
\end{aligned}
\]
Therefore, $\|\nabla\log q_\epsilon\|_{L^\infty(\pi_{\beta})}\to0$. If $I(\mu\|\pi_{\beta})=+\infty$, there is nothing to prove, so let $\mu\ll\pi_{\beta}$ be such that
\[
    I(\mu\|\pi_{\beta})
    :=
    \int_{\mathbb R^d}
    \left\|
        \nabla\log\left(\frac{d\mu}{d\pi_{\beta}}\right)
    \right\|^2\,d\mu
    <+\infty .
\]
Since $q_\epsilon$ is bounded above and below by positive constants, we also have $\mu\ll\pi_\epsilon$. Writing $r:=\frac{d\mu}{d\pi_{\beta}}$,
we get $\frac{d\mu}{d\pi_\epsilon}
    =
    \frac{r}{q_\epsilon}$. Applying the logarithmic Sobolev inequality for $\pi_\epsilon$ gives
\[
    \KL(\mu\|\pi_\epsilon)
    \leq
    \frac{C_\epsilon}{2}
    \int_{\mathbb R^d}
    \left\|
        \nabla\log\left(\frac{r}{q_\epsilon}\right)
    \right\|^2\,d\mu .
\]
Now
\[
\begin{aligned}
    \KL(\mu\|\pi_\epsilon) 
    =
    \int_{\mathbb R^d}
    \log\left(\frac{r}{q_\epsilon}\right)\,d\mu = \int_{\mathbb R^d}\log r\,d\mu - \int_{\mathbb R^d}\log q_\epsilon\,d\mu =
    \KL(\mu\|\pi_{\beta})
    -
    \int_{\mathbb R^d}\log q_\epsilon\,d\mu .
\end{aligned}
\]
Since $\|\log q_\epsilon\|_{L^\infty(\pi_{\beta})}\to0$, we have $\KL(\mu\|\pi_\epsilon)\to \KL(\mu\|\pi_{\beta})$. Similarly, $\nabla\log\left(\frac{r}{q_\epsilon}\right)
    =
    \nabla\log r-\nabla\log q_\epsilon$. Using $\|\nabla\log q_\epsilon\|_{L^\infty(\pi_{\beta})}\to0$ and $I(\mu\|\pi_{\beta})<+\infty$, we obtain
\[
    \int_{\mathbb R^d}
    \left\|
        \nabla\log\left(\frac{r}{q_\epsilon}\right)
    \right\|^2\,d\mu
    \longrightarrow
    \int_{\mathbb R^d}
    \|\nabla\log r\|^2\,d\mu
    =
    I(\mu\|\pi_{\beta}).
\]
Finally, since $C_\epsilon\to C_{\rm R}(\beta,d)$ defined in \eqref{eq:CLS_bound}, passing to the limit yields $\KL(\mu\|\pi_{\beta})
    \leq
    \frac{C_{\rm R}(\beta,d)}{2}
    I(\mu\|\pi_{\beta})$, that is,
\[
    \KL(\mu\|\pi_{\beta})
    \leq
    \frac{C_{\rm R}(\beta,d)}{2}
    \int_{\mathbb R^d}
    \left\|
        \nabla\log\left(\frac{d\mu}{d\pi_{\beta}}\right)
    \right\|^2\,d\mu .
\]
If $I(\mu\|\pi_{\beta})=+\infty$, the inequality is trivial. This proves the logarithmic Sobolev inequality for $\pi_{\beta}$ with constant $C_{\rm R}(\beta,d)$, and hence Proposition~\ref{prop:LSI}.
\end{proof}

\section{Proofs for Section \ref{sec:Langevin}}\label{app:proofs_Lan}

\subsection{Proofs of preliminary results}\label{app:proof_preliminaries}

\begin{proof}[Proof of Lemma \ref{lem:quad_growth}]\label{proof:quad_growth}
    (i) Let $x$ with $\|x\|=1$. Then, for every $t>0$, 
    \[\frac{d}{dt}f(tx) = \langle x,\nabla f(tx) \rangle\geq \frac{mt^2-b}{t}.\]
    Therefore, for any $t\geq 1$ if holds
    \[f(tx)-f(x)= \int_1^t \frac{d}{ds}f(sx)\, ds \geq \frac{m}{2} (t^2-1) - b\ln(t).\]
    Since $f$ is continuous it attains a minimum on the sphere $\{x\in \R^d\mid\|x\|=1\}$, so that
    \[f(tx)\geq \min_{\|x\|=1}f(x) + \frac{m}{2} (t^2-1) - b\ln(t).\]
    This implies that $f$ is coercive and therefore attains its minimum in $\R^d$.\\
    (ii) For any $x\in \R^d$ we have $\|\nabla f(x)\|\leq \|\nabla f(x)-\nabla f(0)\|+\|\nabla f(0)\|\leq L \|x\| + \|\nabla f(0)\| $. 
    \begin{equation*}
        \|\nabla f(x)\|\leq \|\nabla f(x)-\nabla f(0)\|+\|\nabla f(0)\|\leq L \|x\| + \underbrace{\|\nabla f(0)\|}_{=:B}.
    \end{equation*}
    (iii) The proof follows the same line of \cite[Lemma 2]{Raginsky2017}.
\end{proof}

\begin{proof}[Proof of Corollary \ref{cor:abs-quadratic-bound}]
By Lemma~\ref{lem:quad_growth} (iii), we have $f(x)\le \frac{L}{2}\|x\|^2 + B\|x\| + A$, for every $x\in\R^d$.
By Young's inequality we have  $B\|x\|\le \frac12\|x\|^2+\frac{B^2}{2}$. Hence,
\begin{equation*}
f(x)\le \frac{L+1}{2}\|x\|^2 + A+\frac{B^2}{2}.    
\end{equation*}
On the other hand, let $C  := \min\limits_{x \in \mathbb{R}^d} f(x)$. Therefore, for every $x \in \mathbb{R}^d$ we have $f(x) \geq C$ and so $-f(x) \leq -C \leq|C|$. 
Therefore,
\begin{equation*}
|f(x)|=\max\{f(x),-f(x)\}
\le \max\Big\{\frac{L+1}{2}\|x\|^2 + A+\frac{B^2}{2},\,|C|\Big\}
\le \frac{L+1}{2}\|x\|^2 + A+\frac{B^2}{2} + |C|,    
\end{equation*}
which concludes the proof.
\end{proof}

\subsection{Proof of Lemma~\ref{lem:exp_bound_pi}}\label{app:proof_exp_bound_pi}

\begin{proof}[Proof of Lemma~\ref{lem:exp_bound_pi}]
For any smooth, sufficiently integrable test function $g: \mathbb{R}^d \to \mathbb{R}$, integration by parts against the Gibbs measure $\pi_{\beta}$ yields the standard identity
\begin{equation}\label{eq:integration_by_parts_pi}
    \int_{\mathbb{R}^d} \Delta g(y) \, d\pi_{\beta}(y) = \frac{1}{Z}\int \Delta g \, e^{-\beta F} \, dy =\frac{\beta}{Z}\int \langle \nabla F(y), \nabla g(y) \rangle \, e^{-\beta F} \, dy= \beta \int_{\mathbb{R}^d} \langle \nabla F(y), \nabla g(y) \rangle \, d\pi_{\beta}(y).
\end{equation}
We apply this identity to the function $g(y) = e^{\alpha \|y\|^2}$ for a parameter $\alpha \in [0, 1]$. This is justified by Lemma~\ref{lem:quad_growth}(iii), which implies that there exists $c\in\R$ such that
$F(y)\ge \frac{m}{3}\|y\|^2-c$ for all $y$, hence
$e^{-\beta F(y)}\le e^{\beta c} e^{-(\beta m/3)\|y\|^2}$.
Therefore, for every $\alpha\in[0,1]$ and $\beta\ge 4/m$, we have
\[\int_{\R^d} e^{\alpha\|y\|^2}\,d\pi_{\beta}(y) \leq \frac{e^{\beta c}}{Z}\int e^{(\alpha-\beta m /3)\|y\|^2} \, dy\leq \frac{e^{\beta c}}{Z}\int e^{-\|y\|^2/3} \, dy<\infty,\]
and
\[\int_{\R^d}\|y\|^2 e^{\alpha\|y\|^2}\,d\pi_{\beta}(y) \leq \frac{e^{\beta c}}{Z}\int \|y\|^2 e^{(\alpha-\beta m /3)\|y\|^2} \, dy\leq \frac{e^{\beta c}}{Z}\int \|y\|^2 e^{-\|y\|^2/3} \, dy<\infty.\]
Consequently, since the gradient and Laplacian of $g$ are given respectively by
\begin{equation}\label{eq:Grad_Lap_of_g}
    \nabla g(y) = 2\alpha y e^{\alpha \|y\|^2}, \quad \Delta g(y) = 2\alpha(d + 2\alpha \|y\|^2) e^{\alpha \|y\|^2},
\end{equation}
and $\|\nabla F(y)\|\leq M\|y\|+B$ by Lemma~\ref{lem:quad_growth}(ii), we may apply \eqref{eq:integration_by_parts_pi} to $g_R(y)=\chi_R(y)e^{\alpha\|y\|^2}$ with $\chi_R\in C_c^\infty$ a standard cutoff,
and let $R\to\infty$ to obtain \eqref{eq:integration_by_parts_pi} for $g(y)=e^{\alpha\|y\|^2}$. To rigorously justify this, let $\chi \in C_c^\infty(\mathbb{R}^d)$ be a base cutoff function such that $\chi \equiv 1$ on $B_1$ and $\chi \equiv 0$ on $\mathbb{R}^d \setminus B_2$. We define the scaled cutoff $\chi_R(y) = \chi(y/R)$, which ensures $\chi_R \equiv 1$ on $B_R$, $\chi_R \equiv 0$ outside $B_{2R}$, and by the chain rule, $\|\nabla\chi_R\|_\infty \le C_1 R^{-1}$ and $\|\Delta\chi_R\|_\infty \le C_2 R^{-2}$. Set the compactly supported test function $g_R = \chi_R g$. Applying \eqref{eq:integration_by_parts_pi} to $g_R$ yields
\begin{equation}\label{eq:ibp_cutoff}
    \int_{\mathbb{R}^d} \left( \chi_R \Delta g + 2\langle \nabla \chi_R, \nabla g \rangle + g \Delta \chi_R \right) d\pi_\beta= \beta \int_{\mathbb{R}^d} \left( \chi_R \langle \nabla F, \nabla g \rangle + g \langle \nabla F, \nabla \chi_R \rangle \right) d\pi.
\end{equation}
Because $\chi_R$ is constant everywhere except on $\mathcal{A}_R = \{y \in \mathbb{R}^d : R \le \|y\| \le 2R\}$, the derivatives $\nabla \chi_R$ and $\Delta \chi_R$ are supported entirely on $\mathcal{A}_R$. Using the gradient bound $\|\nabla F(y)\| \le M\|y\| + B$, we can bound the absolute value of the error integrands on $\mathcal{A}_R$:
\begin{align*}
    |g \Delta \chi_R| &\lesssim R^{-2} e^{\alpha\|y\|^2} \le e^{\alpha\|y\|^2}, \\
    |\langle \nabla \chi_R, \nabla g \rangle| &\lesssim R^{-1} (2\alpha\|y\|) e^{\alpha\|y\|^2} \lesssim e^{\alpha\|y\|^2}, \\
    |g \langle \nabla F, \nabla \chi_R \rangle| &\lesssim R^{-1} (M\|y\|+B) e^{\alpha\|y\|^2} \lesssim e^{\alpha\|y\|^2},
\end{align*}
where we used the fact that $\|y\| \le 2R$ on the support $\mathcal{A}_R$ to cancel the respective $R^{-1}$ dependencies. Consequently, all error terms are uniformly dominated by $C e^{\alpha\|y\|^2}$. Since $\int e^{\alpha\|y\|^2} d\pi_\beta< \infty$ and the measure of $\mathcal{A}_R$ vanishes as $R \to \infty$, the dominated convergence theorem guarantees that the integrals of these error terms converge to $0$. For the main terms $\chi_R \Delta g$ and $\chi_R \langle \nabla F, \nabla g \rangle$, we observe they converge pointwise to $\Delta g$ and $\langle \nabla F, \nabla g \rangle$. To apply the dominated convergence theorem, we bound them uniformly in $R$ using \eqref{eq:Grad_Lap_of_g}
\begin{align*}
    |\chi_R \Delta g| &\le 2\alpha(d + 2\alpha \|y\|^2) e^{\alpha \|y\|^2}, \\
    |\chi_R \langle \nabla F, \nabla g \rangle| &\le \|\nabla F\| \|\nabla g\| \le (M\|y\|+B)(2\alpha \|y\|) e^{\alpha \|y\|^2} = 2\alpha(M\|y\|^2 + B\|y\|) e^{\alpha \|y\|^2}.
\end{align*}
Using the elementary inequality $\|y\| \le \frac{1}{2}(1+\|y\|^2)$, both integrands are globally dominated by $C(1 + \|y\|^2) e^{\alpha \|y\|^2}$ for some constant $C > 0$. Because we have already established that both $\int e^{\alpha\|y\|^2} d\pi_\beta< \infty$ and $\int \|y\|^2 e^{\alpha\|y\|^2} d\pi_\beta< \infty$, this dominating function is $\pi_{\beta}$-integrable. Thus, the dominated convergence theorem ensures that the integrals of the main terms converge to their un-truncated counterparts, fully recovering \eqref{eq:integration_by_parts_pi} for $g(y) = e^{\alpha\|y\|^2}$. Substituting the expressions in \eqref{eq:Grad_Lap_of_g} into \eqref{eq:integration_by_parts_pi} and dividing by $2\alpha > 0$, we obtain
\[
    \int_{\mathbb{R}^d} (d + 2\alpha \|y\|^2) e^{\alpha \|y\|^2} \, d\pi_{\beta}(y) = \beta \int_{\mathbb{R}^d} \langle \nabla F(y), y \rangle e^{\alpha \|y\|^2} \, d\pi_{\beta}(y).
\]
Applying the dissipativity condition $\langle \nabla F(y), y \rangle \ge m\|y\|^2 - b$ to the right-hand side yields
\[
    \int_{\mathbb{R}^d} (d + 2\alpha \|y\|^2) e^{\alpha \|y\|^2} \, d\pi_{\beta}(y) \ge \beta \int_{\mathbb{R}^d} (m\|y\|^2 - b) e^{\alpha \|y\|^2} \, d\pi_{\beta}(y).
\]
Rearranging the terms to group the integrals with respect to $\|y\|^2 e^{\alpha \|y\|^2}$ and $e^{\alpha \|y\|^2}$, we get the inequality
\begin{equation}\label{eq:diff_ineq_alpha}
    (\beta m - 2\alpha) \int_{\mathbb{R}^d} \|y\|^2 e^{\alpha \|y\|^2} \, d\pi_{\beta}(y) \le (d + \beta b) \int_{\mathbb{R}^d} e^{\alpha \|y\|^2} \, d\pi_{\beta}(y).
\end{equation}
Let $H(\alpha) = \int_{\mathbb{R}^d} e^{\alpha \|y\|^2} \, d\pi_{\beta}(y)$. We recognize the integral on the left side as the derivative $H'(\alpha)$. Since $\beta \ge 4/m$ and $\alpha \le 1$, we have $\beta m - 2\alpha \ge 4 - 2 = 2 > 0$. We can thus safely divide both sides by $(\beta m - 2\alpha)$ to form a differential inequality
\[
    \frac{H'(\alpha)}{H(\alpha)} \le \frac{d + \beta b}{\beta m - 2\alpha}.
\]
Integrating this inequality with respect to $\alpha$ from $0$ to $1$ gives
\[
    \log H(1) - \log H(0) \le \int_0^1 \frac{d + \beta b}{\beta m - 2\alpha} \, d\alpha.
\]
By definition, $H(0) = \int_{\mathbb{R}^d} 1 \, d\pi_{\beta}(y) = 1$, so $\log H(0) = 0$. Evaluating the integral on the right-hand side, we find
\[
    \log H(1) \le \left[ -\frac{d + \beta b}{2} \log(\beta m - 2\alpha) \right]_0^1 = \frac{d + \beta b}{2} \log\left( \frac{\beta m}{\beta m - 2} \right).
\]
Using the elementary inequality $\log(1 + x) \le x$ for $x > 0$, we can bound the logarithmic term:
\[
    \log\left(\frac{\beta m}{\beta m - 2} \right)=\log\left( 1 + \frac{2}{\beta m - 2} \right) \le \frac{2}{\beta m - 2}.
\]
Substituting this into our bound yields
\[
    \log \int_{\mathbb{R}^d} e^{\|y\|^2} \, d\pi_{\beta}(y) \le \frac{d + \beta b}{2} \cdot \frac{2}{\beta m - 2} = \frac{d + \beta b}{\beta m - 2}.
\]
Finally, since the assumption $\beta \ge 4/m$ implies $\beta m \ge 4$, it follows that $\beta m - 2 \ge \frac{1}{2}\beta m$. Using this lower bound on the denominator, we arrive at the final result:
\[
    \log \int_{\mathbb{R}^d} e^{\|y\|^2} \, d\pi_{\beta}(y) \le \frac{d + \beta b}{\beta m / 2} = \frac{2(d + \beta b)}{\beta m}.
\]
This completes the proof.
\end{proof}

\subsection{Proof of Lemma~\ref{lem:KL_mu0_pi}}\label{app:proof_KL_mu0_pi}

\begin{proof}[Proof of Lemma~\ref{lem:KL_mu0_pi}]
For this proof we follow directly the proof of \cite[Lemma 5]{Raginsky2017}. We provide the proof again only because they had slightly different bounds on $F$ (they had $C=0$) and they had structure on $F$. Let $p$ denote the density of the Gibbs measure with respect to the Lebesgue measure on $\mathbb{R}^d$, i.e., $p(x) = e^{-\beta F(x)}/Z$, where $Z = \int_{\mathbb{R}^d} e^{-\beta F(x)} dx$. Since $p > 0$ everywhere, we can write
\begin{equation}\label{eq:KL_decomp}
\begin{aligned}
\KL(\mu_0||\pi_{\beta}) &= \int_{\mathbb{R}^d} p_0(x) \log \frac{p_0(x)}{p(x)} dx= \int_{\mathbb{R}^d} p_0(x) \log p_0(x) dx + \log Z + \beta \int_{\mathbb{R}^d} p_0(x) F(x) dx \\
&\le \log \|p_0\|_\infty + \log Z + \beta \int_{\mathbb{R}^d} p_0(x) F(x) dx.
\end{aligned}
\end{equation}
We first upper-bound the partition function $Z$
\begin{align*}
Z &= \int_{\mathbb{R}^d} e^{-\beta F(x)} dx \le \int_{\mathbb{R}^d} \exp\left( -\beta \left( \frac{m}{3}\|x\|^2 - \frac{b}{2}\log 3 + C \right) \right) dx \\
&= e^{\frac{\beta b}{2}\log 3 - \beta C} \int_{\mathbb{R}^d} e^{-\frac{m\beta}{3}\|x\|^2} dx = 3^{\beta b/2} e^{-\beta C} \left( \frac{3\pi_{\beta}}{m\beta} \right)^{d/2},
\end{align*}
where the inequality follows from Lemma~\ref{lem:quad_growth}(iii) applied to $F$. Thus,
\begin{equation}\label{eq:log_Z_bound}
\log Z \le \frac{d}{2} \log \frac{3\pi_{\beta}}{m\beta} + \frac{\beta b}{2}\log 3 - \beta C.
\end{equation}

Moreover, invoking Lemma~\ref{lem:quad_growth}(iii) once again, we have $F(x) \le \frac{M}{2}\|x\|^2 + B\|x\| + A$ for all $x\in \R^d$. Therefore,
\begin{align}
\int_{\mathbb{R}^d} F(x)p_0(x) dx &\le \int_{\mathbb{R}^d} p_0(x) \left( \frac{M}{2}\|x\|^2 + B\|x\| + A \right) dx \le \frac{M}{2} \kappa_0 + B\sqrt{\kappa_0} + A, \label{eq:energy_bound}
\end{align}
where we used $\mathbb{E}_{\mu_0}[\|x\|^2] \leq \mathbb{E}_{\mu_0}[e^{\|x\|^2}]= \kappa_0$ and we used Jensen's inequality $\mathbb{E}[\|x\|] \le \sqrt{\mathbb{E}[\|x\|^2]}$.

Substituting \eqref{eq:log_Z_bound} and \eqref{eq:energy_bound} into \eqref{eq:KL_decomp}, and grouping the constants $A$ and $C$, we get
\begin{equation}\label{eq:final_bound_on_KL_0}
\begin{aligned}
\KL(\mu_0||\pi_{\beta}) &\leq \log \|p_0\|_\infty + \frac{d}{2} \log \left(\frac{3\pi_{\beta}}{m\beta}\right) + \beta \left(\frac{M\kappa_0}{2} + B \sqrt{\kappa_0}+ (A - C) + \frac{b}{2}\log 3\right)\\
& \leq \beta\left(\frac{\left|\log \|p_0\|_\infty\right|}{\beta} + \frac{d}{2\beta} \left|\log \left(\frac{3\pi_{\beta}}{m\beta}\right)\right| +  \frac{M\kappa_0}{2} + B \sqrt{\kappa_0}+ (A - C) + \frac{b}{2}\log 3\right).
\end{aligned}
\end{equation}
Using the fact that $\beta \geq d\geq 1$, we can set 
\begin{equation}\label{eq:Ctilde_def}
    \tilde C := \left|\log\left( \|p_0\|_\infty\right)\right| + \frac{1}{2}\left|\log \left(\frac{3\pi_{\beta}}{m}\right)\right| + \frac{M\kappa_0}{2} + B \sqrt{\kappa_0}+ (A - C) + \frac{b}{2}\log 3,
    \end{equation}
    with $A = F(0)$, $B= \|\nabla F(0)\|$ and $C = \min F$.
\end{proof}

\begin{remark}[Scaling with dimension]\label{rem:scaling_with_dimension}
    A subtlety not explicitly discussed in \cite{Raginsky2017} is the dimensional dependence of the initialization constants. While $\kappa_0$ can technically be chosen independently of $d$ (e.g., by choosing a highly concentrated distribution), for standard distributions like the Gaussian, this typically forces the maximum log-density $\log(\|p_0\|_\infty)$ to scale as $O(d)$. For instance, if $\log(\|p_0\|_\infty) \le C_{p_0} d$, this term in our bound would a priori seemingly grow with dimension. However, our assumption $\beta \ge d$ allows us to absorb this growth. As seen in the proof, the term enters the final bound scaled by $\beta^{-1}$, i.e., as $\frac{1}{\beta} \left|\log \|p_0\|_\infty\right|$. Consequently, it remains bounded by the constant $C_{p_0}$. Thus, even in this high-dimensional regime, one could still define a dimension-independent constant
    \[
    \tilde C_{\text{new}} := C_{p_0} + \frac{1}{2}\left|\log \left(\frac{3\pi_{\beta}}{m}\right)\right| + \frac{M\kappa_0}{2} + B \sqrt{\kappa_0}+ (A - C) + \frac{b}{2}\log 3.
    \]
    We chose to retain the explicit dependence on $\log(\|p_0\|_\infty)$ and $\kappa_0$ in Lemma~\ref{lem:KL_mu0_pi} to maintain generality and follow standard literature conventions, noting that the linear dependence on $d$ is expected and, as shown here, can anyway effectively be controlled by the temperature $\beta$.
\end{remark}

\section{Proofs for Section \ref{sec:Inexact}}\label{app:proofs_Inexact}

\subsection{Proof of Proposition \ref{prop:iter-norm-bound-final}}\label{app:proof_moment_bounds_IULA}

\begin{proof}[Proof of Proposition \ref{prop:iter-norm-bound-final}] Let $\mathcal{F}_k = \sigma(x_0, \dots, x_k)$. From the update rule $x_{k + 1} = x_k - \gamma g(x_k, \xi_k) + \sqrt{2\gamma \beta^{-1}} z_k$, the independence of the noise $z_k \sim \mathcal{N}(0, I_d)$ and the fact that $\E[\|z_k\|^2]=d$, we have
\begin{equation}\label{eq:prec_cond_exp}
    \mathbb{E}[\| x_{k + 1} \|^2 | \mathcal{F}_k] = \mathbb{E}_{\xi_k}[\| x_k - \gamma g(x_k, \xi_k) \|^2 | \mathcal{F}_k] + 2 \gamma \beta^{-1} d.
\end{equation}
Using the weighted Young's inequality $2\langle u, v \rangle \le \alpha \|u\|^2 + \frac{1}{\alpha}\|v\|^2$ with $\alpha = \gamma m$, we decompose the first term without using unbiasedness
\begin{align*}
    \| x_k - \gamma g_k \|^2 &= \| (x_k - \gamma \nabla F(x_k)) + \gamma (\nabla F(x_k) - g_k) \|^2 \\
    &\le (1 + \gamma m) \| x_k - \gamma \nabla F(x_k) \|^2 + \left( \gamma^2 + \frac{\gamma}{m} \right) \| g_k - \nabla F(x_k) \|^2.
\end{align*}
Applying the conditional expectation $\mathbb{E}_{\xi_k}[\cdot \mid \mathcal{F}_k]$ and the bound $\mathbb{E}[\|g - \nabla F\|^2 \mid \mathcal{F}_k] \le \delta(P\|x_k\|^2 + Q)$
\begin{equation}\label{eq:prec_refined_step}
    \mathbb{E}_{\xi_k}[\| x_k - \gamma g_k \|^2\mid \mathcal{F}_k] \le (1 + \gamma m) \| x_k - \gamma \nabla F(x_k) \|^2 + \left( \gamma^2 + \frac{\gamma}{m} \right) \delta (P\|x_k\|^2 + Q).
\end{equation}
From $(m, b)$-dissipativity and $M$-smoothness, we have 
\[\begin{aligned}
\| x_k - \gamma \nabla F(x_k) \|^2 &= \|x_k\|^2 - 2\gamma \langle x_k, \nabla F(x_k)\rangle + \gamma^2 \|\nabla F(x_k)\|^2 \\
& \leq \|x_k\|^2 - 2\gamma m \|x_k\|^2 + 2\gamma b + 2\gamma^2 M^2 \|x_k\|^2 + 2 \gamma^2 B^2  \\
&= (1 - 2\gamma m + 2\gamma^2 M^2)\|x_k\|^2 + 2\gamma b + 2\gamma^2 B^2.\end{aligned}\] On the other hand
\begin{align*}
    (1 + \gamma m)(1 - 2\gamma m + 2\gamma^2 M^2) &= 1 - 2\gamma m + 2\gamma^2 M^2 + \gamma m - 2\gamma^2 m^2 + 2\gamma^3 m M^2 \\
    &= 1 - \gamma m + 2\gamma^2 M^2 - 2\gamma^2 m^2 + 2\gamma^3 m M^2.
\end{align*}
Since $\gamma \le \frac{m}{M^2}$, then $2\gamma^3 m M^2 \le 2\gamma^2 m^2$, ensuring $2\gamma^3 m M^2 - 2\gamma^2 m^2 \le 0$, the term simplifies to $1 - \gamma m + 2\gamma^2 M^2$. Substituting into \eqref{eq:prec_refined_step} and then into \eqref{eq:prec_cond_exp}
\begin{align*}
    \mathbb{E}[\|x_{k+1}\|^2 | \mathcal{F}_k] &\le \left( 1 - \gamma m + 2\gamma^2 M^2 + \frac{\gamma \delta P}{m} + \gamma^2 \delta P \right)\|x_k\|^2 + \gamma \tilde C(\gamma, \beta, d) \\
    &= \left( 1 - \gamma m + \frac{\gamma \delta P}{m} + \gamma^2 (2M^2 + \delta P) \right)\|x_k\|^2 + \gamma \tilde C(\gamma, \beta, d).
\end{align*}
where $\tilde C(\gamma, \beta, d) = (1+\gamma m) (2b + 2\gamma B^2)+ \gamma Q\delta + \frac{Q\delta}{m} + 2\beta^{-1} d$. Exploiting $\beta \geq d$, $\gamma \leq 1$ and the hypothesis on $\delta$, we have
\begin{equation*}\label{eq:Ctemporary_def}
\tilde C(\gamma, \beta, d)\le C,\quad \text{with} \quad C := (1+m)(2b + 2 B^2) + \frac{m^2 Q}{4P} + \frac{mQ}{4P} + 2.
\end{equation*}
Notice that by hypothesis that we also have $\gamma \le \frac{m}{3(2M^2+\delta P)}$, so that by $\delta \le \frac{m^2}{4P}$ we have $\frac{\gamma \delta P}{m} \le \frac{\gamma m}{3}$. From $\gamma \le \frac{m}{3(2M^2 + \delta P)}$ we also have $\gamma^2 (2M^2 + \delta P) \le \frac{\gamma m}{3}$. Substituting these yields the final coefficient: $1 - \gamma m + \frac{\gamma m}{3} + \frac{\gamma m}{3} = 1 - \frac{\gamma m}{3}$. Taking total expectations gives $\mathbb{E}[\|x_{k+1}\|^2] \le (1 - \frac{m\gamma}{3})\mathbb{E}[\|x_k\|^2] + \gamma C$. By $\frac{m\gamma}{3} \leq \frac{m^2}{24 M^2}\leq \frac{1}{24}$, then $0<(1 - \frac{m\gamma}{3}) < 1$ and the sequence is uniformly bounded. In particular we have
\begin{equation*}
    \mathbb{E}[\|x_{k+1}\|^2] \le \left(1 - \frac{m\gamma}{3}\right)^k\mathbb{E}[\|x_0\|^2] + \gamma C\sum_{i=1}^k \left(1-\frac{m\gamma}{3}\right)^i\leq \mathbb{E}[\|x_0\|^2]+\frac{3C}{m}.    \end{equation*}
    We can therefore set
    \begin{equation}\label{eq:Gamma_def}
    \Gamma:=\kappa_0+\frac{3(1+m)(2b+2B^2)}{m}+\frac{3(m+1)Q}{4P}+\frac{6}{m}.
    \end{equation}
\end{proof}

\subsection{Proof of Lemma~\ref{lem:exp_bound}.}\label{app:further_proofs_lem}

\begin{lemma}\label{lem:sph_exp}
    Let $I \in \mathbb{R}^{d \times d}$ be the identity matrix and $\sigma$ be the normalized measure on the sphere. Then
    \begin{equation*}
        \int_{\mathbb{S}^{d - 1}} v v^\intercal \, d\sigma(v) = \frac{1}{d} I.
    \end{equation*}
\end{lemma}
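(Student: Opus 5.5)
The plan is to use rotational invariance of the normalized surface measure $\sigma$ on $\mathbb{S}^{d-1}$, so that the matrix $S:=\int_{\mathbb{S}^{d-1}} vv^\intercal\,d\sigma(v)$ is forced to be a scalar multiple of the identity, and then to identify the scalar by taking the trace. The integral is well defined entrywise, since every entry $v_iv_j$ is bounded by $1$ on the sphere and $\sigma$ is a probability measure.

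\emph{Step 1 (invariance).} Fix any orthogonal matrix $U\in O(d)$. Because $\sigma$ is invariant under orthogonal maps, the change of variables $v\mapsto Uv$ gives
\[
U S U^\intercal=\int (Uv)(Uv)^\intercal\,d\sigma(v)=\int ww^\intercal\,d\sigma(w)=S .
\]

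\emph{Step 2 (scalar structure).} To get the off-diagonal entries, choose $U$ to be the reflection that flips only the sign of the $i$-th coordinate. Then $(USU^\intercal)_{ij}=-S_{ij}$ for $j\neq i$, so $S_{ij}=0$ whenever $i\ne j$. To get the diagonal entries, choose $U$ to be the permutation matrix swapping coordinates $i$ and $j$. This gives $S_{ii}=S_{jj}$. Together these show $S=cI$ for some $c\in\mathbb{R}$.

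\emph{Step 3 (identifying $c$).} Take traces and use linearity of the integral:
\[
dc=\operatorname{tr}S=\int \operatorname{tr}(vv^\intercal)\,d\sigma(v)=\int\|v\|^2\,d\sigma(v)=1 ,
\]
since $\|v\|=1$ on the sphere and $\sigma$ is a probability measure. Hence $c=1/d$ and $S=\frac{1}{d}I$.

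There is no real obstacle in this argument. The only point that needs care is the invariance of $\sigma$ under reflections and permutations. This holds because $\sigma$ is the normalized Hausdorff measure on $\mathbb{S}^{d-1}$ and these maps are isometries preserving the sphere. Equivalently, $\sigma$ is the law of $g/\|g\|$ with $g\sim\mathcal{N}(0,I_d)$, and the Gaussian is orthogonally invariant.
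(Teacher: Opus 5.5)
Your argument is correct and complete. Orthogonal invariance of $\sigma$, applied to coordinate reflections and transpositions, forces $S=cI$, and the trace identity $\operatorname{tr}(vv^\intercal)=\|v\|^2=1$ gives $c=1/d$. The paper gives no argument of its own and only defers to Lemma 7.3(b) of Gao et al.; your symmetry-plus-trace proof is the standard argument for this fact and works as a self-contained replacement for that citation.
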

\begin{proof}
    This result follows the same line of \cite[Lemma 7.3, point (b)]{Gao2018}.
\end{proof}

\begin{lemma}\label{lem:gaussian_bound}
    If $Z\sim\mathcal{N}(0,I_d)$, $u\in\mathbb{R}^d$, and $v<\tfrac12$, then
\begin{equation}\label{eq:gauss_id}
\mathbb{E}\big[\exp(\langle u, Z\rangle+v\|Z\|^2)\big]
=(1-2v)^{-d/2}\exp\Big(\frac{\|u\|^2}{2(1-2v)}\Big).
\end{equation}
\end{lemma}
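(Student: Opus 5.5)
The plan is to compute the expectation directly from the Gaussian density, completing the square in the exponent. Since $Z$ has density $(2\pi)^{-d/2}e^{-\|z\|^2/2}$, we have
\[
\mathbb{E}\big[\exp(\langle u,Z\rangle+v\|Z\|^2)\big]
=(2\pi)^{-d/2}\int_{\mathbb{R}^d}\exp\Big(\langle u,z\rangle-\frac{1-2v}{2}\|z\|^2\Big)\,dz .
\]
The hypothesis $v<\tfrac12$ guarantees $a:=1-2v>0$. The integrand is therefore a Gaussian kernel, so the integral is finite and Tonelli's theorem justifies every manipulation below.

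The key step is completing the square:
\[
\langle u,z\rangle-\frac a2\|z\|^2=-\frac a2\Big\|z-\frac{u}{a}\Big\|^2+\frac{\|u\|^2}{2a}.
\]
The factor $e^{\|u\|^2/(2a)}$ comes out of the integral. Translating $z\mapsto z+u/a$ is Lebesgue-measure preserving, which leaves $\int e^{-a\|z\|^2/2}\,dz=(2\pi/a)^{d/2}$. Multiplying by $(2\pi)^{-d/2}$ gives
\[
a^{-d/2}e^{\|u\|^2/(2a)}=(1-2v)^{-d/2}\exp\Big(\frac{\|u\|^2}{2(1-2v)}\Big),
\]
which is exactly \eqref{eq:gauss_id}.

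An alternative route is to factorize over the coordinates, since $\|Z\|^2$ and $\langle u,Z\rangle$ both split as sums. This reduces the claim to the one-dimensional identity $\mathbb{E}[e^{u_iZ_i+vZ_i^2}]=(1-2v)^{-1/2}e^{u_i^2/(2(1-2v))}$, which follows from the same completion of the square in one variable. There is no real obstacle here. The only points needing care are recording that $v<\tfrac12$ is exactly the integrability condition (for $v\ge\tfrac12$ the expectation is infinite) and keeping the normalizing constant $(2\pi/a)^{d/2}$ correct.
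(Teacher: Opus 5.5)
Your proof is correct and follows the paper's argument essentially verbatim: write the expectation against the Gaussian density, merge the exponents into $\langle u,z\rangle-\frac{1-2v}{2}\|z\|^2$, complete the square, and evaluate the shifted Gaussian integral $(2\pi/(1-2v))^{d/2}$. Your remark that $v<\tfrac12$ is exactly the integrability condition is a small addition that the paper leaves implicit.
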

\begin{proof}[Proof of Lemma~\ref{lem:gaussian_bound}]
Let $Z \sim \mathcal{N}(0, I_d)$. Writing the expectation as an integral over the density of $Z$:
\begin{align*}
\mathbb{E}\left[e^{\langle u, Z\rangle+v\|Z\|^2}\right] &= (2\pi_{\beta})^{-d/2} \int_{\mathbb{R}^d} \exp\left( -\frac{1}{2}\|z\|^2 + \langle u, z\rangle + v\|z\|^2 \right) \, dz \\
&= (2\pi_{\beta})^{-d/2} \int_{\mathbb{R}^d} \exp\left( -\frac{1}{2}(1-2v)\|z\|^2 + \langle u, z\rangle \right) \, dz
\end{align*}
Completing the square in the exponent for $z$, we observe
\[
-\frac{1-2v}{2}\|z\|^2 + \langle u, z\rangle = -\frac{1-2v}{2} \left\| z - \frac{u}{1-2v} \right\|^2 + \frac{\|u\|^2}{2(1-2v)}.
\]
Substituting this back into the integral and factoring out the constant term we obtain
\[
\mathbb{E}\left[e^{\langle u, Z\rangle+v\|Z\|^2}\right]  = \exp\left( \frac{\|u\|^2}{2(1-2v)} \right) (2\pi_{\beta})^{-d/2} \int_{\mathbb{R}^d} \exp\left( -\frac{1-2v}{2} \left\| z - \frac{u}{1-2v} \right\|^2 \right) \, dz.
\]
Using the Gaussian integral identity $\int_{\mathbb{R}^d} e^{-\frac{\lambda}{2}\|y\|^2} dy = (2\pi_{\beta})^{d/2}\lambda^{-d/2}$ with $\lambda = 1-2v$ we arrive to
\begin{align*}
\mathbb{E}\left[e^{\langle u, Z\rangle+v\|Z\|^2}\right]  &= \exp\left( \frac{\|u\|^2}{2(1-2v)} \right) (2\pi_{\beta})^{-d/2} \cdot (2\pi_{\beta})^{d/2} (1-2v)^{-d/2} \\
&= (1-2v)^{-d/2} \exp\left( \frac{\|u\|^2}{2(1-2v)} \right). \qedhere
\end{align*}
\end{proof}

\bigskip

\begin{proof}[Proof of Lemma~\ref{lem:exp_bound}]
By Lemma \ref{lem:quad_growth} (ii) applied to $F$ we have that $\|\nabla F(x)\| \leq M\|x\| + B$, for all $x\in \R^d$. First of all, notice that the hypothesis in \eqref{eq:conditions_uniform_bound} on $\beta$ and $\gamma$ imply the following
\begin{equation}\label{eq:weaker_conditions_uniform_bound}
\beta>4/m, \quad \text{and} \quad \gamma \le \min\left\{1,\frac{\beta}{8},\frac{m\beta-4}{4\beta M^2}, \frac{1}{m-4/\beta}\right\},
\end{equation}
To show this we just need to notice that $\beta \geq \frac{8}{m}>\frac{4}{m}$, $\gamma \leq \frac{1}{m}\leq \frac{\beta}{8}$ and since $\frac{m\beta - 4}{\beta}= m - \frac{4}{\beta} \geq \frac{m}{2}$ we have $\gamma \leq \frac{m}{8M^2}\leq \frac{m\beta -4}{4\beta M^2}$. Finally, $\gamma \leq \frac{1}{m}\leq \frac{2}{m}\leq \frac{\beta}{m\beta - 4}=\frac{1}{m-4/\beta}$. From now on in the proof we will work solely on these weaker conditions, producing in this way a stronger result than the one needed. In the statement of the Lemma we adopted instead the stronger conditions \eqref{eq:conditions_uniform_bound} just for simplicity.

Let $(z_k)_{k\ge 0}$ be i.i.d.\ standard Gaussians in $\mathbb{R}^d$, $z_k\sim\mathcal{N}(0,I_d)$, and consider the ULA update
\[
y_{k+1}=y_k-\gamma\nabla F(y_k)+\sqrt{\frac{2\gamma}{\beta}}\,z_k.
\]
Define the function $V(x):=\exp(\|x\|^2)$ and set $a_k:=y_k-\gamma\nabla F(y_k)$.
Then
\[
\|y_{k+1}\|^2=\|a_k\|^2+\sqrt{\frac{8\gamma}{\beta}}\langle a_k,z_k\rangle+\frac{2\gamma}{\beta}\|z_k\|^2,
\]
hence, by tower property,
\begin{equation}\label{eq:tower}
\mathbb{E}[V(y_{k+1})]
=\mathbb{E}\Big[\exp(\|a_k\|^2)\,
\mathbb{E}\Big[\exp\Big(\sqrt{\tfrac{8\gamma}{\beta}}\langle a_k,z_k\rangle+\tfrac{2\gamma}{\beta}\|z_k\|^2\Big)\,\Big|\,y_k\Big]\Big].
\end{equation}

Now we apply Lemma \ref{lem:gaussian_bound} conditionally on $y_k$ with $u=\sqrt{\frac{8\gamma}{\beta}}\,a_k$ and $v=\frac{2\gamma}{\beta}$. The condition $v<\tfrac12$ is $\beta>4\gamma$. We obtain
\[
\mathbb{E}\left[\exp\left(\sqrt{\tfrac{8\gamma}{\beta}}\langle a_k,z_k\rangle+\tfrac{2\gamma}{\beta}\|z_k\|^2\right)\,\Big|\,y_k\right]
=\left(1-\tfrac{4\gamma}{\beta}\right)^{-d/2}\exp\left(\frac{4\gamma}{\beta-4\gamma}\|a_k\|^2\right).
\]
Plugging this into \eqref{eq:tower} and combining the $\|a_k\|^2$-terms yields
\begin{equation}\label{eq:D25_analogue}
\mathbb{E}[V(y_{k+1})]
=(1-\tfrac{4\gamma}{\beta})^{-d/2}\,
\mathbb{E}\Big[\exp\Big(\frac{\beta}{\beta-4\gamma}\|y_k-\gamma\nabla F(y_k)\|^2\Big)\Big].
\end{equation}

Expand and use dissipativity of $F$ (i.e.\ $\langle y,\nabla F(y)\rangle\ge m\|y\|^2-b$)
\begin{equation}\label{eq:euler_1}
\begin{aligned}
\|y_k-\gamma\nabla F(y_k)\|^2
&=\|y_k\|^2-2\gamma\langle y_k,\nabla F(y_k)\rangle+\gamma^2\|\nabla F(y_k)\|^2\\
&\le (1-2m\gamma)\|y_k\|^2+2\gamma b+\gamma^2\|\nabla F(y_k)\|^2.
\end{aligned}
\end{equation}
Using the linear growth bound $\|\nabla F(x)\|\le M\|x\|+B$ and $(a+b)^2\le 2a^2+2b^2$, we get
\[
\|\nabla F(y_k)\|^2\le 2M^2\|y_k\|^2+2B^2.
\]
Substitute into \eqref{eq:euler_1}:
\begin{equation}\label{eq:euler_2}
\|y_k-\gamma\nabla F(y_k)\|^2
\le \big(1-2m\gamma+2\gamma^2M^2\big)\|y_k\|^2+2\gamma b+2\gamma^2B^2.
\end{equation}

Combining \eqref{eq:D25_analogue} and \eqref{eq:euler_2} gives
\begin{equation}\label{eq:cond_bound_raw}
\mathbb{E}[V(y_{k+1})]
\le (1-\tfrac{4\gamma}{\beta})^{-d/2}\,
\exp\Big(\frac{\beta}{\beta-4\gamma}(2\gamma b+2\gamma^2B^2)\Big)\,
\mathbb{E}\Big[\exp\Big(\frac{\beta}{\beta-4\gamma}(1-2m\gamma+2\gamma^2M^2)\|y_k\|^2\Big)\Big].
\end{equation}
Since $4\gamma/\beta\in(0,1/2]$ we can use the inequality $-\log(1-x)\le 2x$ that holds for $x\in(0,1/2]$, and deduce
\begin{equation}\label{eq:prefactor_bd}
    \left(1-\frac{4\gamma}{\beta}\right)^{-d/2}=\exp\left(-\frac{d}{2}\log\left(1-\frac{4\gamma}{\beta}\right)\right)\le \exp\left(\frac{4d\gamma}{\beta}\right).
\end{equation}
Moreover, since $\frac{4\gamma}{\beta}\le 1/2$, we have $\frac{\beta}{\beta-4\gamma}=\left(1-\frac{4\gamma}{\beta}\right)^{-1}\le 2$, and using $\gamma\le 1$,
\begin{equation}\label{eq:const_bd}
\frac{\beta}{\beta-4\gamma}(2\gamma b+2\gamma^2B^2)
\le 2(2\gamma b+2\gamma^2B^2)\le 4\gamma b+4\gamma B^2.
\end{equation}

Define now
\[
Q(\gamma):=\frac{\beta}{\beta-4\gamma}(1-2m\gamma+2M^2\gamma^2).
\]
By assumption we have that $\gamma\le \beta/8$, so that $4\gamma/\beta\le 1/2$. Then $\frac{\beta}{\beta-4\gamma}
=\left(1-\frac{4\gamma}{\beta}\right)^{-1}
\le 1+\frac{8\gamma}{\beta}$. Consequently,
\begin{align*}
Q(\gamma)
&\le (1-2m\gamma+2M^2\gamma^2)\Big(1+\frac{8\gamma}{\beta}\Big)= 1-(2m-\tfrac{8}{\beta})\gamma
   +2M^2\gamma^2
   -\frac{16m}{\beta}\gamma^2
   +\frac{16M^2}{\beta}\gamma^3 \\
&\le 1-(2m-\tfrac{8}{\beta})\gamma
   +2M^2\gamma^2
   +\frac{16M^2}{\beta}\gamma^3 .
\end{align*}
Using again $\gamma\le \beta/8$, we have $\frac{16M^2}{\beta}\gamma^3 \le 2M^2\gamma^2$, and therefore $Q(\gamma)
\le 1-(2m-\tfrac{8}{\beta})\gamma +4M^2\gamma^2$. Since $\beta>4/m$ and $\gamma \le \frac{m\beta-4}{4\beta M^2}$.
Then $6M^2\gamma^2
\le \Big(m-\frac{4}{\beta}\Big)\gamma$, and we conclude that
\begin{equation}\label{eq:Q_contract}
Q(\gamma)
\le 1-\Big(m-\frac{4}{\beta}\Big)\gamma.
\end{equation}
Combining \eqref{eq:cond_bound_raw} with \eqref{eq:prefactor_bd}, \eqref{eq:const_bd}, and \eqref{eq:Q_contract}, we obtain
\[
V_{k+1}:=\mathbb E\!\left[V(y_{k+1})\right]
\le
\exp\!\Big(\tfrac{4d\gamma}{\beta}\Big)\,
\exp\!\big(4\gamma b+4\gamma B^2\big)\,\mathbb{E}\left[\exp\left((1-\tilde m\gamma)\|y_k\|^2\right)\right],
\]
where $\tilde m := \left(m-\frac{4}{\beta}\right)$. Let $K:=\frac{4d}{\beta}+4b+4B^2$. Then
\begin{equation}\label{eq:cond_final}
V_{k+1}\le
\exp(K\gamma)\,\mathbb{E}\left[\exp\left((1-\tilde m\gamma)\|y_k\|^2\right)\right]=\exp(K\gamma)\,\mathbb{E}\!\left[V(y_k)^{\,1-\tilde m\gamma}\right].
\end{equation}
Since $0<1-\tilde m\gamma\le 1$, the map $x\mapsto x^{1-\tilde m\gamma}$ is concave on $[0,\infty)$,
and Jensen's inequality gives
\[
\mathbb E\!\left[V(y_k)^{\,1-\tilde m\gamma}\right]\le \big(\mathbb E[V(y_k)]\big)^{1-\tilde m\gamma}=V_k^{\,1-m\gamma}.
\]
Hence $ V_{k+1}\le e^{K\gamma}\,V_k^{\,1-\tilde m\gamma }$, and taking logs yields $\log V_{k+1}\le K\gamma + (1-\tilde m \gamma)\log V_k$. Iterating we obtain
\[
\log V_k\le (1-m\gamma)^k \log V_0 + K\gamma\sum_{j=0}^{k-1}(1-\tilde m\gamma)^j
\le \log V_0 + \frac{\beta K}{m\beta-4}.
\]
This is a general bound we achieved by using the weaker conditions \eqref{eq:weaker_conditions_uniform_bound}. Coming back to the original conditions \eqref{eq:conditions_uniform_bound}, since $\beta \geq \frac{8}{m}$, we obtain $\frac{\beta}{m\beta-4}\leq \frac{2}{m}$ and thus
\[
\log V_k\le \log V_0 + \frac{2 K}{m} =  \log V_0 + \frac{8d+8\beta b+8\beta B^2}{m\beta}.
\]
\end{proof}

\section{Details of the comparison with existing works}
\label{app:comparisons}

In this appendix we provide the derivations behind the comparison statements in Sections~\ref{sec:comparisons_ULA} and~\ref{sec:comparisons_I}. We focus on the dependence on \(\beta\), \(\epsilon\), and \(\CLS(\beta,d)\), since these are the quantities that dominate the complexity in the non-convex regime.

\subsection{Exact and Inexact ULA}\label{app:comparisons_ULA}

\paragraph{Raginsky, Rakhlin and Telgarsky (2017)~\cite{Raginsky2017}.}

Raginsky, Rakhlin and Telgarsky~\cite{Raginsky2017} obtain optimization guarantees by first deriving sampling bounds in Wasserstein distance and then converting these bounds into expected objective-value estimates. In our notation, combining \cite[Proposition~10]{Raginsky2017} with \cite[Lemma~6]{Raginsky2017} gives a bound of the form
\begin{equation}\label{eq:RRT_W2_rate_app}
    \E[F(x_k)]-\E[F(x^\pi_{\beta})]
    \leq
    \left(
        \sqrt{\beta}\,C_0^{\RRT}\delta^{1/4}
        +
        \sqrt{\beta}\,C_1^{\RRT}\gamma^{1/4}
    \right)k\gamma
    +
    \sqrt{\beta\CLS(\beta,d)}\,C_2^{\RRT}
    e^{-k\gamma/(\beta\CLS(\beta,d))},
\end{equation}
where \(x^\pi\sim\pi_\beta\), and \(C_i^{\RRT}\) denote constants independent of the algorithmic parameters. The Gibbs concentration term is the same as in our analysis, so one first chooses $\beta=\tilde\Theta\left(\frac{d}{\epsilon}\right)$. To control the exponential term in \eqref{eq:RRT_W2_rate_app}, one needs
\begin{equation}\label{eq:RRT_time_horizon_app}
    k\gamma
    =
    O\left(
        \beta\CLS(\beta,d)
        \log\left(
            \frac{\sqrt{\beta\CLS(\beta,d)}}{\epsilon}
        \right)
    \right)
    =
    \tilde O\left(\beta\CLS(\beta,d)\right).
\end{equation}

In the exact-gradient case, \(\delta=0\). Then the discretization term in \eqref{eq:RRT_W2_rate_app} requires $\sqrt{\beta}\,\gamma^{1/4}k\gamma
    \lesssim
    \epsilon$, and therefore
\[
    \gamma
    =
    O\left(
        \frac{\epsilon^4}{\beta^2(k\gamma)^4}
    \right)
    =
    \tilde O\left(
        \frac{\epsilon^4}{\beta^6\CLS(\beta,d)^4}
    \right).
\]
Combining this with \eqref{eq:RRT_time_horizon_app} gives
\[
    k
    =
    \tilde O\left(
        \frac{\beta\CLS(\beta,d)}{\gamma}
    \right)
    =
    \tilde O\left(
        \frac{\beta^7\CLS(\beta,d)^5}{\epsilon^4}
    \right).
\]
This is worse than our exact-gradient complexity in \eqref{eq:total_count_needed_ULA}. The same estimate also gives an inexact-gradient comparison. In this case, the term involving \(\delta\) in \eqref{eq:RRT_W2_rate_app} requires $\sqrt{\beta}\,\delta^{1/4}k\gamma
    \lesssim
    \epsilon$, and hence
\[
    \delta
    =
    O\left(
        \frac{\epsilon^4}{\beta^2(k\gamma)^4}
    \right)
    =
    \tilde O\left(
        \frac{\epsilon^4}{\beta^6\CLS(\beta,d)^4}
    \right).
\]
Thus, the black-box Wasserstein route based on \cite{Raginsky2017} gives
\[
    k
    =
    \tilde O\left(
        \frac{\beta^7\CLS(\beta,d)^5}{\epsilon^4}
    \right),
    \qquad
    \delta
    =
    \tilde \Theta\left(
        \frac{\epsilon^4}{\beta^6\CLS(\beta,d)^4}
    \right),
\]
which is worse than our inexact-ULA result. One can obtain a sharper comparison by going back to the intermediate estimates in \cite{Raginsky2017}. In particular, combining \cite[Lemma~7]{Raginsky2017} with the Bolley--Villani inequality \cite[Corollary~2.3]{BolleyVillani}, one obtains
\[
    W_2(\mu_k,\nu^c_{k\gamma})
    \leq
    C_\nu^{\RRT}
    \left(
        \sqrt{\KL(\mu_k\|\nu^c_{k\gamma})}
        +
        \KL(\mu_k\|\nu^c_{k\gamma})^{1/4}
    \right),
\]
where \(\nu^c_t\) denotes the law of the continuous-time Langevin diffusion at time \(t\). In the relevant accuracy regime, the dominant contribution is the \(\KL^{1/4}\) term. Tracking the dependence on \(\beta\), \(d\), and \(\CLS(\beta,d)\), this leads to the improved stepsize condition $\gamma
    =
    O\left(
        \frac{\epsilon^4}{\beta(k\gamma)^3}
    \right)
    =
    \tilde O\left(
        \frac{\epsilon^4}{\beta^4\CLS(\beta,d)^3}
    \right)$.
Together with \eqref{eq:RRT_time_horizon_app}, this gives, in the exact-gradient case,
\[
    k
    =
    \tilde O\left(
        \frac{\beta^5\CLS(\beta,d)^4}{\epsilon^4}
    \right).
\]
In the inexact-gradient case, the same refined argument requires $\delta
    =
    O\left(
        \frac{\epsilon^4}{\beta(k\gamma)^3}
    \right)
    =
    \tilde O\left(
        \frac{\epsilon^4}{\beta^4\CLS(\beta,d)^3}
    \right)$,
and therefore yields
\[
    k
    =
    \tilde O\left(
        \frac{\beta^5\CLS(\beta,d)^4}{\epsilon^4}
    \right),
    \qquad
    \delta
    =
    \tilde O\left(
        \frac{\epsilon^4}{\beta^4\CLS(\beta,d)^3}
    \right).
\]
Even this refined reading remains worse than our bounds, both in the number of iterations and in the admissible inexactness level.

\paragraph{Xu, Chen, Zou and Gu (2018)~\cite{XCZG2018}.}

A direct quantitative comparison with Xu, Chen, Zou and Gu~\cite{XCZG2018} is delicate. Their GLD and SGLD rates are expressed in terms of a discrete-time spectral gap \(\lambda(\beta,d)\), together with constants arising from geometric-ergodicity and Poisson-equation estimates. These constants are inherited from the approach of Mattingly, Stuart and Higham~\cite{Mattingly2002}, and are not tracked explicitly in terms of \(\beta\), \(d\), and the structural constants of \(F\). This lack of explicit dependence is important in our setting. The inverse temperature must scale as \(\beta=\tilde\Theta(d/\epsilon)\), and constants such as \(\CLS(\beta,d)\) may depend exponentially on \(\beta\). Consequently, a statement such as $ \lambda(\beta,d)=e^{-\tilde O(d)}$
is too coarse for the comparison performed here. Indeed, the quantities \(\CLS(\beta,d)^{-1}\), \(\CLS(\beta,d)^{-2}\), and \(\CLS(\beta,d)^{-3}\) may all be absorbed into such notation, while leading to substantially different final complexities. Moreover, the proof strategy in \cite{XCZG2018} follows the standard route of first controlling a Wasserstein sampling distance and then converting it into a function-value estimate. This is precisely the route that our analysis avoids. Therefore, although the bounds of \cite{XCZG2018} are qualitatively informative, they do not provide a fully explicit parameter calibration directly comparable to Corollary~\ref{cor:main} or Corollary~\ref{cor:main_I} without additional explicit estimates for \(\lambda(\beta,d)\) and for the Poisson-equation constants.

\paragraph{Zou, Xu and Gu (2021)~\cite{Zou2021Faster}.}

In~\cite{Zou2021Faster} the authors derive a total gradient-evaluation complexity of order
\(k=\tilde O\left(
        \frac{d^4\beta^2}{\rho^4\epsilon^2}
    \right),\)
where \(\rho\) is a Cheeger constant. To compare this expression with our bounds, recall that, as noted in \cite[Remark~4.6]{Zou2021Faster}, the Cheeger constant is related to the Poincaré constant appearing in their notation by $\rho=O\left(d^{-1/2}c_p^{\mathrm{ZXG}}\right)$. The notation \(c_p^{\mathrm{ZXG}}\) corresponds to the reciprocal of the Poincaré constant \(C_P(\beta,d)\) used in \cite{Raginsky2017} and by us, i.e., $c_p^{\mathrm{ZXG}}
    =
    \frac{1}{C_P(\beta,d)}$. Using \eqref{eq:CLS_bound}, the Log-Sobolev constant in our setting satisfies $\CLS(\beta,d)
    =
    O(C_P(\beta,d))
    =
    O\left((c_p^{\mathrm{ZXG}})^{-1}\right)$. Consequently, $\rho
    =
    O\left(
        \frac{1}{d^{1/2}\CLS(\beta,d)}
    \right)$. Substituting this into the bound of \cite{Zou2021Faster} we obtain
    \[k=\tilde O\!\left(\dfrac{\beta^2d^6\CLS^4}{\epsilon^2}\right).\] While they have appearing the term \(\CLS(\beta,d)^4\), our exact and inexact ULA bounds depend instead on \(\CLS(\beta,d)^2\) in the iteration complexity. Since \(\CLS(\beta,d)\) may scale exponentially with \(\beta\) and \(d\), this difference is significant in the non-convex regime.

\paragraph{Kinoshita and Suzuki (2022)~\cite{KinoshitaSuzuki2022}.}

Kinoshita and Suzuki~\cite{KinoshitaSuzuki2022} provide one of the sharpest existing iteration complexities for exact-gradient ULA in this setting:
\[
    k
    =
    \tilde O\left(
        \frac{\beta^2d\,\CLS(\beta,d)^3}{\epsilon}
    \right).
\]
Compared with our bound in \eqref{eq:total_count_needed_ULA},
their result has a better direct dependence on \(\epsilon\), but a worse dependence on \(\CLS(\beta,d)\). Since \(\beta=\tilde\Theta(d/\epsilon)\) and \(\CLS(\beta,d)\) may depend exponentially on \(\beta\), the power of \(\CLS(\beta,d)\) is the dominant term in the non-convex regime. There is also an important structural difference in the treatment of the Gibbs concentration term. Their argument uses a comparison of the form
\[
    \mathbb E[F(x_k)]-\mathbb E[F(x^\pi_{\beta})]
    \leq
    W_2^2(\mu_k,\pi_{\beta})
    +
    \mathbb E[F(x^\pi_{\beta})]-\min F,
\]
which leads to $\mathbb E[F(x_k)]-\min F
    \leq
    W_2^2(\mu_k,\pi_{\beta})
    +
    2\left(\mathbb E[F(x^\pi_{\beta})]-\min F\right)$. The factor \(2\) in front of the Gibbs concentration term is relevant because the inverse temperature \(\beta\) enters the Log-Sobolev constant $\CLS(\beta,d)$. In contrast, our decomposition keeps this term with coefficient \(1\), which allows us to use the sharp leading scale \(d/(2\beta)\) for the Gibbs concentration error. As discussed in Remark~\ref{rem:exact_choice_of_beta}, even constant-factor increases in the required inverse temperature may have an exponential effect through \(\CLS(\beta,d)\).

\paragraph{Chen, Sekhari and Sridharan (2024).}

We compare our bounds with the recent Lyapunov-potential approach of Chen, Sekhari and Sridharan~\cite{Chen2024}. Their Theorem~8 is stated in the smooth dissipative setting and, in addition, their theorem assumes that the Lyapunov potential associated with the hitting time of the \(\epsilon\)-sublevel set satisfies a self-bounding regularity condition up to third order. This assumption is not a standard regularity assumption on \(F\) itself, but rather an assumption on the hitting-time Lyapunov object used in their proof. Their result gives, for both GLD and SGLD, a high-probability guarantee of reaching a point \(x\) such that \(F(x)-\min F\leq \epsilon\) with oracle complexity
\[
    \tilde O\left(
        \max\left\{
            d^3\max(C_P(\beta,d),1)^3,\,
            \frac{d^2\max(C_P(\beta,d),1)^2}{\epsilon^2}
        \right\}
        \log(1/\delta)
    \right).
\]
This guarantee is of a different nature from ours: it is a hitting-time guarantee, whereas our main results control the expected excess risk of the iterate. For exact gradients, our Corollary~\ref{cor:main} gives quadratic dependence on \(\CLS(\beta,d)\), while the stated Lyapunov-potential bound contains the term $d^3\max(C_P(\beta,d),1)^3$. Since the logarithmic Sobolev inequality implies the Poincar\'e inequality and in our setting $C_P(\beta,d)$ is of the same order of $ \CLS(\beta,d)$ (see \eqref{eq:CLS_bound}), our result improves the power of the dominant geometric constant from three to two. A further important difference concerns the choice of the inverse temperature, which in our bound is \(d/(2\epsilon)\), up to logarithmic factors. By contrast, their Lemma~14 requires $\epsilon
    \geq
    \frac{2d}{\beta}
    \log\left(
        4\pi_{\beta}e\,\beta Ld\,S
    \right)$. Thus, as stated, their theorem uses a more conservative inverse-temperature calibration than the one made explicit in our Corollary~\ref{cor:main}. This distinction is significant in the non-convex regime, because the constants \(C_P(\beta,d)\) and \(\CLS(\beta,d)\) may depend exponentially on \(\beta\). We emphasize that we do not claim that the constant in their temperature condition is intrinsic to the Lyapunov-potential method. Their condition is a sufficient one, obtained by enforcing a fixed lower bound on \(\pi_\beta(A_\epsilon)\). The point is that our expected-risk formulation does not require such a sublevel-mass condition: it only requires the Gibbs bias to be smaller than the allocated error budget, and therefore makes the sharper \(d/(2\epsilon)\) leading scale explicit.

\subsection{Mini-batch stochastic ULA}\label{app:comparisons_mini}

\paragraph{Raginsky, Rakhlin and Telgarsky (2017) \cite{Raginsky2017}.}
       The bounds already seen in the previous section translate to a total number of gradient evaluations of the single $f(\cdot, \zeta)$ of
    \[k s = O\left(\frac{k}{\delta}\right) = \tilde O \left(\frac{\beta^{13} \CLS(\beta,d)^9}{\epsilon^8}\right).\]
    This is clearly much worse than our estimate in \eqref{eq:total_count_needed_ST}. Even allowing for the more refined analysis discussed above, the number of gradient evaluations of the single $f(\cdot, \zeta)$ remains
    \[k s = O\left(\frac{k}{\delta}\right) = \tilde O \left(\frac{\beta^{9} \CLS(\beta,d)^7}{\epsilon^8}\right),\]
    which is still clearly much worse than our estimate found in \eqref{eq:total_count_needed_ST}.

\paragraph{Xu, Chen, Zou and Gu (2018) \cite{XCZG2018}.} We send the reader to Appendix \ref{app:comparisons_ULA}, where we discussed the impossibility to truly compare the two works. However, \cite{XCZG2018} reports for SGLD a gradient complexity of order
\[
ks = \tilde O\!\left(\frac{d^7}{\lambda(\beta,d)^5 \epsilon^5}\right),
\]
where $\lambda(\beta,d)$ denotes the exponential convergence rate (referred to there as the discrete spectral gap) of the discrete-time Markov chain generated by GLD or SGLD. In \cite[Remark~3.5]{XCZG2018}, the authors state that this quantity is of the same order as Raginsky et al.'s continuous-time spectral gap $\lambda^*(\beta,d)$ (and thus also of $\CLS(\beta,d)$); however, this comparison is not made quantitative in a way that yields explicit dependence on $\beta$ and $d$ for parameter tuning. Still, even under the most favorable interpretation (namely, replacing $\lambda(\beta,d)^{-1}$ by a quantity of the same order as our complexity scale $\CLS(\beta,d)$), one would obtain at best
\[
ks = \tilde O\!\left(\frac{d^7\,\CLS(\beta,d)^5}{\epsilon^5}\right),
\]
which is still worse than our bound in \eqref{eq:total_count_needed_ST}.

\paragraph{Zou, Xu, and Gu (2021) \cite{Zou2021Faster}.} While our analysis requires a small parameter $\delta$, i.e., a large batch-size $s$ (as the already discussed works \cite{Raginsky2017,XCZG2018}), Zou et al. \cite{Zou2021Faster} provide a convergence analysis for SGLD with unbiased estimators that avoids this requirement. By combining \cite[Corollary 4.7]{Zou2021Faster} and \cite[Corollary 4.8]{Zou2021Faster}, they establish rates in Total Variation distance without requiring vanishing steps. However, their complexity bound depends heavily on the Cheeger constant $\rho$. Specifically, they derive a total gradient evaluation complexity of order $ks = \tilde{O}\left(\frac{d^4\beta^2 }{\rho^4\epsilon^2}\right)$. As described also in Appendix \ref{app:comparisons_ULA}, we can translate this into \[ks=\tilde O\!\left(\dfrac{\beta^2d^6\CLS^4}{\epsilon^2}\right).\]
Since $\CLS(\beta, d)$ typically scales exponentially with $d$ and, more importantly, with $\beta$, which we must set as $O(d/\epsilon)$, it is clearly the dominant bottleneck in non-convex optimization. Therefore, our result in \eqref{eq:total_count_needed_ST} (which achieves a dependence of $O(\CLS(\beta, d)^3)$) offers a tighter bound in this regime. Extending our techniques to exploit the unbiasedness of the gradient estimator, as done in \cite{Zou2021Faster}, remains a promising avenue for future research.

\paragraph{Kinoshita and Suzuki (2022) \cite{KinoshitaSuzuki2022}.}
A related analysis is provided by \cite{KinoshitaSuzuki2022}, who study variance-reduced Langevin dynamics. Their setting is structurally different from the plain mini-batch oracle considered here: variance reduction requires a finite-sum objective and periodic full-gradient computations, so the total complexity depends explicitly on the dataset size \(n\). In particular, their finite-sum complexity contains the additional cost of variance reduction, of order \(\tilde O(n+\sqrt n/\epsilon)\). Ignoring this difference in oracle structure, their optimization guarantee leads to a bound of the form
\[
    ks
    =
    \tilde O\left(
        \frac{d\beta^2\CLS(\beta,d)^3}{\epsilon}
    \right).
\]
Thus, at the level of the displayed polynomial factors, their result has a better dependence on \(\epsilon^{-1}\) than Corollary~\ref{cor:main_MB}, while both bounds contain a cubic dependence on the functional-inequality constant. However, this comparison is misleading if one ignores the required inverse temperature. As explained in Appendix~\ref{app:comparisons_ULA}, their analysis requires a more conservative choice of \(\beta\) than ours. Consequently, although their bound has a better dependence on \(\epsilon\), our mini-batch result can be substantially sharper in the metastable non-convex regime because it keeps the Gibbs-temperature calibration optimal. Their bound is superior only in the convex setting, but in the convex setting better bounds are anyway available, see \cite{Dalalyan2017,DurmusMoulines2017,durmus2019W2,Dalalyan2019}.

\paragraph{Chen, Sekhari and Sridharan (2024).}
The stochastic-gradient part of \cite{Chen2024} is particularly relevant for comparison with Corollary~\ref{cor:main_MB}. Their Theorem~8 gives the same displayed oracle complexity for GLD and SGLD, provided the stochastic-gradient oracle is unbiased and uniformly sub-Gaussian and the stochastic losses satisfy sample-wise smoothness/dissipativity assumptions. Under these assumptions, their stochastic-gradient result is sharper than the mini-batch bound obtained in Corollary~\ref{cor:main_MB}, which gives
\[
    ks
    =
    \tilde O\left(
        \frac{\beta^4 d\,\CLS(\beta,d)^3}{\epsilon^4}
    \right).
\]
This difference is expected. Their proof exploits the unbiased stochastic-gradient structure directly, whereas our analysis treats mini-batch gradients through the general inexact-oracle condition $\mathbb E\|g(x,\xi)-\nabla F(x)\|^2
    \leq
    \delta(P\|x\|^2+Q)$. This framework allows biased and state-dependent approximation errors, and is therefore suited to the zeroth-order finite-difference estimators considered in Section~\ref{sec:ZO}. It is not intended to be optimal for the purely unbiased stochastic-gradient case. Nevertheless, we stress again that their stated inverse-temperature calibration is more conservative than ours, and this heavily affects the final bound through the exponential dependence of \(C_P(\beta,d)\).

\subsection{Zeroth order ULA}\label{app:comparisons_ZO}

\paragraph{Roy, Shen, Balasubramanian and Ghadimi (2022) \cite{Roy2022}.}

A particularly relevant comparison is Theorem~3.2 of \cite{Roy2022}, which studies stochastic zeroth-order Langevin under smoothness and a Log-Sobolev inequality. Their convention for the LSI is $\KL(\mu\|\pi_{\beta})
    \leq
    \frac{1}{2\lambda}
    \int_{\mathbb{R}^d}
    \left\|
        \nabla \log \left(\frac{d\mu}{d\pi_{\beta}}\right)
    \right\|^2\,d\mu$, so that, in our notation, $\lambda=\CLS(\beta,d)^{-1}$. Their target is of the form $\pi\propto e^{-f}$, whereas in our optimization setting $\pi_\beta(dx)\propto e^{-\beta F(x)}\,dx$. Thus the correct change of variables is $f=\beta F$, $M_{\rm Roy}=\beta M$,
    $\lambda=\CLS(\beta,d)^{-1}$, and $\gamma_{\rm Roy}=\frac{\gamma}{\beta}$,
where $\gamma_{\rm Roy}$ denotes the Langevin stepsize in \cite{Roy2022}. We keep the notation $h$ for the zeroth-order smoothing radius and $s$ for the number of random directions, corresponding to their $\nu$ and $b$, respectively. Theorem~3.2 of \cite{Roy2022} is a Wasserstein sampling guarantee. In the non-convex regime, where typically $\CLS(\beta,d)>1$, equivalently $\lambda<1$, and tracking carefully the dependence on $M_{\rm Roy}$, which depends on $\beta$, their parameter choices for obtaining $W_2(\mu_k,\pi_\beta)\leq \epsilon_{\mathrm W}$, scale as
\[
    \gamma_{\rm Roy}
    =
    \tilde\Theta\left(
        \frac{\epsilon_{\mathrm W}^2}
        {M_{\rm Roy}^2 d\,\CLS(\beta,d)^2}
    \right),
    \quad
    h
    =
    \tilde\Theta\left(
        \frac{\epsilon_{\mathrm W}}
        {M_{\rm Roy}d^{3/2}\CLS(\beta,d)}
    \right),\quad  s
    =
    \tilde\Theta\left(
        \frac{M_{\rm Roy}^2d^2\CLS(\beta,d)^2}
        {\epsilon_{\mathrm W}^2}
    \right).
\]
Equivalently, since $M_{\rm Roy}=\beta M$, and suppressing fixed powers of $M$ and oracle-noise constants, this gives
\[
    \gamma
    =
    \beta\gamma_{\rm Roy}
    =
    \tilde\Theta\left(
        \frac{\epsilon_{\mathrm W}^2}
        {\beta d\,\CLS(\beta,d)^2}
    \right),
    \quad
    h
    =
    \tilde\Theta\left(
        \frac{\epsilon_{\mathrm W}}
        {\beta d^{3/2}\CLS(\beta,d)}
    \right), \quad s
    =
    \tilde\Theta\left(
        \frac{\beta^2d^2\CLS(\beta,d)^2}
        {\epsilon_{\mathrm W}^2}
    \right).
\]
The resulting number of iterations is $k
    =
    \tilde O\left(
        \frac{\beta^2 d\CLS(\beta,d)^3}
        {\epsilon_{\mathrm W}^2}
    \right),$ and hence the total number of function evaluations is
\[
    ks
    =
    \tilde O\left(
        \frac{\beta^4d^3\CLS(\beta,d)^5}
        {\epsilon_{\mathrm W}^4}
    \right).
\]
Here we have set the function-value noise level of \cite{Roy2022} to zero; keeping it only adds the oracle-noise factor appearing in their theorem. If one uses Theorem~3.2 of \cite{Roy2022} as a black box for optimization, one first obtains the Wasserstein guarantee and then converts it into an objective-value guarantee, for instance using Lemma~6 of \cite{Raginsky2017}. This conversion does not remove the powers of $\CLS(\beta,d)$ already paid in the Wasserstein sampling theorem. Therefore, the black-box route through \cite{Roy2022} inherits a $\CLS(\beta,d)^3$ dependence in the number of iterations and a $\CLS(\beta,d)^5$ dependence in zeroth-order oracle complexity. Notice also that the same phenomenon appears at the level of the smoothing radius. The Wasserstein route of \cite{Roy2022}, after tracking $M_{\rm Roy}=\beta M$, requires
\[
    h
    =
    \tilde\Theta\left(
        \frac{\epsilon_{\mathrm W}}
        {\beta d^{3/2}\CLS(\beta,d)}
    \right),
\]
whereas our Gaussian zeroth-order analysis allows $h=\tilde\Theta\left(
        \frac{\epsilon}
        {\beta d^{3/2}\sqrt{\CLS(\beta,d)}}
    \right)$. Thus, for comparable accuracies, our admissible smoothing radius is larger by a factor $\sqrt{\CLS(\beta,d)}$. This is consistent with the fact that \cite{Roy2022} targets a full Wasserstein approximation, while our final goal is the expected excess risk $\E[F(x_k)]-\min F\leq \epsilon$. By working directly with relative entropy and converting KL control into objective-value error, our zeroth-order bound requires only $k
    =
    \tilde O\left(
        \frac{\beta^2 d\CLS(\beta,d)^2}
        {\epsilon^2}
    \right)$ and $ks
    =
    \tilde O\left(
        \frac{\beta^4d^3\CLS(\beta,d)^3}
        {\epsilon^4}
    \right)$. The improvement should therefore not be interpreted as contradicting \cite{Roy2022}: their theorem gives a stronger distributional guarantee in $W_2$, while ours gives a sharper guarantee for the different objective of expected excess risk. Interestingly, if one inspects the proof of \cite[Theorem~3.2]{Roy2022} and stops at the intermediate entropy estimate, before the final use of Talagrand's inequality, then combining that estimate with our KL-to-objective argument recovers the same leading dependence on $\CLS(\beta,d)$ as our result, providing a useful sanity check.

\paragraph{Comparison with Liu and Wang (2020) \cite{LiuWang2020}.} The work \cite{LiuWang2020} is also motivated by zeroth-order Langevin methods for derivative-free global optimization, but its assumptions make the result difficult to compare with ours. Their main theorem requires
\[
    \frac mM>
    \mathbb E_{u\sim p}[\|u\|\,\|\nabla\log p(u)\|]+1,
\]
where $p$ is the law of the probing direction $u$, $m$ is the dissipativity constant and $M$ is the smoothness constant. However, under the usual smooth dissipative assumptions,
\[
    \langle x,\nabla F(x)\rangle\ge m\|x\|^2-b,
    \qquad
    \|\nabla F(x)-\nabla F(y)\|\le M\|x-y\|,
\]
one necessarily has $m\le M$. Hence the above condition cannot hold. In particular, for their Gaussian probing example it becomes $m>M(d+1)$, which is incompatible with $m\le M$ for every $d\ge1$. For this reason, \cite{LiuWang2020} does not provide a directly applicable benchmark in the standard smooth dissipative regime considered here. Moreover, their main result is a Wasserstein tracking bound to the Gibbs measure, rather than an explicit expected-excess-risk oracle complexity. Our zeroth-order result applies under the usual smoothness and dissipativity assumptions and gives explicit choices of the stepsize, smoothing radius, and number of random directions, leading directly to an expected excess-risk bound.

\section{Experimental Details} \label{app:exp_details}

In this appendix, we provide the implementation and experimental details for the numerical results presented in Section~\ref{sec:experiments}. All scripts were implemented in Python~3.11.15 using PyTorch~2.6.0 \cite{pytorch}, NumPy~2.4.6 \cite{numpy}, and Matplotlib~3.10.9 \cite{matplotlib}. All experiments were conducted on the Grid'5000 testbed. The hardware specifications of the machine used in the experiments are reported in Table~\ref{tab:machine_details}.

\begin{table}[H]
    \centering
    \caption{Hardware specifications of the machine used for the experiments.}
    \label{tab:machine_details}
    \begin{tabular}{ll}
        \toprule
        Feature & Specification \\
        \midrule
        CPU & $64 \times$ Intel(R) Xeon(R) Silver 4215 CPU @ 2.50\,GHz \\
        GPU & $2 \times$ NVIDIA Quadro RTX 6000 \\
        RAM & 256\,GB \\
        \bottomrule
    \end{tabular}
\end{table}
\noindent In the experiments of Section~\ref{sec:experiments}, we investigated how the numerical performance of Algorithm~\ref{alg:ZO-ULA} is affected by its parameters. Specifically, we considered three standard nonconvex benchmark functions: the Ackley function ($F_A$), the Rastrigin function ($F_R$), and the Levy function ($F_L$), defined as follows.

\begin{equation*}
    \begin{aligned}
    F_A(x) &=-20 \exp\left(-0.2\sqrt{\frac{1}{d}\sum_{i=1}^{d}x_i^2}\right)-\exp\left(\frac{1}{d}\sum_{i=1}^{d}\cos(2\pi x_i)\right)+20+e,\\
    F_R(x) &=10d+\sum_{i=1}^{d}\left(x_i^2-10\cos(2\pi x_i)\right),\\
    F_L(x) &=\sin^2\left(\pi\phi(x_1)\right)+\sum_{i=1}^{d-1}(\phi(x_i)-1)^2\left(1+10\sin^2\!\left(\pi\phi(x_i)+1\right)\right)\\
    &+(\phi(x_d)-1)^2\left(1+\sin^2\left(2\pi\phi(x_d)\right)\right),
    \end{aligned}
\end{equation*}
where $\phi(x_i)=1+\frac{x_i-1}{4}$. For each benchmark function, we considered problem dimensions $d \in \{10,50,100\}$ and repeated every experiment five times. In each repetition, Algorithm~\ref{alg:ZO-ULA} was initialized from a point $x_0 \in \mathbb{R}^d$ sampled uniformly at random from the hypercube $[-32.768,32.768]^d$ for the Ackley function, $[-5.12,5.12]^d$ for the Rastrigin function, and $[-10,10]^d$ for the Levy function.\footnote{These hypercubes correspond to the standard evaluation domains commonly adopted for these benchmark functions, see \url{https://www.sfu.ca/~ssurjano/optimization.html}.} The values of the step size $\gamma$, the exploration parameter $\beta$, and the number of sampled directions used to assess the performance of Algorithm~\ref{alg:ZO-ULA} were selected as follows. The parameters $\gamma$ and $\beta$ were chosen from logarithmically spaced grids consisting of $10$ values each, with $\gamma \in [10^{-7},1]$ and $\beta \in [10^{-2},10^{3}]$. The number of sampled directions $s$ was selected from the set $\{1,\lceil d/5\rceil,\lceil d/3\rceil,\lceil d/2\rceil,\lceil 2d/3\rceil,d\}$. The directions used to construct the finite-difference surrogates were sampled independently and uniformly from the unit sphere.

\end{document}

%% file: MacroBook.tex
\def\argmin{\operatornamewithlimits{arg\,min}}

\DeclareMathOperator{\E}{\mathbb{E}}

\newtheorem{theorem}{Theorem}[section]
\newtheorem{lemma}[theorem]{Lemma}
\newtheorem{proposition}[theorem]{Proposition}
\newtheorem{corollary}[theorem]{Corollary}
\newtheorem{definition}[theorem]{Definition}
\newtheorem{remark}[theorem]{Remark}

\newtheorem{assumption}{Assumption}[section]

\providecommand{\supp}[1]{\text{supp}( {#1} )}

\newcommand{\R}{\mathbb R}

\newcommand{\N}{\mathbb N}